\newcommand{\Hc}{H}
\newtheorem*{mainthm}{Theorem A}
\newtheorem*{dense-cor}{Theorem B}
\newtheorem*{code-thm}{Theorem C}
\newtheorem{prop}{Proposition}[section]
\newtheorem{lemma}[prop]{Lemma}
\newtheorem{thm}[prop]{Theorem}
\newtheorem{cor}[prop]{Corollary}
\theoremstyle{definition}
\theoremstyle{remark}
\newtheorem{remark}[prop]{Remark}
\numberwithin{equation}{section}
\begin{document}

\author{Mao Shinoda, Hiroki Takahasi, Kenichiro Yamamoto}

\address{Department of Mathematics,
Ochanomizu University, 2-1-1 Otsuka, Bunkyo-ku, Tokyo, 112-8610, JAPAN} 
\email{shinoda.mao@ocha.ac.jp}

\address{Department of Mathematics,
Keio University, Yokohama,
223-8522, JAPAN} 
\email{hiroki@math.keio.ac.jp}

\address{Department of General Education, Nagaoka University of Technology, Nagaoka 940-2188, JAPAN}
\email{k\_yamamoto@vos.nagaokaut.ac.jp}
\subjclass[2020]
{37B10, 37D35}
\thanks{{\it Keywords}: ergodic optimization; subshift; thermodynamic formalism; specification}

\thanks{}

\date{}

 \title[Ergodic optimization for continuous functions]
 {
 Ergodic optimization 
 for continuous functions on non-Markov shifts}

 \maketitle

\begin{abstract}
Ergodic optimization aims to describe dynamically invariant probability measures that maximize the integral of a given function. For a wide class of intrinsically ergodic subshifts  over a finite alphabet, we show that
the space of continuous functions on the shift space splits into two subsets: one is a dense $G_\delta$ set for which all maximizing measures have `relatively small' entropy; the other 
is contained in the closure of the set of functions having uncountably many, fully supported ergodic measures with `relatively large' entropy. 
This result considerably generalizes and unifies the results of  
Morris (2010) and
Shinoda (2018),  
and applies to a wide class of intrinsically ergodic
non-Markov symbolic dynamics 
without Bowen's specification property, including 
any transitive piecewise monotonic interval map, some coded shifts
and multidimensional $\beta$-transformations.
Along with these examples of application, we provide an example of an 
 intrinsically ergodic subshift with positive obstruction entropy to specification.
\end{abstract}


 \section{Introduction}
 Ergodic optimization aims to describe 
 properties of dynamically invariant
maximizing measures. In its most basic form, main constituent components are:
a continuous map $T$ of a compact metric space $X$; 
the space $M(X,T)$ of $T$-invariant Borel probability measures endowed with the weak* topology;
a continuous function $f\colon X\to\mathbb R$. Elements of $M(X,T)$ that attain the supremum \begin{equation}\label{alphaT}\Lambda_T(f)=\sup\left\{\int f{\rm d}\mu\colon\mu\in M(X,T)\right\}\end{equation}
are called 
{\it $f$-maximizing measures}.
The set of $f$-maximizing measures, denoted by $M_{\rm max}(f)$, is non-empty and contains ergodic measures.
For a given dynamical system $(X,T)$ and a Banach space of real-valued functions on $X$, we aim to establish properties of elements of $M_{\rm max}(f)$
for a `typical' function $f$ in the space.

Earlier results in ergodic optimization, especially those by Hunt and Ott \cite{HT1,HT2} and Bousch \cite{Bou00}, suggested the possibility of maximizing measures for typical functions being unique and supported on periodic orbits. It has been conjectured that for typical dynamical systems 
and typical 
functions in suitable Banach spaces, the maximizing measure 
is unique and supported on a periodic orbit. For a more precise statement known as the TPO (typically periodic optimization) conjecture, see \cite[Conjecture~4.11]{Jen06},  \cite[Conjecture~7.3]{Jen19} and  \cite[Conjecture~1]{YH99}. For results toward a resolution of this conjecture in a variety of settings, see \cite{Boc18,Jen06,Jen19} and the references therein.

In ergodic optimization, 
the regularity of functions is crucial. For $(X,T)$ with some 
expanding or hyperbolic behavior and a H\"older continuous $f$,
the Ma\~n\'e-Conze-Guivarc'h lemma 
characterizes $f$-maximizing measures via their supports \cite{Bou00,Bou01,CLT01,Mor09}.
The analysis of functions in the space 
$C(X)$ of real-valued continuous functions on $X$ endowed with the supremum norm $\|\cdot\|_{C^0}$ is completely different: the Ma\~n\'e-Conze-Guivarc'h lemma is not valid, but duality arguments are available.

Recall that dense $G_\delta$ sets are countable intersections of open dense subsets, and a property that holds for a dense $G_\delta$ set is said to be generic. 
For an expanding map of the circle,
Bousch and Jenkinson \cite{BouJen02} proved that the maximizing measure is unique and fully supported (charging any non-empty open set) for generic continuous functions. Br\'emont \cite{Bre08} proved that if $(X,T)$ has the property that 
measures supported on periodic orbits (closed orbit measures) are dense in $M(X,T)$ and the entropy function is upper semicontinuous, then the maximizing measure is unique and has zero entropy for generic continuous functions.
For expanding maps of the circle,  Bowen's specification property \cite{Bow71} holds
and so closed orbit measures are dense \cite{Sig70}. Moreover the entropy function is upper semicontinuous. Hence, the result of
Br\'emont \cite{Bre08}  applies 
together with that of Bousch and Jenkinson \cite{BouJen02}: the maximizing measure is unique, fully supported. has zero entropy, and is not strongly mixing for generic continuous functions.

Morris \cite{Mor10} unified the result of Bousch and Jenkinson \cite{BouJen02} and that of Br\'emont \cite{Bre08}, proving that for $(X,T)$ with Bowen's specification property, the maximizing measure is unique, fully supported, has zero entropy and is not strongly mixing for  generic continuous functions \cite[Corollary~2]{Mor10}. 
In contrast to Morris's result, 
Shinoda \cite[Theorem~A]{Shi18} proved that 
for a dense set of continuous functions on
a topologically mixing Markov shift,
 there exist uncountably many, fully supported ergodic maximizing measures with positive entropy, which are actually Bernoulli. For an analogous result on expanding Markov interval maps with holes, see \cite{ShiTak20}.



\subsection{Statements of the results}
Our main result considerably generalizes, and unifies 
Morris's result \cite[Corollary~2]{Mor10} and 
Shinoda's pathological one \cite[Theorem~A]{Shi18} concerning entropies and supports of maximizing measures for continuous functions, in the context of symbolic dynamics to include a wide class of non-Markov shifts
without Bowen's specification property.
  For a subshift $\Sigma$, let $h_{\rm top}(\Sigma)$ denote its topological entropy, and 
    for a shift-invariant Borel probability measure $\mu$ on $\Sigma$ let $h(\mu)$ denote its measure-theoretic entropy.
    Let $h_{\rm spec}^\bot(\Sigma)$
    denote {\it the obstruction entropy to specification} on $\Sigma$.
  Our main result is stated as follows.

\begin{mainthm}
Let $\Sigma$ be a subshift that satisfies $h_{\rm spec}^\bot(\Sigma)<h_{\rm top}(\Sigma)$.
If $h_{\rm spec}^\bot(\Sigma)\leq H<h_{\rm top}(\Sigma)$ then the following statements hold:
\begin{itemize}
\item[(a)] The set
$\mathscr{R}_{H }=\{f\in C(\Sigma)\colon h(\mu)\leq H
\text{ for all $\mu\in M_{\rm max}(f)$}\}$
is dense $G_\delta$. 
\item[(b)] For any $f\in C(\Sigma)\setminus \mathscr{R}_H$
 and any neighborhood $U$ of $f$ in $C(\Sigma)$, there exists $g\in U$ 
 such that the set 
 $\{\mu\in M_{\rm max}(g)\colon h(\mu)>H\}$
contains uncountably many, fully supported ergodic measures.
\end{itemize}
\end{mainthm}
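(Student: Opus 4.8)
The plan is to treat the two parts by rather different methods: part (a) by a soft upper-semicontinuity argument combined with one explicit perturbation concentrated away from a periodic orbit, and part (b) by a symbolic block-coding construction coupled to a second, countably supported perturbation. Throughout, the hypothesis $h_{\rm spec}^\bot(\Sigma)<h_{\rm top}(\Sigma)$ enters only through two structural facts about such $\Sigma$: periodic measures are dense in $M(\Sigma,\sigma)$, and orbit segments of sufficiently high complexity may be concatenated with bounded gaps (``specification above the obstruction entropy'').

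For (a), note that $\Sigma$ is expansive, so the entropy map is upper semicontinuous and $E(f):=\max\{h(\mu)\colon\mu\in M_{\rm max}(f)\}$ is attained; moreover $E$ is upper semicontinuous on $C(\Sigma)$ (if $f_n\to f$ and $\mu_n\in M_{\rm max}(f_n)$ realizes $E(f_n)$, then along a subsequence $\mu_n\to\mu$ with $\int f\,{\rm d}\mu=\lim\int f_n\,{\rm d}\mu_n=\lim\Lambda_T(f_n)=\Lambda_T(f)$, so $\mu\in M_{\rm max}(f)$, and $h(\mu)\ge\limsup h(\mu_n)$). Hence $\mathscr R_H=\{E\le H\}=\bigcap_{n\ge1}\{E<H+1/n\}$ is $G_\delta$. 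For density (assuming $H>0$; the case $H=0$, possible only when $h_{\rm spec}^\bot(\Sigma)=0$, is the zero-entropy situation already covered by the methods of Bousch--Jenkinson and Morris), given $f$ and $\varepsilon>0$ choose, by density of periodic measures, a periodic orbit $\mathcal O$ of period $p$ with $\int f\,{\rm d}\mu_{\mathcal O}>\Lambda_T(f)-\eta$, where $\eta>0$ is fixed later. Let $W_\ell$ be the set of length-$\ell$ words occurring in $\mathcal O$, put $\phi_\ell=-\mathbf 1_{\{x\,:\,x_0\cdots x_{\ell-1}\notin W_\ell\}}\in C(\Sigma)$, and set $g=f+\tfrac\varepsilon2\phi_\ell$, so $\|g-f\|_{C^0}<\varepsilon$. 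Since $\int g\,{\rm d}\mu_{\mathcal O}=\int f\,{\rm d}\mu_{\mathcal O}>\Lambda_T(f)-\eta$, every $g$-maximizing measure $\nu$ satisfies $\nu(\{x_0\cdots x_{\ell-1}\notin W_\ell\})<2\eta/\varepsilon$; since for $\ell\ge p+1$ any run of consecutive length-$\ell$ windows all lying in $W_\ell$ must follow $\mathcal O$, a Hamming-ball count bounds the topological entropy of the set of points that visit $\{x_0\cdots x_{\ell-1}\notin W_\ell\}$ with frequency below $2\eta/\varepsilon$ by a quantity tending to $0$ as $\eta\to0$; thus $h(\nu)<H$ once $\eta$ is small enough, i.e. $g\in\mathscr R_H$.

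For (b), fix $f\in C(\Sigma)\setminus\mathscr R_H$ and a neighbourhood $U$ of $f$; by the ergodic decomposition and affinity of entropy, $M_{\rm max}(f)$ contains an ergodic $\mu^*$ with $h(\mu^*)>H\ge h_{\rm spec}^\bot(\Sigma)$. The first step is to build an injective family $\{\nu_t\}_{t\in\{0,1\}^{\mathbb N}}$ of fully supported ergodic measures with $h(\nu_t)>H$ and $\int f\,{\rm d}\nu_t\in[\Lambda_T(f)-\eta,\Lambda_T(f)]$, by taking points generic for $\nu_t$ to be concatenations, over a sequence of stages, of long $\mu^*$-typical words (of which there are about $e^{nh(\mu^*)}$, carrying both the entropy above $H$ and the near-optimal $f$-average), of a fixed sparse enumeration of every word of the language (forcing full support), and of bounded connecting words; at stage $k$ the bit $t_k$ only toggles whether a marker word $u_k$ is inserted, at a controlled positive frequency, inside the stage-$k$ blocks. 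Distinct $t$ then give distinct $\nu_t$, while the cylinder statistics, and in particular the deficits $\Lambda_T(f)-\int f\,{\rm d}\nu_t$, depend on $t$ additively over the stages and can be made as small, and as summable across stages, as desired. The second step is to perturb $f$ to $g=f+\phi$ with $\phi=\sum_k\lambda_k\mathbf 1_{[u_k]}$ a uniformly convergent series of marker indicators whose coefficients solve the one-equation-per-stage linear system forcing $t\mapsto\int g\,{\rm d}\nu_t$ to be constant; if that constant equals $\Lambda_T(g)$, then $\|\phi\|_{C^0}<\varepsilon$, $g\in U$, and $\{\nu_t\}\subseteq M_{\rm max}(g)$, so $\{\mu\in M_{\rm max}(g)\colon h(\mu)>H\}$ contains the uncountable family of fully supported ergodic measures $\{\nu_t\}$.

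I expect the crux to be exactly the conditional clause just made: ensuring the common value $\int g\,{\rm d}\nu_t$ equals $\Lambda_T(g)$, that is, that the small perturbation $\phi$ does not raise the supremum elsewhere on $M(\Sigma)$. Full support of the $\nu_t$ forbids the easy device of rewarding a proper subshift that contains $\mu^*$; and one cannot instead arrange $\int f\,{\rm d}\nu_t=\Lambda_T(f)$ for all $t$, since when $M_{\rm max}(f)=\{\mu^*\}$ that would force $\nu_t=\mu^*$ for every $t$. Hence the values $\int f\,{\rm d}\nu_t$ must genuinely spread out below $\Lambda_T(f)$, and $\phi$ must lift them to a common value while remaining controlled on, and in particular not over-rewarding, $\mu^*$ and every other near-maximizing measure not built into the family --- all with $C^0$-norm below $\varepsilon$. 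Making the marker data of the $\nu_t$ accurate enough for such a $\phi$ to exist, and checking that $h_{\rm spec}^\bot(\Sigma)<h(\mu^*)$ really does supply enough concatenation to keep the $\nu_t$ simultaneously fully supported and of entropy strictly above $H$, is where the substantive work lies; the semicontinuity bookkeeping of (a) and the counting estimates are by comparison routine.
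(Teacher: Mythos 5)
There are two genuine gaps, one in each part. For (a), your density argument rests on the premise that periodic measures are dense in $M(\Sigma,\sigma)$ (or at least that some periodic orbit nearly realizes $\Lambda_\sigma(f)$ for every $f$), and this does \emph{not} follow from $h_{\rm spec}^\bot(\Sigma)<h_{\rm top}(\Sigma)$: the hypothesis gives no control whatsoever over the part of the system carried by measures of entropy $\leq h_{\rm spec}^\bot(\Sigma)$, and the paper's own examples (the coded shifts of Kucherenko et al.\ in \S4.2.2) satisfy the hypothesis while ergodic --- a fortiori periodic --- measures fail to be dense in $M(\Sigma,\sigma)$. If the $f$-maximizing measures happen to be such unapproachable low-entropy measures, your choice of $\mathcal O$ with $\int f\,{\rm d}\mu_{\mathcal O}>\Lambda_\sigma(f)-\eta$ may simply not exist. (Your deferral of the case $H=0$ to ``the methods of Bousch--Jenkinson and Morris'' is also circular, since those require Bowen's specification, precisely what is being dispensed with.) The way around this, which is what the paper does, is to notice that only ergodic measures with $h(\mu)>H$ ever need to be approximated: for those, the decomposition $\mathcal L(\Sigma)=\mathcal C^p\mathcal G\mathcal C^s$ with $h(\mathcal C^p\cup\mathcal C^s)\leq H<h(\mu)$ supplies an abundance of core words, and concatenating a single such word with bounded gaps yields nearby ergodic measures of arbitrarily small entropy (Proposition~\ref{hyp-lem2}(b)); density of the open sets $O_n$ in $\overline{M^{\rm e}(\Sigma,\sigma)}$ is then converted into density in $C(\Sigma)$ by Morris's abstract Theorem~1.1, which needs no specification and no periodic orbits at all.

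For (b), the step you yourself flag as the crux --- arranging that the equalizing perturbation $\phi$ does not raise the maximum elsewhere, so that the common value of $\int g\,{\rm d}\nu_t$ really equals $\Lambda_\sigma(g)$ --- is left unresolved, so the proposal is a strategy rather than a proof; and in addition your device for full support (inserting ``a fixed sparse enumeration of every word of the language'' with bounded connecting gaps) is not available under the hypothesis, since (W)-specification only lets you glue words of $\mathcal G^M$ for a fixed $M$, not arbitrary words of $\mathcal L(\Sigma)$, whose prefixes and suffixes are uncontrolled. The paper circumvents both difficulties by duality instead of explicit perturbation: from an ergodic $\lambda\in M_{\rm max}(f_0)$ with $h(\lambda)>H$ it builds (Proposition~\ref{hyp-lem2}(a), via equilibrium states $\mu_{\beta f}$ of a Lipschitz potential vanishing on an auxiliary subshift and the zero-temperature limit) a homeomorphic path of ergodic measures of entropy $>H$ inside $\{\nu\colon\int f_0{\rm d}\nu\geq\Lambda_\sigma(f_0)-\varepsilon^2\}$, whose full support comes from the weak Gibbs property in Theorem~\ref{CT-thm} rather than from inserting language words; it then takes the barycenter $\mu_0$ of this path and applies Israel's Theorem~V.1.1 together with Br\'emont's characterization of tangent functionals and the barycenter isometry to produce $g$ near $f_0$ whose maximizing set contains uncountably many measures from the path. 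No function equalizing the integrals is ever constructed, which is exactly how the obstruction you identified is avoided; to make your route work you would have to supply precisely such an argument, and that is the hard part.
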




The condition $h_{\rm spec}^\bot(\Sigma)<h_{\rm top}(\Sigma)$, together with the obstruction entropy to specification, was introduced by Climenhaga and Thompson in the series of papers \cite{CT12,CT13,CT14} on the thermodynamic formalism for dynamical systems with non-uniform specification. It implies that for any $H\in[h_{\rm spec}^\bot(\Sigma),h_{\rm top}(\Sigma))$ the language $\mathcal L(\Sigma)$ of the shift space admits a decomposition $\mathcal L(\Sigma)=\mathcal C^p\mathcal G\mathcal C^s$ into a prefix $\mathcal C^p$, a `good' core $\mathcal G$ and a suffix $\mathcal C^s$, such that on a `fattened' $\mathcal G$ there is a specification, and the exponential growth rate of the cardinality of $\mathcal C^p\cup\mathcal C^s$ per word length does not exceed $H$.
See $\S\ref{TF}$ for the formal definition.
This 
implies the intrinsic ergodicity of $\Sigma$ \cite[Theorem~C]{CT13}, i.e., the uniqueness of the measure of maximal entropy.

As a consequence of the variational principle and the result of Jenkinson \cite[Theorem~3.7]{Jen06}, the set $C(\Sigma)\setminus\mathscr{R}_{H }$ is non-empty.
It is a challenging problem to give a concrete example 
of a continuous function 
for which there exist uncountably many maximizing measures that are fully supported and have positive entropy. 
Trivial examples of 
with 
this property are 
functions
that are homologous to a constant. The Ma\~n\'e-Conze-Guivarc'h lemma implies that any continuous function
that is not homologous to a constant and
has a fully supported maximizing measure cannot be H\"older continuous.


Continuous functions like $g$ in (b) are somewhat pathological. The theorem below provides a sufficient condition for the density of such functions.
Let $\Sigma$ be a subshift and let $\sigma$ denote the left shift acting on $\Sigma$.
We say {\it ergodic measures on $\Sigma$ are entropy dense} if 
for any non-ergodic measure $\mu\in M(\Sigma,\sigma)$, any $\varepsilon>0$ and any neighborhood $U$ of $\mu$ in $M(\Sigma,\sigma)$, there exists an ergodic measure $\nu\in U$ such that $h(\nu)>h(\mu)-\varepsilon$. 
Clearly, the entropy density implies the density of ergodic measures.


\begin{dense-cor}\label{dense-cor}
Let $\Sigma$ be a subshift with positive topological entropy that satisfies $h_{\rm spec}^\bot(\Sigma)=0$.
Suppose that ergodic measures on $\Sigma$ are entropy dense.
Then, there exists a dense subset $\mathscr{D}$ of $C(\Sigma)$
such that for any $f\in \mathscr{D}$,
 $M_{\rm max}(f)$
contains uncountably many fully supported ergodic measures with positive entropy.
\end{dense-cor}

There is a wealth of examples of symbolic dynamics to which Theorem~A or Theorem~B applies:
 piecewise monotonic maps \cite{H2}, some coded shifts including
$S$-gap shifts \cite{LM}, multidimensional $\beta$-transformations \cite{Buz97,Buz05}, and so on.
See $\S4$ for more details on these applications.

We will provide two examples of intrinsically ergodic subshifts to which Theorem~A applies but Theorem~B does not.
One is a subshift without the entropy density (see $\S\ref{coded-sec}$). The other example is a subshift with positive obstruction entropy to specification, which is stated as follows.

\begin{code-thm}\label{code-thm}
For any integer $N\geq3$,
there exists a subshift $\Sigma$ on $N$ symbols such that $0<h_{\rm spec}^\bot(\Sigma)<h_{\rm top}(\Sigma)$. 
\end{code-thm}

Theorem~C has an independent interest.
For any transitive Markov shift (hence intrinsically ergodic), the obstruction entropy to specification is $0$.
The converse is not true:
There is a number of examples of intrinsically ergodic non-Markov shifts for which the obstruction entropy to specification is $0$. 
Intrinsically ergodic subshifts with positive obstruction entropy to specification have received little attention and should be investigated further.
Under the hypothesis of Theorem~A,
measures whose entropy do not exceed the obstruction entropy to specification play no role in search of measures of maximal entropy, but they do play roles in problems concerning rare events not captured by measures of maximal entropy,
such as large deviations and multifractal analysis. 

\subsection{Structure of the paper} 
For proofs of (a) and (b) in Theorem~A, we develop 
ideas of Morris \cite[Theorem~1.1, Corollary~2]{Mor10} and Shinoda \cite[Theorem~A]{Shi18} respectively, both related to approximations of ergodic measures.
On the one hand, a key observation is that
 the assumption of Bowen's specification property in \cite[Corollary~2]{Mor10} can actually be weakened to 
 a property that any ergodic measure can be approximated by another with arbitrarily small entropy.
 On the other hand, a key ingredient in the proof of \cite[Theorem~A]{Shi18} is the construction of paths of ergodic measures of high complexity in arbitrarily small neighborhoods of maximizing measures. This construction relies on Sigmund's result \cite{Sig77} that is not valid for non-Markov shifts. 

 The rest of this paper consists of four sections.
The most critical component is Proposition~\ref{hyp-lem2} 
that provides two different kinds of approximations of ergodic measures in the weak* topology.
The proof of Proposition~\ref{hyp-lem2}
relies on the thermodynamic formalism for subshifts with non-uniform specification \cite{CT13} and the zero temperature limit \cite{BLL} that we recall in $\S2$.
     In $\S3$ we combine results in $\S2$ with part of the proof of \cite[Theorem~1.1]{Mor10} and a duality argument in \cite{Isr79} to complete the proofs of Theorem~A and Theorem~B. In $\S4$ we provide examples of application of these two theorems:
any transitive piecewise monotonic interval map; some coded shifts including $S$-gap shifts;
some multidimensional $\beta$-transformations.
          In $\S5$ we prove Theorem~C by constructing concrete examples.

\section{Preliminaries}
This section contains 
preliminary results needed for the proof of Theorem~A. In $\S\ref{nice}$ we begin by introducing basic terminologies in symbolic dynamics. In $\S$\ref{TF} we recall the result of Climenhaga and Thompson \cite{CT13} on the thermodynamic formalism for subshifts with non-uniform specification. In $\S$\ref{obst-sec} we introduce the obstruction entropy to specification, and
in $\S$\ref{zero-t} recall the notion of ground states in zero temperature limit.
In $\S$\ref{construct-sec} we state and prove Proposition~\ref{hyp-lem2} that is a key ingredient in the proofs of Theorems~A and B.

\subsection{Subshift, word, language}\label{nice}
Let $A$ be a non-empty finite discrete set.
A finite string $w=w_1w_2\cdots w_n$ of elements of $A$ is called a {\it word} of length $n$ in $A$. 
For convenience, we introduce an empty word $\emptyset$ by the rules
  $\emptyset w=w\emptyset=w$ for any word $w$ in $A$ and $\emptyset\emptyset=\emptyset$.
  The word length of $w\in A$ is denoted by $|w|$, and the word length of the empty word is set to be $0$. 
  
Given a collection $\mathcal L$ of  words in $A$ and the empty word,
we consider its decompositions: collections of words $\mathcal C^p$, $\mathcal G$, $\mathcal C^s\subset\mathcal L$ such that
$\mathcal C^p\mathcal G\mathcal C^s=\mathcal L$. This means that every word in $\mathcal L$
can be written as a concatenation of a prefix (from $\mathcal C^p$), a `good' core (from $\mathcal G$) and a suffix (from $\mathcal C^s$). Given such a decomposition, we consider for every $M\in\mathbb N$ the following `fattened' set of good words:
\begin{equation}\label{GM-def}\mathcal G^M=\{uvw\in\mathcal L\colon u\in\mathcal C^p, v\in\mathcal G, w\in\mathcal C^s,\ |u|,|w|\leq M\}.\end{equation}
Note that $\mathcal L=\bigcup_{M\in\mathbb N}\mathcal G^M$. This gives a filtration of the language.

Let $A^{\mathbb N}$ and $A^{\mathbb Z}$ denote 
the one-sided, and the two-sided Cartesian product topological spaces of $A$ respectively.
The topologies of these two spaces are metrizable with the Hamming metric $d$: for distinct points $x=(x_i)_i$, $y=(y_i)_i$,
$d(x,y)=2^{-\min\{|i|\colon x_i\neq y_i\}}.$
  The left shift acts continuously on these spaces.
  A shift-invariant closed subset of $A^{\mathbb N}$
  or $A^{\mathbb Z}$
  is called a {\it subshift} over $A$, or a subshift on $\#A$ symbols.
  A {\it language} of a subshift $\Sigma$,
denoted by $\mathcal L(\Sigma)$, is the collection of the empty word $\emptyset$ and words in $A$ that appear in some elements of $\Sigma$. 
 For a subshift $\Sigma$ and $n\geq0$, let $\mathcal L_n(\Sigma)$ denote the collection of elements of $\mathcal L(\Sigma)$ with word length $n$. Note that $\mathcal L_0(\Sigma)=\{\emptyset\}$.
 For a collection $\mathcal D\subset\mathcal L(\Sigma)$ and $n\geq0$,
put
\[\mathcal D_{n}=\mathcal D\cap\mathcal L_n(\Sigma).\]
 Note that $\mathcal D_0=\{\emptyset\}$.
Let $\sigma$ denote the left shift on $\Sigma$.

\subsection{Thermodynamic formalism}\label{TF}
Let $\Sigma$ be a subshift.
For $n\in\mathbb N$ and $w=w_1\cdots w_n\in \mathcal L_n(\Sigma)$, define the $n$-cylinder
 \[[w]=\{(x_i)_{i}\in\Sigma\colon x_i=w_{i}\text{ for }1\leq i\leq n\}.\]
Let $\phi\in C(\Sigma)$ and let 
 $\mathcal D\subset \mathcal L(\Sigma)$. For $n\in\mathbb N$, define
\[\Lambda_n(\mathcal D,\phi)=\sum_{w\in \mathcal D_n }\sup_{[w]}\exp S_n\phi,\]
where $S_n\phi=\sum_{i=0}^{n-1}\phi\circ\sigma^i,$ and put
\[h(\mathcal D,\phi)=\limsup_{n\to\infty}\frac{1}{n}\log\Lambda_n(\mathcal D,\phi).\]
In the case $\phi\equiv0$, let us abbreviate 
$h(\mathcal D)=h(\mathcal D,\phi)$. 
Note that $h(\mathcal L(\Sigma))$ equals $h_{\rm top}(\Sigma)$. The quantity $h(\mathcal L(\Sigma),\phi)$ is called the topological pressure, and denoted by $P(\phi)$.
The variational principle asserts that \[P(\phi)=\sup\left\{h(\mu)+\int\phi{\rm d}\mu\colon\mu\in M(\Sigma,\sigma)\right\}.\]
Elements of $M(\Sigma,\sigma)$ that attain this supremum are called {\it equilibrium states} for the potential $\phi$. Since $(\Sigma,\sigma)$ is expansive, the entropy function $\mu\in M(\Sigma,\sigma)\mapsto h(\mu)$ is upper semicontinuous. So, (the minus of) the free energy
$\mu\in M(\Sigma,\sigma)\mapsto h(\mu)+\int\phi{\rm d}\mu$ is upper semicontinuous and 
equilibrium states always exist.
The uniqueness is an issue.

Bowen's specification property allows us to glue pieces of orbits together to form one orbit.
This property can be used
to show the uniqueness of equilibrium states.
Since Bowen's specification property is too stringent
for non-Markov shifts, 
a number of weaker condition have been introduced and their consequences have been investigated
(see \cite{KLO16} and the references therein). 
Climenhaga and Thompson \cite{CT13} introduced the following condition, and under it proved that the measure of maximal entropy is unique, and satisfies a certain Gibbs property. 
We say a collection of words $\mathcal G\subset\mathcal L(\Sigma)$ has 
  {\it (W)-specification} if there exists an integer $t\geq0$, called a {\it gap size}, such that
 for all $m\geq2$ and all $v^{(1)},\ldots,v^{(m)}\in\mathcal G$, there exist $w^{(1)},\ldots,w^{(m-1)}\in\mathcal L(\Sigma)$ such that $v^{(1)}w^{(1)}v^{(2)}w^{(2)}\cdots w^{(m-1)}v^{(m)}\in\mathcal L(\Sigma)$ and $|w^{(i)}|\leq t$ for all $i\in\{1,\ldots,m-1\}$.

\begin{thm}
[\cite{CT13}, Theorem~C]\label{CT-thm}
Let $\Sigma$ be a subshift and let $\phi\in C(\Sigma)$
be H\"older continuous. Suppose there exist collections of words $\mathcal C^p$, $\mathcal G$, $\mathcal C^s\subset\mathcal L(\Sigma)$ such that $\mathcal L(\Sigma)=\mathcal C^p\mathcal G\mathcal C^s$, and the following two conditions hold:

\begin{itemize}
\item[(a)] $\mathcal G^M$ has (W)-specification for all $M\in\mathbb N$.

\item[(b)] The collections $\mathcal C^p$, $\mathcal C^s$ satisfy \[\sum_{n=1}^\infty\Lambda_n(\mathcal C^p\cup\mathcal C^s,\phi)e^{-P(\phi)n}<\infty.\]
\end{itemize}
Then the equilibrium state for the potential $\phi$ is unique, and has the weak Gibbs property: there exists $K>0$, and for each $M\in\mathbb N$ there exists $K_M>0$ such that for all $n\in\mathbb N$ and all $w\in\mathcal G^M\cap\mathcal L_n(\Sigma)$ we have
\begin{equation}\label{w-Gibbs}K_M\leq\frac{\mu_\phi([w])}{\exp\left( -P(\phi)n+\sup_{[w]}S_n\phi \right)}\leq K.\end{equation}
\end{thm}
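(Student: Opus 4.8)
This is the cited theorem \cite[Theorem~C]{CT13}, and I would reproduce the Climenhaga--Thompson argument. Since $(\Sigma,\sigma)$ is expansive and $\phi$ is H\"older continuous, $\phi$ has the Bowen property: there is a constant $V<\infty$ with $\sup_{[w]}S_n\phi-\inf_{[w]}S_n\phi\le V$ for every $n\in\mathbb N$ and $w\in\mathcal L_n(\Sigma)$. Equilibrium states exist by upper semicontinuity of the entropy, so it suffices to exhibit one equilibrium state $\mu_\phi$ satisfying \eqref{w-Gibbs} and to show that every ergodic equilibrium state coincides with it. The first step is a pressure estimate: from $\mathcal L(\Sigma)=\mathcal C^p\mathcal G\mathcal C^s$ and the Bowen property one has $\Lambda_n(\mathcal L(\Sigma),\phi)\le e^{2V}\sum_{i+j+k=n}\Lambda_i(\mathcal C^p,\phi)\Lambda_j(\mathcal G,\phi)\Lambda_k(\mathcal C^s,\phi)$, and a routine subadditivity argument fed with condition~(b) (which makes the $\mathcal C^p$ and $\mathcal C^s$ contributions exponentially subdominant) forces $h(\mathcal G,\phi)=P(\phi)$; since $\mathcal G\subset\mathcal G^M\subset\mathcal L(\Sigma)$, this gives $h(\mathcal G^M,\phi)=P(\phi)$ for every $M\in\mathbb N$.

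Next I would construct $\mu_\phi$. Fix $M$ and let $t$ be the gap size of $\mathcal G^M$ from condition~(a). For each $n$ choose $x_w\in[w]$ with $S_n\phi(x_w)\ge\sup_{[w]}S_n\phi-1$ for each $w\in\mathcal G^M_n$, set $\nu_n=\Lambda_n(\mathcal G^M,\phi)^{-1}\sum_{w\in\mathcal G^M_n}e^{S_n\phi(x_w)}\delta_{x_w}$ and $\mu_n=\frac1n\sum_{i=0}^{n-1}\sigma^i_*\nu_n$, and let $\mu_\phi$ be a weak* accumulation point of $(\mu_n)$ along a subsequence realizing the growth rate $P(\phi)$ of $\Lambda_n(\mathcal G^M,\phi)$. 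The usual entropy estimate from the lower bound of the variational principle then gives $h(\mu_\phi)+\int\phi\,{\rm d}\mu_\phi\ge P(\phi)$, so $\mu_\phi$ is an equilibrium state. The lower bound in \eqref{w-Gibbs} is where (W)-specification enters: given $w\in\mathcal G^M_n$, concatenate a prescribed copy of $w$ with many words from $\mathcal G^M$ using glue words of length $\le t$, count the resulting words of a long length $m$, and divide by $\Lambda_m(\mathcal G^M,\phi)$; this yields $\mu_m([w])\ge K_M\exp(-P(\phi)n+\sup_{[w]}S_n\phi)$ with $K_M$ depending on $t$ and $V$, hence the same bound for $\mu_\phi$. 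The upper bound, valid for all $w\in\mathcal L_n(\Sigma)$, comes from splitting $S_m\phi$ at an occurrence of $w$, estimating the two remaining Birkhoff sums by $e^{2V}\Lambda_i(\mathcal L(\Sigma),\phi)\Lambda_{m-i-n}(\mathcal L(\Sigma),\phi)\le Ce^{P(\phi)(m-n)}$, and dividing by $\Lambda_m(\mathcal G^M,\phi)$; here one uses that $\Lambda_n(\mathcal G^M,\phi)$ is comparable to $e^{P(\phi)n}$ up to constants, which follows from a submultiplicativity argument exploiting (W)-specification.

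Finally, uniqueness, which I expect to be the main obstacle. One first observes that the weak Gibbs bounds \eqref{w-Gibbs} make $\mu_\phi$ ergodic (a Hopf-type argument on $\mathcal G^M$-cylinders). Let $\nu$ be any ergodic equilibrium state; by Shannon--McMillan--Breiman and Birkhoff, $\frac1n\log\nu([x_1\cdots x_n])\to\int\phi\,{\rm d}\nu-P(\phi)$ for $\nu$-a.e.\ $x$. The crucial --- and technically hardest --- point is a quantitative recurrence estimate: condition~(b) makes the total weight $\Lambda_n(\mathcal L(\Sigma)\setminus\mathcal G^M,\phi)$ exponentially subdominant once $M$ is large, and from this one shows that a $\nu$-typical orbit meets $\mathcal G^M$-cylinders of every large length with positive frequency. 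On those cylinders \eqref{w-Gibbs} controls the ratio $\mu_\phi([w])/\nu([w])$ from above and below, so a Radon--Nikodym argument gives $\nu\ll\mu_\phi$ with bounded density, and ergodicity of $\mu_\phi$ forces $\nu=\mu_\phi$. It is precisely this last recurrence estimate --- not available from (W)-specification alone, but supplied by the pressure gap in condition~(b) --- that constitutes the heart of the proof.
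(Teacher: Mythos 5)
This statement is not proved in the paper at all: it is quoted (in a specialized form) from Climenhaga--Thompson \cite{CT13}, and the authors only use its conclusion, so there is no in-paper argument to measure your proposal against. Judged as a reconstruction of the argument in \cite{CT13}, your sketch follows the right general lines: a partition-sum (Misiurewicz-type) construction of a candidate measure, a lower Gibbs bound on $\mathcal G^M$-cylinders coming from (W)-specification, an upper bound from submultiplicativity of partition sums, and uniqueness driven by Shannon--McMillan--Breiman together with the fact that condition (b) makes the prefix/suffix collections subdominant.

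Two steps, however, are asserted rather than proved, and they are precisely where the real work lies. First, ergodicity of $\mu_\phi$ does not follow by a ``routine Hopf-type argument'' from \eqref{w-Gibbs}: the two-sided bound is only available on $\mathcal G^M$-cylinders, with a lower constant $K_M$ degenerating in $M$, and in the Climenhaga--Thompson proof ergodicity of the constructed measure is \emph{not} an input to uniqueness --- instead one shows that an ergodic equilibrium state singular with respect to $\mu_\phi$ leads to a counting contradiction, and ergodicity of $\mu_\phi$ is then a consequence of uniqueness, not a prerequisite. Your final inference ``$\nu\ll\mu_\phi$ with bounded density, and ergodicity of $\mu_\phi$ forces $\nu=\mu_\phi$'' therefore rests on an unproved claim. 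Second, the ``quantitative recurrence estimate'' that you correctly identify as the heart of the matter is left as an assertion; the actual mechanism is to decompose $\nu$-typical words $x_1\cdots x_n$ as $uvw$ with $u\in\mathcal C^p$, $v\in\mathcal G$, $w\in\mathcal C^s$, and to show via condition (b) that the words with long prefix or suffix carry an exponentially negligible share of the partition sum, so that the Gibbs bounds can be applied to the good core $v$ rather than to the full word (this is exactly the kind of estimate the present paper performs for its own purposes in Lemma~\ref{empty-lem-new}). What this yields is non-singularity of $\nu$ relative to $\mu_\phi$ on suitable sets --- enough to contradict mutual singularity of distinct ergodic equilibrium states --- rather than the absolute continuity with bounded density that you claim. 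So the architecture of your sketch is sound, but the uniqueness half as written has genuine gaps at exactly the points you flagged as hardest.
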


\begin{remark}
More general potential functions than those in Theorem~\ref{CT-thm} were treated in \cite[Theorem~C]{CT13}, but we will not need them in this paper.
\end{remark}

\subsection{Obstruction entropy to specification}\label{obst-sec}
For a subshift $\Sigma$, we define  
\begin{equation}\label{spec-def}h_{\rm spec}^\bot(\Sigma)=\inf
\left\{
\begin{tabular}{l}
\!\!$h(\mathcal C^p\cup\mathcal C^s)\colon\mathcal L(\Sigma)=\mathcal C^p\mathcal G\mathcal C^s$ and\\
\!\!$\mathcal G^M$ has (W)-specification for all $M\in\mathbb N$\!\!\end{tabular}
\right\},
\end{equation}
and call this number
{\it the obstruction entropy to specification} on $\Sigma.$ 
Note that $h_{\rm spec}^\bot(\Sigma)\leq h_{\rm top}(\Sigma)$. By Theorem~\ref{CT-thm} with $\phi\equiv0$, the strict inequality
$h_{\rm spec}^\bot(\Sigma)< h_{\rm top}(\Sigma)$
implies the intrinsic ergodicity of $\Sigma$.

As is evident from the definition \eqref{spec-def},
the lower estimate of $h_{\rm spec}^\bot(\Sigma)$ is much harder than the upper one: to show 
$h_{\rm spec}^\bot(\Sigma)=0$ we only have to exhibit for any $\varepsilon>0$ one decomposition for which $h(\mathcal C^p\cup\mathcal C^s)<\varepsilon$, whereas to show $h_{\rm spec}^\bot(\Sigma)\geq c$ for some $c>0$ we must verify $h(\mathcal C^p\cup\mathcal C^s)\geq c$ for {\it any} decomposition $\mathcal L(\Sigma)=\mathcal C^p\mathcal G\mathcal C^s$ for which $\mathcal G^M$ has (W)-specification for all $M\in\mathbb N$.

\subsection{Entropy in zero temperature limit}\label{zero-t}
Let $\Sigma$ be a subshift and
let $f\in C(\Sigma)$. We say $\mu\in M(\Sigma,\sigma)$ is a {\it ground state} for the potential $f$ if 
there exist a sequence $(\beta_n)_{n\in\mathbb N}$ in $\mathbb R$
and a sequence $(\mu_n)_{n\in\mathbb N}$ in $M(\Sigma,\sigma)$ such that the following hold:

\begin{itemize}\item$\beta_n\to\infty$ as $n\to\infty$;

\item  $\mu_n$ is an equilibrium state for the potential $\beta_nf$ for all $n\in\mathbb N$;
 
\item $\mu_n\to\mu$ in the weak* topology as $n\to\infty$.
 \end{itemize}
 The term `ground state' has been borrowed from statistical mechanics as an analogy: since the parameter $\beta$ may be viewed as the inverse of temperature, the limit  $\beta\to\infty$ is called the zero temperature limit \cite{BLL}.

For the reader's convenience we include
 a proof of the next lemma, which can be found in \cite{BLL} for example.

\begin{lemma}\label{zero-t-lem}
Let $\Sigma$ be a subshift and 
let $\mu\in M(\Sigma,\sigma)$ be a ground state for the potential $f\in C(\Sigma)$. Let $(\beta_n)_{n\in\mathbb N}$ be a sequence in $\mathbb R$ and let $(\mu_n)_{n\in\mathbb N}$ be a sequence in $M(\Sigma,\sigma)$ such that $\beta_n\to\infty$ as $n\to\infty$, $\mu_n$ is an equilibrium state for the potential $\beta_nf$ for all $n\in\mathbb N$, and $\mu_n\to\mu$ in the weak* topology as $n\to\infty$. Then $\mu$ is $f$-maximizing, and we have
\[h(\mu)={\rm sup}\{h(\nu)\colon\nu\in M_{\rm max}(f)\}={\rm max}\{h(\nu)\colon\nu\in M_{\rm max}(f)\},\]
and
\[\lim_{n\to\infty} h(\mu_n)=h(\mu).\] 
\end{lemma}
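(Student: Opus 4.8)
The plan is to exploit the defining inequality of equilibrium states together with upper semicontinuity of entropy. First I would record the basic variational identity: for each $n$, since $\mu_n$ is an equilibrium state for $\beta_n f$, we have $h(\mu_n)+\beta_n\int f\,{\rm d}\mu_n=P(\beta_n f)\geq h(\nu)+\beta_n\int f\,{\rm d}\nu$ for every $\nu\in M(\Sigma,\sigma)$. Rearranging gives $\int f\,{\rm d}\mu_n\geq \int f\,{\rm d}\nu+\beta_n^{-1}(h(\nu)-h(\mu_n))$. Taking $n\to\infty$, using $\mu_n\to\mu$ (so $\int f\,{\rm d}\mu_n\to\int f\,{\rm d}\mu$), $\beta_n\to\infty$, and the uniform bound $0\le h(\mu_n)\le h_{\rm top}(\Sigma)<\infty$, the error term vanishes and I obtain $\int f\,{\rm d}\mu\geq\int f\,{\rm d}\nu$ for all $\nu$, i.e.\ $\mu$ is $f$-maximizing.

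Next I would pin down the entropy. Apply the equilibrium inequality once more, now comparing $\mu_n$ against an arbitrary $\nu\in M_{\rm max}(f)$: from $h(\mu_n)+\beta_n\int f\,{\rm d}\mu_n\geq h(\nu)+\beta_n\int f\,{\rm d}\nu$ and $\int f\,{\rm d}\mu_n\le\Lambda_\sigma(f)=\int f\,{\rm d}\nu$ we get $h(\mu_n)\geq h(\nu)+\beta_n(\int f\,{\rm d}\nu-\int f\,{\rm d}\mu_n)\geq h(\nu)$. Thus $h(\mu_n)\ge h(\nu)$ for all $n$ and all maximizing $\nu$; in particular $h(\mu_n)\ge \sup\{h(\nu):\nu\in M_{\rm max}(f)\}=:s$. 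On the other hand, upper semicontinuity of the entropy function (valid since $(\Sigma,\sigma)$ is expansive) together with $\mu_n\to\mu$ yields $h(\mu)\geq\limsup_{n\to\infty}h(\mu_n)\geq s$. Since $\mu$ itself is maximizing, $h(\mu)\leq s$, so $h(\mu)=s$ and the supremum is attained (hence equals the maximum).

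Finally, for the convergence of entropies, I already have $\liminf_{n}h(\mu_n)\ge s=h(\mu)$ from the previous paragraph, and $\limsup_{n}h(\mu_n)\le h(\mu)$ from upper semicontinuity; combining gives $\lim_{n\to\infty}h(\mu_n)=h(\mu)$. The only mild subtlety — the ``hard part,'' though it is not really hard here — is making sure the error terms are controlled uniformly in $n$: this is where boundedness of $h(\cdot)$ by $h_{\rm top}(\Sigma)$ and of $\int f\,{\rm d}\mu_n$ by $\|f\|_{C^0}$ are used, and where the fact that $\int f\,{\rm d}\mu=\Lambda_\sigma(f)$ (just established) is needed to see that $\int f\,{\rm d}\nu-\int f\,{\rm d}\mu_n\geq 0$ in the entropy comparison. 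Everything else is a direct manipulation of the variational principle and does not use any structural hypothesis on $\Sigma$ beyond expansiveness.
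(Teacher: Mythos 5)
Your proof is correct and follows essentially the same route as the paper's (the equilibrium-state variational inequality combined with upper semicontinuity of the entropy function); in fact your direct derivation of $h(\mu_n)\ge h(\nu)$ for every $\nu\in M_{\rm max}(f)$ streamlines the paper's contradiction argument and its subsequence remark for the $\liminf$ bound. The only cosmetic point is that the steps requiring $\beta_n\ge 0$ (dividing by $\beta_n$, and the inequality $h(\mu_n)\ge h(\nu)+\beta_n(\int f\,{\rm d}\nu-\int f\,{\rm d}\mu_n)$) hold only for all sufficiently large $n$, since $\beta_n\in\mathbb R$ with $\beta_n\to\infty$, which affects none of the limiting conclusions.
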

\begin{proof}
Since $M_{\rm max}(f)$ is nonempty and closed, 
the supremum in the statement of the lemma is attained.
For all $n\in\mathbb N$ we have $h(\mu_n)+\beta_n\int f{\rm d}\mu_n\geq h(\mu)+\beta_n\int f{\rm d}\mu$.
Dividing both sides by $\beta_n$ and letting $n\to\infty$ yields
$\int f{\rm d}\mu_n\to\Lambda_\sigma(f)$.
Hence $\mu$ is $f$-maximizing.

 Since the entropy map is upper semicontinuous, we have $\limsup_{n\to\infty} h(\mu_n)\leq h(\mu).$
If there were $\nu\in M_{\rm max}(f)$ such that $h(\mu)<h(\nu)$, then
for all sufficiently large $n$ we would have
\[h(\nu)+\beta_n\int f{\rm d}\nu=h(\nu)+\beta_n\int f{\rm d}\mu>h(\mu_n)+\beta_n\int f{\rm d}\mu_n,\]
in contradiction to the assumption that $\mu_n$ is an equilibrium state for the potential $\beta_nf$.
 Finally we claim $\liminf_{n\to\infty} h(\mu_n)\geq h(\mu)$, for otherwise we would reach a contradiction by applying the same argument to a subsequence of $(\mu_n)_{n\in\mathbb N}$.
\end{proof}
\subsection{Approximations of ergodic measures}
\label{construct-sec}

We are in position to state a proposition we have been leading up to.
\begin{prop}\label{hyp-lem2}
Let $\Sigma$ be a subshift with positive topological entropy that has a language decomposition $\mathcal L(\Sigma)=\mathcal C^p\mathcal G\mathcal C^s$ with $h(\mathcal C^p\cup\mathcal C^s)=H$
for some $\Hc\in[0,h_{\rm top}(\Sigma))$. 
For each $\mu\in M(\Sigma,\sigma)$ that is ergodic and satisfies $h(\mu)>\Hc$, the following statements hold:

\begin{itemize}
\item[(a)] For any open subset $U$ of $M(\Sigma,\sigma)$ that contains $\mu$, 
there exists a Lipschitz continuous function $f\colon\Sigma\to\mathbb R$ such that:
\begin{itemize}

\item[(i)] For any $\beta\geq 0$, there exists a unique equilibrium state for the potential $\beta f$, which has the weak Gibbs property, denoted by $\mu_{\beta f}$.

\item[(ii)] The map $\beta\in[0,\infty)\mapsto\mu_{\beta f}\in M(\Sigma,\sigma)$ is continuous and injective.

\item[(iii)] For all sufficiently large $\beta>0$ we have
$\mu_{\beta f}\in U$, and there exists $H_0>H$ such that $\lim_{\beta\to\infty}h(\mu_{\beta f})=H_0$.
\end{itemize}
\item[(b)] 
For any $\delta>0$ and any open subset $V$ of $M(\Sigma,\sigma)$ that contains $\mu$, there exists an ergodic measure $\nu\in V$ such that $h(\nu)<\delta$.
\end{itemize}
\end{prop}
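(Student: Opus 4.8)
\emph{Strategy for (a), granting the construction.} I would first reduce (a) to a single geometric construction: a subshift $Y\subseteq\Sigma$ with $M(Y,\sigma)\subseteq U$ and $h_{\rm top}(Y)>H$. Given such a $Y$, put $f=-d(\cdot,Y)$; this is Lipschitz, $f\leq 0$, and $f^{-1}(0)=Y$, so $\Lambda_\sigma(f)=0$ and $M_{\rm max}(f)=\{\nu:\int f\,{\rm d}\nu=0\}=M(Y,\sigma)$. For every $\beta\geq 0$ the potential $\beta f$ is Lipschitz, hence H\"older, and I would check the hypotheses of Theorem~\ref{CT-thm} for $\beta f$ with the \emph{given} decomposition $\mathcal L(\Sigma)=\mathcal C^p\mathcal G\mathcal C^s$: condition (a) holds by assumption, and condition (b) holds because $\beta f\leq 0$ forces $\Lambda_n(\mathcal C^p\cup\mathcal C^s,\beta f)\leq\#(\mathcal C^p\cup\mathcal C^s)_n$, which grows at exponential rate $H$, while $P(\beta f)\geq h_{\rm top}(Y)>H$ (use measures carried by $Y$, on which $f\equiv 0$); so the series in (b) is dominated by a convergent geometric series. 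Theorem~\ref{CT-thm} then yields the unique weak-Gibbs equilibrium state $\mu_{\beta f}$ of $\beta f$, which is (i). The weak-Gibbs bounds show $\mu_{\beta f}$ charges every cylinder over a word in $\bigcup_M\mathcal G^M=\mathcal L(\Sigma)$, so $\mu_{\beta f}$ has full support; combined with the fact that $\max f=0$ is attained only on the proper subset $Y$, this gives $\int f\,{\rm d}\mu_{\beta f}<0$ and, in particular, that $f$ is not cohomologous to a constant. Injectivity of $\beta\mapsto\mu_{\beta f}$ then follows from the standard fact that two potentials satisfying the hypotheses of Theorem~\ref{CT-thm} with a common equilibrium state differ by a coboundary plus a constant; continuity follows from upper semicontinuity of entropy, uniqueness of equilibrium states, and compactness of $M(\Sigma,\sigma)$; this is (ii). Finally, by compactness any sequence $\beta_n\to\infty$ has $\mu_{\beta_n f}$ converging along a subsequence; any such limit is, by definition, a ground state for $f$, so Lemma~\ref{zero-t-lem} applies: the limit lies in $M_{\rm max}(f)=M(Y,\sigma)\subseteq U$ and has entropy $\max\{h(\nu):\nu\in M(Y,\sigma)\}=h_{\rm top}(Y)=:H_0>H$. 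Since every such limit point lies in the open set $U$ and all have entropy $H_0$, we get $\mu_{\beta f}\in U$ for all large $\beta$ and $\lim_{\beta\to\infty}h(\mu_{\beta f})=H_0$, which is (iii).

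\emph{The construction of $Y$ for (a).} Using ergodicity of $\mu$, for any finite family of Lipschitz test functions determining $U$ and any $\eta>0$ there are, for all large $\ell$, at least $e^{(h(\mu)-\eta)\ell}$ words of length $\ell$ whose empirical distributions are within the prescribed tolerance of $\mu$. Since $h(\mu)>H=h(\mathcal C^p\cup\mathcal C^s)$, such words carry substantial ``good'' material: comparing $e^{(h(\mu)-\eta)\ell}$ with the number $\lesssim e^{h_{\rm top}(\Sigma)\ell-m(h_{\rm top}(\Sigma)-H)}$ of words whose $\mathcal C^p\mathcal G\mathcal C^s$-decomposition has prefix or suffix of length exceeding $m$, one extracts many $\mu$-representative words lying in $\mathcal G^M$ for a controllable $M$. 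The (W)-specification of $\mathcal G^M$ then lets us concatenate these words (or suitable good core subwords) inside $\mathcal L(\Sigma)$ with controlled gaps, and I would take $Y$ to be the closure of the set of all resulting bi-infinite concatenations. One then checks $h_{\rm top}(Y)>H$ from the exponential number of admissible blocks per unit length, and $M(Y,\sigma)\subseteq U$ from the $\mu$-representativeness of the blocks together with the negligibility of the gap proportion in a concatenation.

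\emph{Part (b).} The scheme is identical, except that here one \emph{wants} low complexity: use very few blocks --- in the extreme, a single $\mu$-representative block $v$ --- glued with uniformly bounded gaps, so that the resulting subshift $Y\subseteq\Sigma$ has $h_{\rm top}(Y)<\delta$ while still $M(Y,\sigma)\subseteq V$. Then any ergodic measure $\nu$ on $Y$ --- one exists since $M(Y,\sigma)$ is a nonempty compact convex set --- lies in $V$ and satisfies $h(\nu)\leq h_{\rm top}(Y)<\delta$. (If the block construction is arranged so that $Y$ has Bowen's specification, one may instead invoke Sigmund's density of periodic-orbit measures to produce such a $\nu$ of entropy $0$.)

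\emph{Main obstacle.} In both parts the delicate point is to force $M(Y,\sigma)$ to lie genuinely inside the prescribed weak* neighborhood. The naive device of taking the good core $v$ of a single $\mu$-typical word breaks down when $h(\mu)$ is only slightly larger than $H$: the core can then be a small and unrepresentative fraction of the word, and concatenating copies of it produces measures bounded away from $\mu$. Circumventing this --- extracting enough $\mu$-representative good structure to control $h_{\rm top}(Y)$ from below in (a) and from above in (b), while keeping the gap proportion in the concatenation asymptotically negligible so that \emph{every} invariant measure of $Y$ is pushed close to $\mu$ --- is precisely where the hypothesis $h(\mu)>H$ enters essentially, and is where the bulk of the work lies.
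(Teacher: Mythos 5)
Your overall architecture is the same as the paper's: build an auxiliary subshift $Y\subseteq\Sigma$ whose invariant measures all lie in $U$ and whose topological entropy exceeds $H$, set $f=-d(\cdot,Y)$, invoke Theorem~\ref{CT-thm} for every $\beta f$ and Lemma~\ref{zero-t-lem} in the zero-temperature limit, and for (b) repeat with a single block so that the entropy of the auxiliary subshift is small. However, the heart of the proposition — actually producing $Y$ with $M(Y,\sigma)\subseteq U$ and $h_{\rm top}(Y)>H$ — is not carried out, and the one counting inequality you do write down is too weak to do it. Comparing the count $e^{(h(\mu)-\eta)\ell}$ of $\mu$-representative words against the bound $\lesssim e^{h_{\rm top}(\Sigma)\ell-m(h_{\rm top}(\Sigma)-H)}$ for words whose prefix or suffix exceeds $m$ forces $m$ to be a definite (possibly large) fraction of $\ell$ whenever $h_{\rm top}(\Sigma)$ is much bigger than $h(\mu)$, so the trimmed cores need be neither representative nor numerous — precisely the obstacle you name at the end and then defer. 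The missing idea (the paper's Steps 1--2) is to count inside the Shannon--McMillan--Breiman selected family $\mathcal Q_k$ itself: the two-sided cylinder bounds $e^{-k(h(\mu)+\varepsilon)}\le\mu([w])\le e^{-k(h(\mu)-\varepsilon)}$ bound the number of selected words with prefix length $a$ by $e^{H_0a}e^{(k-a)h(\mu)+2\varepsilon k}$, so prefixes and suffixes longer than ${\rm const}\cdot\varepsilon k/h_0$ occupy an exponentially negligible fraction of $\mathcal Q_k$; a pigeonhole argument then fixes one short pair $(a_*,b_*)$, and the corresponding cores lie in $\mathcal G$, still number at least $e^{(h(\mu)-\varepsilon-10\varepsilon H_0/h_0)k}$, and retain Birkhoff averages close to $\mu$. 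Both (a) and (b) (where your single block $v$ must lie in the good collection and stay representative after trimming) rest on this step, so as written the proposal does not yet prove the proposition.

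There is a second genuine gap in (ii): the ``standard fact'' that two potentials satisfying the hypotheses of Theorem~\ref{CT-thm} with a common equilibrium state differ by a coboundary plus a constant is not available in this setting. The lower Gibbs bound in \eqref{w-Gibbs} holds only on $\mathcal G^M$-words with $M$-dependent constants, which yields only the estimate $|(\beta_1-\beta_2)S_nf(x)-n(P(\beta_1f)-P(\beta_2f))|\le C$ on good cylinders, not a Livšic-type cohomology identity. The paper's proof combines exactly this weak-Gibbs estimate with a second run of the word-selection construction, applied now to the measure of maximal entropy $\nu_0$ of the auxiliary subshift (legitimate because $h(\nu_0)>H$ and $\int f\,{\rm d}\nu_0=0$), to produce good words on which $\frac{1}{n}|S_nf|$ is arbitrarily small; this forces $P(\beta_1f)=P(\beta_2f)$, contradicting the strict monotonicity of $\beta\mapsto P(\beta f)$ (which uses that $\mu_{\beta f}$ is fully supported and $f\le0$ is nonconstant). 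You should either prove the rigidity statement you invoke or replace it by such a direct argument. The remaining ingredients — (i), the continuity half of (ii), and (iii) via compactness and Lemma~\ref{zero-t-lem} — are argued correctly and essentially as in the paper.
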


Proposition~\ref{hyp-lem2} provides two different kinds of approximations of ergodic measures with sufficiently large entropy.
In part (a), the measure $\mu$ is approximated in the weak* topology by a ground state with positive entropy in the zero temperature limit. In part (b), $\mu$ is approximated in the weak* topology by measures with arbitrarily small entropy. From Proposition~\ref{hyp-lem2}, we deduce the abundance of paths of ergodic measures with high complexity and a statement on 
the density of ergodic measures with arbitrarily small entropy.
 \begin{cor}\label{join-lem}
Let $\Sigma$ be a subshift that satisfies $h_{\rm spec}^\bot(\Sigma)<h_{\rm top}(\Sigma)$. If $h_{\rm spec}^\bot(\Sigma)\leq H<h_{\rm top}(\Sigma)$, then for any $\mu\in M(\Sigma,\sigma)$ that is ergodic and satisfies $h(\mu)>H$, and for any open subset $U$ of $M(\Sigma,\sigma)$ that contains $\mu$, there exists a homeomorphism $t\in[0,1]\mapsto\nu_t\in U$ onto its image such that for any $t\in [0,1]$, $\nu_t$ is ergodic, fully supported and satisfies $h(\nu_t)>H$.
\end{cor}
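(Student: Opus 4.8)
The plan is to read the arc off Proposition~\ref{hyp-lem2}(a). Since $h_{\rm spec}^\bot(\Sigma)\le H<h_{\rm top}(\Sigma)$, fix a language decomposition $\mathcal L(\Sigma)=\mathcal C^p\mathcal G\mathcal C^s$ with $h(\mathcal C^p\cup\mathcal C^s)=H$, and observe that $\Sigma$ has positive topological entropy. Given an ergodic $\mu$ with $h(\mu)>H$ and an open set $U\ni\mu$ in $M(\Sigma,\sigma)$, Proposition~\ref{hyp-lem2}(a) produces a Lipschitz function $f\colon\Sigma\to\mathbb R$ together with the family $(\mu_{\beta f})_{\beta\ge0}$ of unique equilibrium states for the potentials $\beta f$, enjoying properties (i), (ii) and (iii). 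I would obtain $(\nu_t)_{t\in[0,1]}$ as an affine reparametrization of a compact sub-interval $\{\mu_{\beta f}\colon\beta\in[\beta_*,\beta_1]\}$ of this family, for a suitable choice $\beta_*<\beta_1$.

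First I would record that each $\mu_{\beta f}$ is ergodic and fully supported. Ergodicity is immediate from uniqueness in (i): the set of equilibrium states for the potential $\beta f$ is a face of $M(\Sigma,\sigma)$ (the entropy function and the map $\nu\mapsto\int\beta f\,{\rm d}\nu$ being affine and subject to the variational principle), so if it is a singleton, that measure is an extreme point of $M(\Sigma,\sigma)$, hence ergodic. Full support follows from the weak Gibbs property asserted in (i): by the estimate \eqref{w-Gibbs}, for every $M\in\mathbb N$ and every $w\in\mathcal G^M$ one has $\mu_{\beta f}([w])\ge K_M\exp\!\big(-P(\beta f)|w|+\sup_{[w]}S_{|w|}(\beta f)\big)>0$; since $\mathcal L(\Sigma)=\bigcup_{M}\mathcal G^M$, this gives $\mu_{\beta f}([w])>0$ for every $w\in\mathcal L(\Sigma)$, and as the cylinders form a basis for the topology of $\Sigma$, $\mu_{\beta f}$ charges every non-empty open set.

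Then I would extract the arc. By (iii) there is $\beta_0$ with $\mu_{\beta f}\in U$ for all $\beta\ge\beta_0$, and since $\lim_{\beta\to\infty}h(\mu_{\beta f})=H_0>H$ there is $\beta_*\ge\beta_0$ with $h(\mu_{\beta f})>H$ for all $\beta\ge\beta_*$. Fix any $\beta_1>\beta_*$, let $\iota\colon[0,1]\to[\beta_*,\beta_1]$ be the affine bijection $\iota(t)=(1-t)\beta_*+t\beta_1$, and set $\nu_t=\mu_{\iota(t)f}$. By (ii) the map $\beta\mapsto\mu_{\beta f}$ is continuous and injective, so $t\mapsto\nu_t$ is a continuous injection of the compact space $[0,1]$ into the Hausdorff space $M(\Sigma,\sigma)$, hence a homeomorphism onto its image. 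By the preceding paragraph together with the choice of $\beta_*$ and $\beta_1$, every $\nu_t$ lies in $U$, is ergodic and fully supported, and satisfies $h(\nu_t)>H$, which is the desired conclusion.

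Essentially all the work is already carried by Proposition~\ref{hyp-lem2}(a), so I do not anticipate a serious obstacle: the corollary is a routine harvest of it. Two points deserve care. First, (iii) controls only the \emph{limiting} entropy and only places $\mu_{\beta f}$ in $U$ for sufficiently large $\beta$, so one must pass from the non-compact parameter ray $[0,\infty)$ to a compact sub-interval of large parameters in order that the \emph{whole} image of the arc --- not merely its accumulation as $\beta\to\infty$ --- lie inside $U$ and have entropy strictly above $H$; this is exactly the role played by the threshold $\beta_*$ and the finite right endpoint $\beta_1$. Second, one should make sure that a language decomposition of cost exactly $H$ is available (which the hypothesis $h_{\rm spec}^\bot(\Sigma)\le H$ is meant to provide), since this is what makes (iii) return $H_0>H$ rather than merely $H_0$ above the cost of some cheaper decomposition.
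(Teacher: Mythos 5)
Your argument is correct and is essentially the paper's own proof: the paper derives the corollary directly from Proposition~\ref{hyp-lem2}(a), noting only that the weak Gibbs property \eqref{w-Gibbs} forces full support, and your restriction to a compact parameter interval $[\beta_*,\beta_1]$ with the continuity/injectivity of $\beta\mapsto\mu_{\beta f}$ is exactly the intended way to harvest the homeomorphic arc. The extra details you supply (ergodicity of the unique equilibrium state, the threshold $\beta_*$ for membership in $U$ and entropy above $H$, and the caveat about a decomposition of cost at most $H$) are the routine points the paper leaves implicit.
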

\begin{proof}
Measures with the weak Gibbs property \eqref{w-Gibbs} are fully supported. Hence, Corollary~\ref{join-lem}
follows from part (a) of Proposition~\ref{hyp-lem2}.\end{proof}

\begin{cor}\label{join-lem-cor}Let $\Sigma$ be a subshift that satisfies $h_{\rm spec}^\bot(\Sigma)<h_{\rm top}(\Sigma)$. If $h_{\rm spec}^\bot(\Sigma)\leq H<h_{\rm top}(\Sigma)$, then for any $\mu\in M(\Sigma,\sigma)$ that is ergodic and satisfies $h(\mu)>H$, any open subset 
$V$ of $M(\Sigma,\sigma)$ that contains $\mu$ and any $\delta>0$, there exists a shift-invariant ergodic measure $\nu\in V$
satisfying $h(\nu)<\delta$. \end{cor}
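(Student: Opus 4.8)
The plan is to derive Corollary~\ref{join-lem-cor} directly from part (b) of Proposition~\ref{hyp-lem2}. First I would unpack the hypothesis $h_{\rm spec}^\bot(\Sigma)<h_{\rm top}(\Sigma)$ together with $h_{\rm spec}^\bot(\Sigma)\leq H<h_{\rm top}(\Sigma)$: by the definition \eqref{spec-def} of the obstruction entropy to specification as an infimum, for any $\varepsilon>0$ there is a language decomposition $\mathcal L(\Sigma)=\mathcal C^p\mathcal G\mathcal C^s$ with $\mathcal G^M$ having (W)-specification for every $M\in\mathbb N$ and with $h(\mathcal C^p\cup\mathcal C^s)<h_{\rm spec}^\bot(\Sigma)+\varepsilon$. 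Choosing $\varepsilon$ small enough that $h_{\rm spec}^\bot(\Sigma)+\varepsilon<h_{\rm top}(\Sigma)$, and then invoking Proposition~\ref{hyp-lem2} (whose hypothesis only requires a decomposition with $h(\mathcal C^p\cup\mathcal C^s)=H'$ for some $H'\in[0,h_{\rm top}(\Sigma))$, not the infimal value), we may take $H'=h(\mathcal C^p\cup\mathcal C^s)$. A subtlety is that Proposition~\ref{hyp-lem2} is phrased for ergodic $\mu$ with $h(\mu)>H'$, whereas here we are given $h(\mu)>H\geq h_{\rm spec}^\bot(\Sigma)$; so I need $H'\leq H$, i.e.\ I must choose the decomposition with $h(\mathcal C^p\cup\mathcal C^s)\leq H$. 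If $H>h_{\rm spec}^\bot(\Sigma)$ this is immediate by taking $\varepsilon\leq H-h_{\rm spec}^\bot(\Sigma)$. If $H=h_{\rm spec}^\bot(\Sigma)$, the infimum might not be attained, so I would instead argue as follows: pick a decomposition with $h(\mathcal C^p\cup\mathcal C^s)=H'<h(\mu)$ (possible since the infimum is $H=h_{\rm spec}^\bot(\Sigma)<h(\mu)$, so values arbitrarily close to $H$, hence some value strictly below $h(\mu)$, are achieved); then $\mu$ is ergodic with $h(\mu)>H'$, so Proposition~\ref{hyp-lem2} applies verbatim.

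With such a decomposition fixed and $\mu$ satisfying $h(\mu)>H'$, I would then simply apply part (b) of Proposition~\ref{hyp-lem2}: for the given open set $V\ni\mu$ and the given $\delta>0$, it furnishes an ergodic measure $\nu\in V$ with $h(\nu)<\delta$. This is precisely the conclusion of Corollary~\ref{join-lem-cor}, so the proof is complete. The only point requiring care is the bookkeeping with $H$ versus $H'$ described above, which is why I would make the choice of decomposition explicit rather than quoting the corollary statement's $H$ directly.

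I do not expect any genuine obstacle here — all the analytic work is already contained in Proposition~\ref{hyp-lem2}, and Corollary~\ref{join-lem-cor} is an essentially formal consequence, exactly parallel to how Corollary~\ref{join-lem} is deduced from part (a). The most delicate (though still routine) ingredient is the observation that the defining infimum for $h_{\rm spec}^\bot(\Sigma)$ need not be realized, which forces the mild detour of selecting a decomposition whose suffix/prefix entropy lands strictly between $h_{\rm spec}^\bot(\Sigma)$ and $h(\mu)$; once that is done, nothing else is needed.
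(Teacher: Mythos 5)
Your proposal is correct and follows exactly the paper's route: the corollary is deduced directly from part (b) of Proposition~\ref{hyp-lem2}, which the paper states in a single line. Your extra bookkeeping — choosing, via the infimum in \eqref{spec-def}, a decomposition whose prefix/suffix entropy lies strictly below $h(\mu)$ so that the proposition's hypothesis is met even when the infimum is not attained — is a sound and worthwhile clarification of a step the paper leaves implicit, but it is not a different argument.
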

\begin{proof}Follows from part (b) of Proposition~\ref{hyp-lem2}.\end{proof}
 
The rest of this subsection is entirely dedicated to a proof of Proposition~\ref{hyp-lem2}, split into the proof of (a) and that of (b).

\begin{proof}[Proof of Proposition~\ref{hyp-lem2}(a)]
The proof breaks into four steps.
First, for each $k\in\mathbb N$ we construct a collection $\mathcal Q_k\subset\mathcal L_k(\Sigma)$ by selecting words that are associated with the measure $\mu$. Next we construct a collection 
$\tilde{\mathcal Q}_k$ of words with word lengths approximately $k$ that has (W)-specification, by extracting good words in the core $\mathcal G$ from elements of $\mathcal Q_k$.
In the third step we glue the words in $\tilde{\mathcal Q}_k$ and construct a subshift in $\Sigma$ that approximates the measure $\mu$ in a particular sense.
Finally we verify (i) (ii) (iii).
\medskip

\noindent{\it Step~1: The selection of words.}
Let $\mu\in M(\Sigma,\sigma)$ be ergodic and satisfy $h(\mu)>\Hc$. 
Let $f_1,\ldots,f_m$ be an arbitrary finite collection of functions in $C(\Sigma)$.
Since the collection of $1$-cylinders in $\Sigma$ generates the Borel sigma-algebra of $\Sigma$, it follows
from Shannon-McMillan-Breiman's theorem and Birkhoff's ergodic theorem that  
for $\mu$-a.e. $x\in\Sigma$ we have
\[\lim_{k\to\infty}\frac{1}{k}\log\mu([x_1\cdots x_k])
=-h(\mu)\]
and
\[\lim_{k\to\infty}\frac{1}{k}S_kf_i(x)=\int f_i{\rm d}\mu \ \text{ for all $i\in\{1,\ldots,m\}$.}\]
We take 
$H_0\in(H,h(\mu))$, put \[h_0=h(\mu)-H_0,\]
and let
\begin{equation}\label{epsilon-def}\varepsilon\in\left(0,\min\left\{\frac{h_0}{18},\frac{h(\mu)-H}{(2+10H_0/h_0)}\right\}\right).\end{equation}
For each $x\in\Sigma$ for which the above two equalities hold, let $N(x)\in\mathbb N$
denote the minimal integer such that
\[\sup_{k\geq N(x)}\left|\frac{1}{k}\log\mu([x_1\cdots x_k])
+h(\mu)\right|<\varepsilon,\]
and
\[\sup_{k\geq N(x)}\max_{i\in\{1,\ldots,m\}}\left|\frac{1}{k}S_kf_i(x)-\int f_i{\rm d}\mu\right|<\frac{\varepsilon}{3}.\]
There exist a Borel set $\Sigma_0\subset\Sigma$ and
$N_0\in\mathbb N$ 
such that $0<\mu(\Sigma_0)<1$
and $N(x)=N_0$ for all $x\in \Sigma_0$. For $k\in\mathbb N$ we set
\begin{equation}\label{Hk}\mathcal Q_k=\{w\in\mathcal L_{k}(\Sigma)\colon[w]\cap \Sigma_0\neq\emptyset\}.\end{equation}
If $k\geq N_0$ is sufficiently large, then
for each $w\in\mathcal Q_k$ we have
\begin{equation}\label{size}
e^{-k(h(\mu)+\varepsilon)}\leq
\mu([w])\leq e^{-k(h(\mu)-\varepsilon)},\end{equation}
and
\begin{equation}\label{meas-eq1}\max_{x\in [w]}\max_{i\in\{1,\ldots,m\}}\left|\frac{1}{k}S_kf_i(x)- \int f_i{\rm d}\mu\right|<\frac{\varepsilon}{2}.
\end{equation}
Moreover, from \eqref{size} we have
\begin{equation}\label{meas-eq2}\mu(\Sigma_0)e^{(h(\mu)-\varepsilon)k}
\leq \#\mathcal Q_k\leq e^{(h(\mu)+\varepsilon)k}
.\end{equation}
\medskip

\noindent{\it Step~2: Extracting good words.} 
For the collections $\mathcal C^p,\mathcal C^s\subset\mathcal L(\Sigma)$ in the assumption,
take $N_1\in\mathbb N$ such that
\begin{equation}\label{meas-eq0}\#(\mathcal C_{n}^p\cup\mathcal C_{n}^s)\leq
e^{H_0  n}\ \text{ for all }n\geq N_1.\end{equation}
Put
\[N_2=N_0+N_1.\]

For each $w\in\mathcal Q_k$, we fix once and for all a decomposition \[w=p(w)c(w)s(w),\ \
p(w)\in\mathcal C^p, c(w)\in \mathcal G, s(w)\in\mathcal C^s.\] Note that $p(w)$, $c(w)$, $s(w)$ can be the empty word.
For $k\in\mathbb N$ and $a,b\in\mathbb N\cup\{0\}$ satisfying $a+b\leq k$, define
\[\mathcal Q_{k}(a,b)=\{w\in\mathcal Q_k\colon |p(w)|=a,  |s(w)|=b
\}.\]
Note that $\mathcal Q_k(a,b)$ are pairwise disjoint disjoint.
We also define
\[\begin{split}\mathcal Q^p_{k}(a,b)&=
\{p(w)\colon w\in\mathcal Q_k,\ |p(w)|=a,\ |s(w)|=b \},\\
\mathcal Q_{k}^c(a,b)&=\{c(w)\colon w\in\mathcal Q_k,\ |p(w)|=a,\ |s(w)|=b \},\\
\mathcal Q_k^s(a,b)&=
\{s(w)\colon w\in\mathcal Q_k,\ |p(w)|=a,\ |s(w)|=b \}.\end{split}\]
Since $\mathcal L(\Sigma)=\mathcal C^p\mathcal G\mathcal C^s$, we have
\[\mathcal Q_{k}=\bigcup_{a,b\in\mathbb N\cup\{0\},a+b\leq k}\mathcal Q_{k}(a,b).
\]

The next lemma asserts that the collection of words in $\mathcal Q_k$ with relatively short prefixes and suffixes has a definite fraction in exponential scale.
\begin{lemma}\label{empty-lem-new}For all sufficiently large $k\geq N_2$ we have
\[\sum_{a=0}^{\lfloor5\varepsilon k/h_0\rfloor-1}\sum_{b=0}^{\lfloor4\varepsilon k/h_0\rfloor-1}\#\mathcal Q_{k}(a,b)\geq\frac{\mu(\Sigma_0)}{2}e^{(h(\mu)-\varepsilon)k}.\]
\end{lemma}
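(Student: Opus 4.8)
\emph{Approach.} The plan is to bound the complementary (``bad'') part of the partition $\mathcal Q_k=\bigcup_{a,b}\mathcal Q_k(a,b)$ (the $\mathcal Q_k(a,b)$ being pairwise disjoint with union $\mathcal Q_k$), namely those $w$ with long prefix $|p(w)|\geq A_p:=\lfloor 5\varepsilon k/h_0\rfloor$ or long suffix $|s(w)|\geq A_s:=\lfloor 4\varepsilon k/h_0\rfloor$, and to show this bad part is exponentially negligible compared with $e^{(h(\mu)-\varepsilon)k}$. Subtracting it from the lower bound $\#\mathcal Q_k\geq\mu(\Sigma_0)e^{(h(\mu)-\varepsilon)k}$ of \eqref{meas-eq2} then leaves at least $\tfrac{\mu(\Sigma_0)}{2}e^{(h(\mu)-\varepsilon)k}$ for the good indices, which is exactly the asserted inequality. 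Since the bad index set is covered by $\{a\geq A_p\}$ and $\{b\geq A_s\}$, it suffices to bound $\sum_{a\geq A_p}\#\{w\in\mathcal Q_k:|p(w)|=a\}$ and $\sum_{b\geq A_s}\#\{w\in\mathcal Q_k:|s(w)|=b\}$ separately; note that for $k$ large both thresholds exceed $N_2$, so \eqref{meas-eq0} and the estimates behind \eqref{size} are available for all lengths occurring in these sums.

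\emph{Long prefixes.} Fix $a\geq A_p$. If $w\in\mathcal Q_k$ has $|p(w)|=a$, then $[w]\subseteq[p(w)]$ with $p(w)\in\mathcal C^p_a$ and $[p(w)]\cap\Sigma_0\neq\emptyset$, so the cylinders $[w]$ (pairwise disjoint, of $\mu$-measure at least $e^{-(h(\mu)+\varepsilon)k}$ by \eqref{size}) are contained in $\bigcup\{[u]:u\in\mathcal C^p_a,\ [u]\cap\Sigma_0\neq\emptyset\}$. There are at most $e^{H_0 a}$ such $u$ by \eqref{meas-eq0}, and each satisfies $\mu([u])\leq e^{-(h(\mu)-\varepsilon)a}$ by \eqref{size} applied with $a$ in place of $k$. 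Counting by measure,
\[\#\{w\in\mathcal Q_k:|p(w)|=a\}\leq e^{(h(\mu)+\varepsilon)k}\,e^{H_0 a}\,e^{-(h(\mu)-\varepsilon)a}=e^{(h(\mu)+\varepsilon)k}\,e^{-(h_0-\varepsilon)a},\]
using $H_0=h(\mu)-h_0$. Since $\varepsilon<h_0/18$ the series $\sum_{a\geq A_p}e^{-(h_0-\varepsilon)a}$ is geometric, and with $A_p\geq 5\varepsilon k/h_0-1$ and $5\varepsilon^2/h_0<\varepsilon/3$ one gets $\sum_{a\geq A_p}\#\{w\in\mathcal Q_k:|p(w)|=a\}\leq C\,e^{(h(\mu)-3\varepsilon)k}$ for a constant $C$.

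\emph{Long suffixes.} Here the cylinder argument cannot be copied, since $\Sigma_0$ is not shift invariant and there is no reason for $[s(w)]$ to meet $\Sigma_0$. Instead I would factor $w=w'v$ with $v=s(w)\in\mathcal C^s_b$ and $w'\in\mathcal L_{k-b}(\Sigma)$; as $[w]\cap\Sigma_0\neq\emptyset$ forces $[w']\cap\Sigma_0\neq\emptyset$, i.e.\ $w'\in\mathcal Q_{k-b}$, the map $w\mapsto(w',v)$ is injective, so $\#\{w\in\mathcal Q_k:|s(w)|=b\}\leq\#\mathcal Q_{k-b}\cdot\#\mathcal C^s_b$. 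When $k-b$ is large, \eqref{meas-eq2} and \eqref{meas-eq0} bound this by $e^{(h(\mu)+\varepsilon)(k-b)}e^{H_0 b}$, and summing over $b\geq A_s$ gives a geometric series whose $k$-exponent lies below $h(\mu)-3\varepsilon$, again by $\varepsilon<h_0/18$. For the finitely many $b$ with $k-b$ bounded, the crude bound $\#\mathcal Q_{k-b}\leq(\#A)^{k-b}$ together with $\#\mathcal C^s_b\leq e^{H_0 b}\leq e^{H_0 k}\leq e^{(h(\mu)-18\varepsilon)k}$ makes those terms negligible as well. Adding the two sums, each $o(e^{(h(\mu)-\varepsilon)k})$, gives $\sum_{\text{bad}}\#\mathcal Q_k(a,b)\leq\tfrac{\mu(\Sigma_0)}{2}e^{(h(\mu)-\varepsilon)k}$ for $k$ large, and the lemma follows.

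\emph{Main obstacle.} The delicate point is the suffix estimate: unlike the prefix one it does not reduce to controlling the $\mu$-measure of a union of $\mathcal C^s$-cylinders meeting $\Sigma_0$, so the concatenation bound $\#\mathcal Q_{k-b}\cdot\#\mathcal C^s_b$ must be used and the boundary regime $k-b=O(1)$ dealt with separately. Beyond that, the work is bookkeeping: verifying that all intermediate lengths are large enough for \eqref{meas-eq0}, \eqref{size}, \eqref{meas-eq2} to apply, and checking that the thresholds $5\varepsilon k/h_0$ and $4\varepsilon k/h_0$ chosen in \eqref{epsilon-def} make the gains $e^{-(h_0-\varepsilon)a}$ and $e^{-(h_0+\varepsilon)b}$ outweigh the losses $e^{H_0 a}$ and $e^{H_0 b}$ with exponent strictly below $h(\mu)-\varepsilon$ to spare.
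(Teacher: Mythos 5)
Your proposal is correct and follows essentially the same route as the paper's proof: both split the bad set into long-prefix and long-suffix parts, bound the first via \eqref{meas-eq0} together with the measure bounds \eqref{size}, bound the second via the concatenation estimate $\#\mathcal Q_{k-b}\cdot\#\mathcal C^s_b$ with the boundary regime $k-b$ small handled by the crude $(\#A)^{N_0}e^{H_0k}$ bound, and then subtract from the lower bound in \eqref{meas-eq2}. The only cosmetic difference is that in the prefix estimate you count the disjoint cylinders $[w]$ by measure inside the union of prefix cylinders meeting $\Sigma_0$, whereas the paper counts completions of each prefix; both yield the same bound $e^{(h(\mu)+\varepsilon)k}e^{-(h_0-\varepsilon)a}$.
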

\begin{proof}
For $k\in\mathbb N$ and $a,b\in\mathbb N\cup\{0\}$ with $a+b\leq k$,
put
\[\mathcal Q_k(a,\cdot)=\{ w\in\mathcal Q_k\colon |p(w)|=a\} \ \text{ and }\  \mathcal Q_{k}(\cdot,b)=\{w\in\mathcal Q_k\colon |s(w)|=b\}.\]
In what follows we derive three intermediate inequalities on the cardinalities of these collections, and combine them to deduce the desired one.

First, by \eqref{meas-eq0}, for all $a\in\{ N_1,\ldots,k\}$ we have
\[\#\{p(w)\colon w\in\mathcal Q_k,\ |p(w)|=a\}\leq e^{H_0a}.\]
 From \eqref{size}, 
 for all $a\in\{N_0,\ldots,k-1\}$
 and all  
$w\in\mathcal Q_k$ such that $|p(w)|=a$, 
 we have 
 \[\#\{v\in\mathcal L_{k-a}(\Sigma)\colon p(w)v\in\mathcal Q_k\}\leq e^{(k-a)h(\mu)+\varepsilon(k+a)}\leq e^{(k-a)h(\mu)+2\varepsilon k}.\]
  Multiplying these two estimates yield
 \[\#\mathcal Q_k(a,\cdot)\leq e^{H_0a}e^{(k-a)h(\mu)+2k\varepsilon}\ \text{ for all }a\in\{N_1,\ldots,k\},\]
and hence
\begin{equation}\label{cardi-eq1}\begin{split}\sum_{a=\lfloor5\varepsilon k/h_0\rfloor}^{k}\#\mathcal Q_k(a,\cdot)&\leq \sum_{a=\lfloor5\varepsilon k/h_0\rfloor}^{k}e^{H_0 a}e^{(k-a)h(\mu)+2\varepsilon k}\\
&\leq e^{(h(\mu)+2\varepsilon)k}
\sum_{a=\lfloor 5\varepsilon k/h_0\rfloor}^{k}
e^{-h_0a}
\leq e^{(h(\mu)-2\varepsilon)k},\end{split}\end{equation}
provided $k$ is sufficiently large.

Next, by \eqref{meas-eq2}, for all $b\in\{0,\ldots, k-N_0\}$ we have 
\[\#\{p(w)c(w)\colon w\in \mathcal Q_k,\ |s(w)|=b\}\leq\#\mathcal Q_{k-b}\leq e^{(h(\mu)+\varepsilon)(k-b) }.\]
By \eqref{meas-eq0}, for each $b\in\{N_1,\ldots,k\}$,
any element of the left set 
is concatenated to at most $e^{H_0b}$ elements of $\mathcal C_b^s$.
Hence
\[\#\mathcal Q_k(\cdot,b)\leq e^{(h(\mu)+\varepsilon)(k-b) }e^{H_0b}
\ \text{ for all }b\in\{N_1,\ldots,k-N_0\},\]
and so
\begin{equation}\label{cardi-eq2}\begin{split}\sum_{b=\lfloor4\varepsilon k/h_0\rfloor}^{k-N_0}\#\mathcal Q_{k}(\cdot,b)&\leq e^{(h(\mu)+\varepsilon)k }\sum_{b=\lfloor4\varepsilon k/h_0\rfloor}^ke^{-b(h(\mu)+\varepsilon-H_0  ) }\\
&\leq e^{(h(\mu)+\varepsilon)k }\sum_{b=\lfloor4\varepsilon k/h_0\rfloor}^ke^{-bh_0 }\leq
e^{(h(\mu)-2\varepsilon)k },\end{split}\end{equation}
provided $k$ is sufficiently large.

Finally, since $\Sigma$ is a subshift over the finite alphabet $A$ as in $\S\ref{nice}$, it is clear that
 \begin{equation}\label{cardi-eq3}\#\mathcal Q_{k}(\cdot,b)<(\#A)^{N_0}e^{H_0 k}\ \text{ for all }b\in\{k-N_0+1,\ldots,k\}.\end{equation}
Combining \eqref{cardi-eq1}, \eqref{cardi-eq2}, \eqref{cardi-eq3}
and the lower bound in \eqref{meas-eq2},
for all sufficiently large $k\geq N_2$
we obtain
\[\#\mathcal Q_k-\sum_{a=\lfloor5\varepsilon k/h_0\rfloor}^{k}\#\mathcal Q_{k}(a,\cdot)-\sum_{b=\lfloor4\varepsilon k/h_0\rfloor}^{k}
\#\mathcal Q_{k}(\cdot,b)\geq\frac{\mu(\Sigma_0)}{2}e^{(h(\mu)-\varepsilon)k},\]
which implies the desired inequality.
\end{proof}

From Lemma~\ref{empty-lem-new}, for all sufficiently large $k\geq N_2$
there exist $a_*$, $b_*\in\mathbb N\cup\{0\}$
such that 
 $a_*+b_*\leq9\varepsilon k/h_0$ and
\begin{equation}\label{low-h}\#\mathcal Q_{k}(a_*,b_*)\geq\frac{1}{(9\varepsilon k/h_0+1)^2}e^{(h(\mu)-\varepsilon)k}.\end{equation}
We set
\[\tilde{\mathcal Q}_k=\mathcal Q_{k}^c(a_*,b_*),\]
and
put
\[C=4+\frac{72}{h_0}\sup_{i\in\{1,\ldots,m\}}\left\|f_i\right\|_{C^0}.\]
\begin{lemma}\label{summary}For all sufficiently large $k\geq N_2$ we have
 \begin{equation}\label{m-eq1}
\#\tilde{\mathcal Q}_k\geq e^{(h(\mu)-\varepsilon-10\varepsilon H_0/h_0)k},
\end{equation}
and 
\begin{equation}\label{m-eq2}\max_{ x\in [v]}\max_{i\in\{1,\ldots,m\}}\left|\frac{1}{|v|}S_{|v|}f_i(x)- \int f_i{\rm d}\mu\right| < C\varepsilon\ \text{ for all $v\in\tilde{\mathcal Q}_k$.}\end{equation}
\end{lemma}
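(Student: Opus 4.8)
The plan is to prove the two estimates \eqref{m-eq1} and \eqref{m-eq2} separately, each for all sufficiently large $k$. I will use repeatedly that $\tilde{\mathcal Q}_k=\mathcal Q_k^c(a_*,b_*)$ is the image of $\mathcal Q_k(a_*,b_*)$ under the map $w\mapsto c(w)$, that $a_*+b_*\le 9\varepsilon k/h_0$, that every $v\in\tilde{\mathcal Q}_k$ has $|v|=k-a_*-b_*$, and that $|v|>k/2$ since $\varepsilon<h_0/18$ by \eqref{epsilon-def}. Recall also that $H_0\in(H,h(\mu))$ forces $H_0>0$ and $h_0=h(\mu)-H_0>0$.

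For \eqref{m-eq1} I would bound the fibers of $w\mapsto c(w)$. If two words $w,w'\in\mathcal Q_k(a_*,b_*)$ satisfy $c(w)=c(w')$, then $w=p(w)\,c(w)\,s(w)$ is determined by the pair $(p(w),s(w))\in\mathcal C_{a_*}^p\times\mathcal C_{b_*}^s$, so each fiber has at most $\#\mathcal C_{a_*}^p\cdot\#\mathcal C_{b_*}^s$ elements. Combining the trivial bound $\#\mathcal C_n^p,\#\mathcal C_n^s\le(\#A)^n$ with \eqref{meas-eq0} gives $\#\mathcal C_n^p,\#\mathcal C_n^s\le(\#A)^{N_1}e^{H_0n}$ for every $n\ge0$, hence each fiber has at most $(\#A)^{2N_1}e^{H_0(a_*+b_*)}\le(\#A)^{2N_1}e^{9\varepsilon H_0k/h_0}$ elements. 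Dividing the lower bound \eqref{low-h} for $\#\mathcal Q_k(a_*,b_*)$ by this quantity yields $\#\tilde{\mathcal Q}_k\ge e^{(h(\mu)-\varepsilon)k}\big((9\varepsilon k/h_0+1)^2(\#A)^{2N_1}e^{9\varepsilon H_0k/h_0}\big)^{-1}$, and since $H_0,h_0>0$ the exponential factor $e^{\varepsilon H_0k/h_0}$ dominates the polynomial-times-constant denominator for all large $k$, giving \eqref{m-eq1}.

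For \eqref{m-eq2}, fix $v\in\tilde{\mathcal Q}_k$ and write $v=c(w)$ with $w\in\mathcal Q_k(a_*,b_*)$. Since $[w]\neq\emptyset$ and $w\in\mathcal Q_k$, pick $x'\in[w]\cap\Sigma_0$, so that $\sigma^{a_*}x'\in[v]$. Writing $S_kf_i(x')=S_{a_*}f_i(x')+S_{|v|}f_i(\sigma^{a_*}x')+S_{b_*}f_i(\sigma^{a_*+|v|}x')$, the two boundary terms have absolute value at most $(a_*+b_*)\|f_i\|_{C^0}\le(9\varepsilon k/h_0)\|f_i\|_{C^0}$; inserting \eqref{meas-eq1} for $S_kf_i(x')$ and then replacing $k$ by $|v|$ (at an extra cost $\le(a_*+b_*)|\int f_i\,{\rm d}\mu|\le(9\varepsilon k/h_0)\|f_i\|_{C^0}$) gives $\big|S_{|v|}f_i(\sigma^{a_*}x')-|v|\int f_i\,{\rm d}\mu\big|<\frac{\varepsilon k}{2}+\frac{18\varepsilon}{h_0}\|f_i\|_{C^0}k$. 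Dividing by $|v|$ and using $k/|v|<2$ bounds $\big|\frac{1}{|v|}S_{|v|}f_i(\sigma^{a_*}x')-\int f_i\,{\rm d}\mu\big|$ by $\varepsilon+\frac{36\varepsilon}{h_0}\|f_i\|_{C^0}$. An arbitrary $x\in[v]$ agrees with $\sigma^{a_*}x'$ on the first $|v|$ coordinates, so by uniform continuity of the $f_i$ (the oscillation estimate underlying the passage to \eqref{meas-eq1}) one has $\frac{1}{|v|}\big|S_{|v|}f_i(x)-S_{|v|}f_i(\sigma^{a_*}x')\big|<\varepsilon$ once $k$, hence $|v|>k/2$, is large enough. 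Summing, $\big|\frac{1}{|v|}S_{|v|}f_i(x)-\int f_i\,{\rm d}\mu\big|<2\varepsilon+\frac{36\varepsilon}{h_0}\sup_i\|f_i\|_{C^0}<C\varepsilon$, which is \eqref{m-eq2}.

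I do not expect a genuine obstacle here: once the decomposition $w=p(w)c(w)s(w)$ and the bound $a_*+b_*\le 9\varepsilon k/h_0$ are in hand, everything reduces to bookkeeping with the estimates \eqref{meas-eq0}, \eqref{size}, \eqref{meas-eq1} and \eqref{low-h} established above. The one point that needs slight care is that \eqref{m-eq2} must hold for every $x\in[v]$, not merely for the distinguished point $\sigma^{a_*}x'$; this forces the uniform-continuity transfer, and it is precisely to absorb that oscillation error together with the crude prefix/suffix contributions that the constant $C=4+\frac{72}{h_0}\sup_i\|f_i\|_{C^0}$ is taken so generously.
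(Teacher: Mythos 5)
Your proof is correct and follows essentially the same route as the paper: for \eqref{m-eq1} you bound the fibers of $w\mapsto c(w)$ by $\#\mathcal C^p_{a_*}\#\mathcal C^s_{b_*}$ and combine \eqref{meas-eq0} with \eqref{low-h} (your explicit $(\#A)^{N_1}$ factor for the case $a_*,b_*<N_1$ is a harmless refinement absorbed into "sufficiently large $k$"), and for \eqref{m-eq2} you compare $S_{|v|}f_i$ at a distinguished point of $[uvw]$ with $S_kf_i$ using \eqref{meas-eq1} and $a_*+b_*\le 9\varepsilon k/h_0$, then transfer to arbitrary $x\in[v]$ by uniform continuity, exactly as in the paper. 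The constant bookkeeping ($k/|v|<2$ from $\varepsilon<h_0/18$, total error $<C\varepsilon$) also matches.
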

\begin{proof}
Since
$a_*+b_*\leq 9\varepsilon k/h_0<k$, we have
$\#\mathcal Q_{k}^c(a_*,b_*)\geq\#\mathcal Q_{k}(a_*,b_*)(\#\mathcal C^p_{a_*}\#\mathcal C^s_{b_*})^{-1}$. Then
\eqref{m-eq1} follows from 
\eqref{meas-eq0} and \eqref{low-h} provided $k\geq N_2$ is sufficiently large.
Let $v\in\tilde{\mathcal Q}_k$.
There exist $u\in\mathcal C^p_{a_*}$,
$w\in\mathcal C^s_{b_*}$
such that $uvw\in\mathcal Q_{k}(a_*,b_*)$, and so $|v|=k-a_*-b_*$.
Take a point
$y\in [uvw]$. For all $i\in\{1,\ldots,m\}$ we have
\[|S_{|v|}f_i(\sigma^{a_*}y)-S_kf_i(y)|\leq(a_*+b_*)\|f_i\|_{C^0}\leq\frac{9\varepsilon k}{h_0 }\|f_i\|_{C^0}.\]
By \eqref{meas-eq1}, we have
\[\frac{k-|v|}{|v|}
\int f_i{\rm d}\mu-\frac{k\varepsilon}{|v|}\leq\frac{S_kf_i(y)}{|v|}-\int f_i{\rm d}\mu\leq\frac{k-|v|}{|v|}
\int f_i{\rm d}\mu+\frac{k\varepsilon}{|v|}.\]
Therefore
\begin{equation}\label{combination1}\begin{split}\left|\frac{1}{|v|}S_{|v|}f_i(\sigma^{a_*}y)-\int f_i{\rm d}\mu\right|&<\frac{k\varepsilon}{|v|}+\frac{k-|v|}{|v|}\| f_i\|_{C^0}+\frac{9\varepsilon k}{h_0|v|}\|f_i\|_{C^0}\\
&\leq\frac{\varepsilon}{1-9\varepsilon /h_0}\left(1+\frac{18}{h_0}\|f_i\|_{C^0}\right)
< \frac{C\varepsilon}{2}.\end{split}\end{equation}
To deduce the second inequality we have used $k/|v|\leq k/(k-9\varepsilon k/h_0)=1/(1-9\varepsilon /h_0)$.
The last inequality follows from $\varepsilon<h_0/18.$
Meanwhile, by the uniform continuity of $f_i$, if $k$ is sufficiently large then for all $x\in[v]$ and all $i\in\{1,\ldots,m\}$ we have
\begin{equation}\label{combination2}\left|\frac{1}{|v|}S_{|v|}f_i(x)-\frac{1}{|v|}S_{|v|}f_i(\sigma^{a_*}y)\right|<\frac{C\varepsilon}{2}.\end{equation}
Combining \eqref{combination1} and \eqref{combination2} we obtain \eqref{m-eq2}.
\end{proof}

\medskip

\medskip

\noindent {\it Step~3: The construction of a subshift in $\Sigma$.}
Let $t\geq0$ be a gap size for the (W)-specification of $\mathcal G^1$.
If $\Sigma$ is one-sided, then
define
\[\Gamma=\{v^{(1)}w^{(1)}v^{(2)}w^{(2)}v^{(3)}w^{(3)}\cdots\in\Sigma\colon
v^{(j)}\in\tilde{\mathcal Q}_k, |w^{(j)}|\leq t\  \text{ for all } j\in\mathbb N\}.\]
If $\Sigma$ is two-sided, then
define
\[\Gamma=\{x=\cdots w^{(-1)}v^{(0)}w^{(0)}v^{(1)}\cdots\in\Sigma\colon 
v^{(j)}\in\tilde{\mathcal Q}_k, |w^{(j)}|\leq t\  \text{ for all } j\in\mathbb Z, [x_0]\supset[v_0]\}.\]
In other words, $\Gamma$ is the set of points in $\Sigma$ whose symbol sequences are alternate concatenations of elements of $\tilde{\mathcal Q}_k$ and that of $\mathcal L(\Sigma)$ with word lengths not exceeding $t$. The set
\[\tilde\Gamma=\overline{\bigcup_{n=0}^\infty\sigma^{n}\Gamma}\] is a subshift in $\Sigma$ over the same alphabet $A$.
Since $0<\mu(\Sigma_0)<1$, $\tilde\Gamma$ is a proper subset of $\Sigma$.
Define a function $f\colon\Sigma\to\mathbb R$ by \[f(x)=-\min\{d(x,y)\colon y\in\tilde\Gamma\},\]
where $d$ denotes the Hamming metric on $\Sigma$ (see $\S\ref{nice}$).
\begin{lemma}\label{Lipschitz-lem}$f$ is Lipschitz continuous.\end{lemma}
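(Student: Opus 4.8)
The plan is to show that $f(x)=-\min\{d(x,y)\colon y\in\tilde\Gamma\}$ is Lipschitz with respect to the Hamming metric $d$, which — since $\tilde\Gamma$ is a fixed nonempty closed set — is really just the statement that the distance function $x\mapsto \operatorname{dist}(x,\tilde\Gamma)$ is $1$-Lipschitz. First I would observe that $\tilde\Gamma$ is nonempty (it contains $\Gamma$, which is nonempty because $\tilde{\mathcal Q}_k$ is nonempty by \eqref{m-eq1} and $\mathcal G^1$ has (W)-specification, so the alternating concatenations live in $\Sigma$) and closed (it is defined as a closure), so the minimum defining $f$ is attained and $f$ is well defined and finite-valued.

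The key step is the triangle inequality argument. Fix $x,x'\in\Sigma$ and pick $y\in\tilde\Gamma$ realizing $\operatorname{dist}(x',\tilde\Gamma)=d(x',y)$. Then
\[
-f(x)=\operatorname{dist}(x,\tilde\Gamma)\leq d(x,y)\leq d(x,x')+d(x',y)=d(x,x')-f(x'),
\]
so $f(x')-f(x)\leq d(x,x')$; by symmetry $|f(x)-f(x')|\leq d(x,x')$. This already gives that $f$ is $1$-Lipschitz for $d$, hence Lipschitz continuous, which is all the lemma claims. If one wants to emphasize the symbolic content, one can note that $d(x,x')=2^{-n}$ where $n=\min\{|i|\colon x_i\neq x_i'\}$, so the Lipschitz bound says $|f(x)-f(x')|\leq 2^{-n}$ whenever $x$ and $x'$ agree on coordinates $-n{+}1,\dots,n{-}1$ (one-sided: coordinates $1,\dots,n$); this is the concrete form used later when invoking Hölder/Lipschitz hypotheses of Theorem~\ref{CT-thm}.

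There is essentially no obstacle here: the only things to check are that $\tilde\Gamma\neq\emptyset$ and that $d$ is a genuine metric (both already established — $d$ is introduced as a metric in $\S\ref{nice}$, and nonemptiness of $\Gamma$ follows from the construction in Step~3 together with (W)-specification of $\mathcal G^1$). The mild point worth a sentence is that the minimum in the definition of $f$ exists: $\Sigma$ is compact and $\tilde\Gamma$ is closed, hence compact, so $y\mapsto d(x,y)$ attains its infimum on $\tilde\Gamma$. I would write the proof as: (1) $f$ well defined, (2) the two-line triangle-inequality computation, (3) conclude $1$-Lipschitz, optionally restating in terms of agreement of coordinates.
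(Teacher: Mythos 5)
Your proof is correct, and it is a genuinely (if mildly) different argument from the paper's. You prove the general fact that the distance function to a fixed nonempty closed subset of a metric space is $1$-Lipschitz, via the triangle inequality for $d$ applied to a point $y\in\tilde\Gamma$ realizing $\operatorname{dist}(x',\tilde\Gamma)$; this works for any metric and even gives the sharper constant $1$. The paper instead argues directly from the dyadic (in fact ultrametric) structure of the Hamming metric: writing $d(x,y)=2^{-i_0}$, $f(x)=-2^{-i_1}$, $f(y)=-2^{-i_2}$, it observes that $i_1\leq i_0$ or $i_2\leq i_0$ forces $i_1=i_2$, and otherwise $|2^{-i_1}-2^{-i_2}|<2^{-i_0+1}$, yielding Lipschitz constant $2$ without ever invoking the triangle inequality or the attainment of the minimum in any essential way. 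What your route buys is generality and a cleaner constant; what the paper's buys is independence from metric-space boilerplate (no need to single out a nearest point, and the case $x\in\tilde\Gamma$ is absorbed by convention). Your side remarks are fine: the minimum is attained because $\tilde\Gamma$ is a closed subset of the compact space $\Sigma$, and $d$ is indeed a metric (an ultrametric), so the triangle inequality you use is available; the nonemptiness of $\tilde\Gamma$, which you justify from the construction in Step~3, is needed for $f$ to be well defined under either argument.
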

\begin{proof}Let $x,y$ be distinct points in $\Sigma$.
Take $i_0,i_1,i_2\in\mathbb N$ such that
$d(x,y)=2^{-i_0}$, $f(x)=-2^{-i_1}$, $f(y)=-2^{-i_2}$.
If $i_1\leq i_0$ or $i_2\leq i_0$, then $i_1=i_2$ and so
$f(x)-f(y)=0$. Otherwise, $|f(x)-f(y)|= |2^{-i_1}-2^{-i_2}|< 2^{-i_0+1}=2d(x,y)$.
\end{proof}
Lemma~\ref{Lipschitz-lem} implies
\begin{equation}\label{distortion}\sup_{n\geq1}\max_{w\in\mathcal L_n(\Sigma)}\max_{x,y\in[w]}(S_nf(x)-S_nf(y))<\infty.\end{equation}
Clearly we have 
$\Lambda_\sigma(f)=0$,
and hence
any $f$-maximizing measure belongs to $M(\tilde\Gamma,\sigma|_{\tilde\Gamma})$.

Let $x\in\Gamma$. We have
 $x=v^{(1)}w^{(1)}v^{(2)}w^{(2)}\cdots$, or
$x=\cdots w^{(-1)}v^{(0)}w^{(0)}v^{(1)}\cdots$
and $[x_0]\supset [v_0]$ according as $\Sigma$ is one-sided or two-sided, where 
$v^{(j)}\in\tilde{\mathcal Q}_k$ and $|w^{(j)}|\leq t$.
From \eqref{m-eq2}, for all $j$ 
we have
\[\begin{split}\max_{i\in\{1,\ldots,m\}}\left|\int S_{|v^{(j)}w^{(j)}|}f_i(x)-|v^{(j)}w^{(j)}|\int f_i{\rm d}\mu\right|&<C|v^{(j)}|\varepsilon+2|w^{(j)}|\|f_i\|_{C^0}\\&<2C|v^{(j)}w^{(j)}|\varepsilon.\end{split}\]
The last inequality holds for all sufficiently large $k$. It follows that there exists $N_3\geq1$ that is independent of $x\in\Gamma$ such that
for all $n\geq N_3$ we have
\[\left|\frac{1}{n}S_nf_i(x)-\int f_i{\rm d}\mu\right|<3C\varepsilon.\]
Passing to the limit, we obtain the same estimate for all $x\in\tilde\Gamma$ and all sufficiently large $n\geq N_3$.
Consequently, for any measure $\nu\in M(\tilde\Gamma,\sigma|_{\tilde\Gamma})$ we have
\[\max_{i\in\{1,\ldots,m\}}\left|\int f_i{\rm d}\nu-\int f_i{\rm d}\mu\right|\leq3C\varepsilon.\]
Since the collection $f_1,\ldots,f_m$ of functions in $C(\Sigma)$ is arbitrary and $\varepsilon>0$ can be chosen arbitrarily small, one can choose them so that  $M(\tilde\Gamma,\sigma|_{\tilde\Gamma})$ is contained in an arbitrarily small neighborhood of $\mu$ in $M(\Sigma,\sigma)$. 
\medskip

\noindent{\it Step~4: The verification of  (i) (ii) (iii).}
If $k$ is sufficiently large, then \eqref{m-eq1} implies
\begin{equation}\label{meas-eq3}h_{\rm top}(\tilde\Gamma)> h(\mu)-2\varepsilon-10\varepsilon H_0/h_0>\Hc.\end{equation}

Let $\nu_0$ be an ergodic measure of maximal entropy for the subshift $\tilde\Gamma$. Note that
\[\limsup_{n\to\infty}\sup_{x\in\Sigma}\frac{1}{n}S_n f(x)=\sup_{x\in\Sigma}\limsup_{n\to\infty}\frac{1}{n}S_nf(x)=\Lambda_\sigma(f)=0.\]
Combining this with \eqref{meas-eq3}, for all $\beta\geq0$ we get
\[\begin{split}h(\mathcal C^p\cup\mathcal C^s)+\beta\cdot\limsup_{n\to\infty}\sup_{x\in\Sigma}\frac{1}{n}S_nf(x)
&=H<h_{\rm top}(\tilde\Gamma)\\
&=h(\nu_0)=h(\nu_0)+\int \beta f{\rm d}\nu_0\leq P(\beta f).\end{split}\]
This estimate implies 
\[\sum_{n=1}^\infty\Lambda_n(\mathcal C^p\cup\mathcal C^s,\beta f)e^{-P(\beta f)n}<\infty.\]
By Theorem~\ref{CT-thm}, the equilibrium state for the potential $\beta f$ is unique, denoted by $\mu_{\beta f}$, and has the weak Gibbs property \eqref{w-Gibbs}. This verifies (i).

The continuity of the map $\beta\in[0,\infty)\mapsto\mu_{\beta f}\in M(\Sigma,\sigma)$ follows from the next general lemma. \begin{lemma}\label{continuity-lem}
Let $X$ be a subshift,
 let $\varphi\in C(X)$ and suppose that the equilibrium state for the potential $\varphi$ is unique, denoted by $\lambda_\varphi$.
  Let $(\lambda_n)_{n\in\mathbb N}$ be a sequence in $M(X,\sigma)$ and let $(\varphi_n)_{n\in\mathbb N}$ be a sequence in $C(X)$ such that:

  \begin{itemize}
 \item[(a)] $\|\varphi_n-\varphi\|_{C^0}\to0$ as $n\to\infty$.

\item[(b)] For each $n\in\mathbb N$, $\lambda_n$ is an equilibrium state for the potential $\varphi_n$. 
  \end{itemize}
  Then
 $\lambda_n\to\lambda_{\varphi}$ as $n\to\infty$ in the weak* topology of  $M(X,\sigma)$.
 \end{lemma}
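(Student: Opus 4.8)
The plan is to argue by compactness of $M(X,\sigma)$ in the weak* topology together with upper semicontinuity of the entropy map (valid since $X$ is expansive), reducing the claim to showing that every weak* accumulation point of $(\lambda_n)_{n\in\mathbb N}$ is an equilibrium state for $\varphi$; uniqueness then forces the whole sequence to converge to $\lambda_\varphi$. First I would fix a subsequence $(\lambda_{n_j})_j$ converging weak* to some $\lambda\in M(X,\sigma)$, and show $\lambda$ is an equilibrium state for $\varphi$. The two ingredients are: (1) the pressure function $\psi\in C(X)\mapsto P(\psi)$ is continuous with respect to $\|\cdot\|_{C^0}$ — in fact $1$-Lipschitz, since $|P(\psi_1)-P(\psi_2)|\leq\|\psi_1-\psi_2\|_{C^0}$ is immediate from the definition of $\Lambda_n$; and (2) the map $\nu\mapsto\int\varphi_n\,{\rm d}\nu$ converges, uniformly in $\nu$, to $\nu\mapsto\int\varphi\,{\rm d}\nu$ because $\|\varphi_n-\varphi\|_{C^0}\to0$.

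Carrying this out: for each $j$, $\lambda_{n_j}$ being an equilibrium state for $\varphi_{n_j}$ means $h(\lambda_{n_j})+\int\varphi_{n_j}\,{\rm d}\lambda_{n_j}=P(\varphi_{n_j})$. Taking $\limsup$ as $j\to\infty$: the left-hand side has $\limsup h(\lambda_{n_j})\leq h(\lambda)$ by upper semicontinuity, while $\int\varphi_{n_j}\,{\rm d}\lambda_{n_j}\to\int\varphi\,{\rm d}\lambda$ because $\bigl|\int\varphi_{n_j}\,{\rm d}\lambda_{n_j}-\int\varphi\,{\rm d}\lambda_{n_j}\bigr|\leq\|\varphi_{n_j}-\varphi\|_{C^0}\to0$ and $\int\varphi\,{\rm d}\lambda_{n_j}\to\int\varphi\,{\rm d}\lambda$ by weak* convergence. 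The right-hand side $P(\varphi_{n_j})\to P(\varphi)$ by the Lipschitz estimate in (1). Hence $h(\lambda)+\int\varphi\,{\rm d}\lambda\geq P(\varphi)$, and the reverse inequality is the variational principle, so $\lambda$ is an equilibrium state for $\varphi$. By the uniqueness hypothesis, $\lambda=\lambda_\varphi$. Since every weak* accumulation point of the sequence equals $\lambda_\varphi$ and $M(X,\sigma)$ is weak* compact (hence sequentially compact, being metrizable), the full sequence $(\lambda_n)_{n\in\mathbb N}$ converges weak* to $\lambda_\varphi$.

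I do not anticipate a serious obstacle here; the lemma is a standard semicontinuity-plus-uniqueness argument. The only point requiring mild care is bookkeeping the two sources of error — the perturbation $\varphi_n\to\varphi$ in the integral term and the weak* convergence $\lambda_{n_j}\to\lambda$ — which is handled cleanly by the triangle-inequality split above, and noting that upper semicontinuity of entropy (used in the form $\limsup_j h(\lambda_{n_j})\leq h(\lambda)$) is available because expansiveness of $(\Sigma,\sigma)$, already invoked in $\S\ref{TF}$, applies equally to the subshift $X$.
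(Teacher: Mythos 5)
Your argument is correct and is essentially the paper's proof: both pass to a weak* limit point $\lambda$, use the Lipschitz continuity of the pressure in the potential, the convergence $\int\varphi_n\,{\rm d}\lambda_n\to\int\varphi\,{\rm d}\lambda$, and upper semicontinuity of entropy to get $h(\lambda)+\int\varphi\,{\rm d}\lambda\geq P(\varphi)$, then invoke uniqueness of the equilibrium state. Your write-up merely makes explicit the subsequence/compactness bookkeeping that the paper leaves implicit.
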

 \begin{proof}
 Let $\lambda$ denote any limit point of $(\lambda_n)_{n\in\mathbb N}$ in the weak* topology of $M(X,\sigma)$. Since the pressure continuously depends on the potential and the entropy function is upper semicontinuous, the map $\lambda_n\in M(X,\sigma)\mapsto h(\lambda_n)+\int\varphi_n{\rm d}\lambda_n$ is upper 
 semicontinuous. So, we have
 \[P(\varphi)=\lim_{n\to\infty}\left(h(\lambda_n)+\int\varphi_n{\rm d}\lambda_n\right)\leq h(\lambda)+\int\varphi{\rm d}\lambda,\]
 and therefore $\lambda=\lambda_\varphi$, which completes the proof.
 \end{proof}To prove the injectivity of the map $\beta\in[0,\infty)\mapsto\mu_{\beta f}\in M(\Sigma,\sigma)$,
let $0\leq \beta_1<\beta_2$ and suppose $\mu_{\beta_1f}=\mu_{\beta_2f}$ by contradiction.
Since $f$ is non-positive and non-constant, we have $P(\beta_1f)>P(\beta_2f)$.
By the weak Gibbs property \eqref{w-Gibbs} of $\mu_{\beta_1f}$, $\mu_{\beta_2f}$
and \eqref{distortion},
there exists a constant $K\geq1$ such that for all $n\geq1$,
 $w\in\mathcal G\cap\mathcal L_n(\Sigma)$ and $x\in[w]$ we have
\[K^{-1}\leq \frac{\exp(-P(\beta_1f)n+\beta_1S_nf(x))}{\exp(-P(\beta_2f)n+\beta_2S_nf(x))}\leq K.\]
This is equivalent to
\begin{equation}\label{meas-eq30}|(\beta_1-\beta_2)S_nf(x)-n(P(\beta_1f)-P(\beta_2f))|\leq C,\end{equation}
for some constant $C>0$ independent of $n$, $w$, $x$. 
Since $\nu_0\in M(\tilde\Gamma,\sigma|_{\tilde\Gamma})$ is ergodic, has positive entropy and satisfies 
$\int f{\rm d}\nu_0=0$, in the same way as the construction of the collection of words $\tilde{\mathcal Q}_k=\tilde{\mathcal Q}_k(\mu)$ in \eqref{Hk},
for any $\delta>0$ one can construct  $\tilde{\mathcal Q}_k=\tilde{\mathcal Q}_k(\nu_0)$ to conclude that
for all sufficiently large $n\geq1$ there exists  $w\in\mathcal G\cap \mathcal L_n(\Sigma)$ such that  
$\sup_{x\in[w]}|(1/n)S_nf(x)|<\delta$.
This and \eqref{meas-eq30} together imply
$P(\beta_1f)=P(\beta_2f)$, a contradiction.
This verifies (ii).

By Lemma~\ref{zero-t-lem},
any ground state for the potential $f$ belongs to $M_{\rm max}(f)$, and so belongs to $M(\tilde\Gamma,\sigma|_{\tilde\Gamma })$.
Lemma~\ref{zero-t-lem} also yields
 $\lim_{\beta\to\infty}h(\mu_{\beta f})= h(\nu_0)=h_{\rm top}(\tilde\Gamma)>\Hc$. This together with the consequence of the last paragraph in Step~3 
 verifies (iii).
The proof of Proposition~\ref{hyp-lem2}(a) is complete.
 \end{proof}

\begin{proof}[Proof of Proposition~\ref{hyp-lem2}(b)]
Continuing from the proof of Proposition~\ref{hyp-lem2}(a),
let $\mu\in M(\Sigma,\sigma)$ be ergodic and satisfy $h(\mu)>\Hc$.
Pick a word $v$ from the collection $\tilde{\mathcal Q}_k=\mathcal Q_k(\mu)$ in \eqref{Hk}.
If $\Sigma$ is one-sided, define
\[\Omega=\{vw^{(1)}vw^{(2)}vw^{(3)}\cdots\in\Sigma\colon|w^{(j)}|\leq t\  \text{ for all } j\geq1\}.\]
If $\Sigma$ is two-sided, define
\[\Omega=\{x=\cdots vw^{(-2)}vw^{(-1)}vw^{(0)}vw^{(1)}v\cdots\in\Sigma\colon|w^{(j)}|\leq t\  \text{ for all }j\in\mathbb Z, [x_0]\supset [v]\}.\]
In other words, $\Omega$ is the set of points in $\Sigma$ whose symbol sequences are alternate concatenations of $v$ and elements of $\mathcal L(\Sigma)$ with word lengths not exceeding $t$. The set
\[\tilde\Omega=\overline{\bigcup_{n=0}^\infty\sigma^{n}\Omega}\] is a subshift in $\Sigma$ over the same alphabet $A$. 
Since the choice of the collection $\{f_1,\ldots,f_m\}\subset C(\Sigma)$ in Step~$1$ is arbitrary and $\varepsilon>0$ can be chosen arbitrarily small, one can choose them so that $M(\tilde\Omega,\sigma|_{\tilde\Omega})$ is contained in an arbitrarily small neighborhood of $\mu$ in $M(\Sigma,\sigma)$.
Since the gap size $t$ for the specification of $\mathcal G^1$ is independent of $k$, we have
$h_{\rm top}(\tilde\Omega)\to0$ as $k\to\infty$. From this and the variational principle for the subshift $\tilde\Omega$, it follows that the neighborhood of $\mu$ contains a shift-invariant ergodic measure with arbitrarily small entropy.
The proof of Proposition~\ref{hyp-lem2}(b) is complete.\end{proof}

\section{On the proofs of Theorems A and B}
We are almost ready to proceed to proving the main results on ergodic optimization.
After recalling some functional analytic ingredients
in $\S\ref{FA}$, we 
 complete the proof of Theorem~A in $\S\ref{pfthm-a}$.
 In $\S\ref{pf-newsec}$ we complete the proof of Theorem~B.
\subsection{Functional analysis}\label{FA}
The proof of part (b) of Theorem~A relies on the result of
Israel \cite[Section~V]{Isr79}, who proved an approximation theorem about tangent functionals to convex functions, and used it  for lattice systems in statistical mechanics to prove the existence of a dense set of continuous interactions for which there exist uncountably many ergodic equilibrium states. 
Below we recall his result, and some other auxiliary ones.

For a Banach space $V$ with a norm $\|\cdot\|$,
let $V^*$ denote 
the set of real-valued bounded linear functionals on $V$. For each $\mu\in V^*$
let $\|\mu\|$ denote the norm
\[\|\mu\|=\sup\left\{|\mu(f)|\colon f\in V,\ \|f\|=1\right\}.\] Let $\Lambda$, $\mu\in V^*$. We say:
\begin{itemize}

\item 
$\mu$ is {\it tangent} to $\Lambda$ at $f\in V$ if $\mu(f)\leq\Lambda(f+g)-\Lambda(f)$ holds for all $g\in V$.

\item 
 $\mu$ is {\it bounded} by $\Lambda$ if $\mu(f)\leq\Lambda(f)$  holds for all $f\in V$.

\item 
 $\Lambda$ is {\it convex} if 
 $\Lambda(tf+(1-t)g)\leq t\Lambda(f)+(1-t)\Lambda(g)$ holds for all $f$, $g\in V$ and $t\in[0,1]$.
\end{itemize}
\begin{thm}[\cite{Isr79}, Theorem~V.1.1]\label{bis}
Let $V$ be a Banach space and let $\Lambda\in V^*$ be convex and continuous.
For any $\mu_0\in V^*$ that is bounded by $\Lambda$, any $f_0\in V$ and any $\varepsilon>0$, there exist $\mu\in V^*$ and $f\in V$ such that $\mu$ is tangent to $\Lambda$ at $f$ and
\[\|\mu-\mu_0\|\leq\varepsilon\ \text{ and }\
\|f-f_0\|\leq\frac{1}{\varepsilon}(\Lambda(f_0)-\mu_0(f_0)+s),\]
where $s=\sup\{\mu_0(g)-\Lambda(g)\colon g\in V\}\leq0$.
\end{thm}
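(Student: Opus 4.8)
The statement is a quantitative version of the Bishop--Phelps theorem on density of support functionals, and I would deduce it from Ekeland's variational principle together with a Hahn--Banach argument. (Here $\Lambda\colon V\to\mathbb R$ is meant to be a convex continuous \emph{function}, and ``$\mu$ is tangent to $\Lambda$ at $f$'' reads $\mu(g)\le\Lambda(f+g)-\Lambda(f)$ for all $g\in V$, i.e.\ $\mu$ belongs to the subdifferential $\partial\Lambda(f)$.) The first move is to translate the hypotheses into convex analysis. Put $F=\Lambda-\mu_0\colon V\to\mathbb R$, which is convex and continuous; the hypothesis that $\mu_0$ is bounded by $\Lambda$ says precisely that $F\ge 0$. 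Hence $F$ is bounded below with $\inf_V F=-\sup_{g\in V}(\mu_0(g)-\Lambda(g))=-s\ge 0$, while $F(f_0)=\Lambda(f_0)-\mu_0(f_0)$, so that $\eta:=F(f_0)-\inf_V F=\Lambda(f_0)-\mu_0(f_0)+s$ is nonnegative and equals the numerator appearing in the asserted bound.

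Next, since $V$ is complete and $F$ is continuous, bounded below, and satisfies $F(f_0)\le\inf_V F+\eta$, Ekeland's variational principle (in its two-parameter form, with error level $\eta$ and slope $\varepsilon$) produces a point $f\in V$ with $\|f-f_0\|\le\eta/\varepsilon$ such that $g\mapsto F(g)+\varepsilon\|g-f\|$ attains its minimum at $g=f$; equivalently, $\Lambda(f+h)-\Lambda(f)\ge\mu_0(h)-\varepsilon\|h\|$ for all $h\in V$. Replacing $h$ by $th$ and letting $t\downarrow0$, and using the monotonicity of difference quotients of a convex function, the sublinear directional derivative $q(h):=\inf_{t>0}t^{-1}\bigl(\Lambda(f+th)-\Lambda(f)\bigr)$ of $\Lambda$ at $f$ satisfies $q(h)\ge\mu_0(h)-\varepsilon\|h\|$. (If one prefers to avoid invoking Ekeland's principle, the same $f$ can be obtained by the classical Bishop--Phelps maximal-element argument, which only uses completeness of $V$.)

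Then I would extract the tangent functional. Since $\partial\Lambda(f)=\{\mu\in V^*:\mu\le q\}$, it suffices to find a linear $\mu$ with $\mu\le q$ and $\|\mu-\mu_0\|\le\varepsilon$. Form the infimal convolution $p(h)=\inf_{h=h_1+h_2}\bigl(q(h_1)+\mu_0(h_2)+\varepsilon\|h_2\|\bigr)$; it is positively homogeneous and subadditive, is dominated both by $q$ (take $h_2=0$) and by $\mu_0+\varepsilon\|\cdot\|$ (take $h_1=0$), and, using $q(h_1)\ge\mu_0(h_1)-\varepsilon\|h_1\|$ together with $\|h_2\|\ge\|h_1\|-\|h\|$, is bounded below by $\mu_0(h)-\varepsilon\|h\|$, hence real-valued. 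By Hahn--Banach there is a linear functional $\mu$ with $\mu\le p$. Now $\mu\le q$ gives $\mu(h)\le\Lambda(f+h)-\Lambda(f)$, so $\mu$ is tangent to $\Lambda$ at $f$; and $\mu\le\mu_0+\varepsilon\|\cdot\|$, evaluated at $h$ and at $-h$, gives $|\mu(h)-\mu_0(h)|\le\varepsilon\|h\|$, whence $\mu\in V^*$ and $\|\mu-\mu_0\|\le\varepsilon$. This proves the theorem.

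The step I expect to be the real obstacle is the last one: converting the purely metric near-minimality of $f$ delivered by the variational principle into a genuine linear tangent functional close to $\mu_0$. The device that makes this work is the infimal convolution of the directional-derivative functional $q$ with the shifted norm $\mu_0+\varepsilon\|\cdot\|$, and the one nontrivial point is its properness (that it never equals $-\infty$), which is exactly what the near-minimality provides and without which Hahn--Banach is unavailable. The remaining ingredients --- that $q$ is well defined and sublinear, and that $\partial\Lambda(f)$ is described as $\{\mu\le q\}$ --- are standard facts about finite convex functions on Banach spaces.
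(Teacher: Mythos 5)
Your argument is correct; the one thing to be aware of is that the paper does not prove this statement at all --- it is quoted directly from Israel's book (Theorem~V.1.1), so there is no in-paper proof to compare against. Your route (apply Ekeland's variational principle to $F=\Lambda-\mu_0$ with error level $\eta=\Lambda(f_0)-\mu_0(f_0)+s$ and slope $\varepsilon$, then produce the tangent functional by Hahn--Banach applied to the inf-convolution of the directional derivative $q$ with $\mu_0+\varepsilon\|\cdot\|$) is the standard modern proof, and it is essentially equivalent to Israel's original one: he obtains the point $f$ by a Bishop--Phelps cone/Zorn maximality argument (you note this alternative yourself) and then extracts $\mu$ by a Hahn--Banach sandwich between the convex function $q$ and the concave function $\mu_0-\varepsilon\|\cdot\|$; your inf-convolution $p$ is just a compact way of writing that sandwich, with properness of $p$ playing exactly the role of the non-crossing condition, and you correctly identify that as the only nontrivial point. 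The quantitative constants come out right: $\|f-f_0\|\le\eta/\varepsilon$ is precisely the asserted bound, and domination $\mu\le\mu_0+\varepsilon\|\cdot\|$ evaluated at $\pm h$ gives $\|\mu-\mu_0\|\le\varepsilon$ together with continuity of $\mu$. You also read the statement the intended way, silently repairing two notational slips in the paper ($\Lambda\in V^*$ should be ``$\Lambda\colon V\to\mathbb R$ convex and continuous,'' and the tangency definition's $\mu(f)$ should be $\mu(g)$). The only point left implicit is the degenerate case $\eta=0$, where Ekeland is not needed since $f=f_0$ already minimizes $F$ and the same inequality holds trivially.
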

For a continuous map $T$ of a compact metric space $X$, 
the functional $\Lambda_T$ on $C(X)$ given by \eqref{alphaT}
is convex and continuous.
The next lemma characterizes maximizing measures in terms of $\Lambda_T$.
Recall that 
 $C(X)^*$ can be identified with the set of (finite) signed Borel measures on $X$ by
 Riesz's representation theorem.

\begin{lemma}[\cite{Bre08}, Lemma~2.3]\label{Bre}
Let $T$ be a continuous map of a compact metric space $X$ and
let $f\in C(X)$. Then $\mu\in C(X)^*$ is tangent to $\Lambda_T$ at $f$ if and only if
$\mu$ belongs to $M(X,T)$ and is $f$-maximizing.
\end{lemma}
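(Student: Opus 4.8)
The plan is to verify both implications directly from the definition of tangency, leaning only on three elementary properties of the functional $\Lambda_T$ defined in \eqref{alphaT}: it is monotone ($g_1\le g_2$ pointwise implies $\Lambda_T(g_1)\le\Lambda_T(g_2)$); it absorbs constants, i.e.\ $\Lambda_T(f+c)=\Lambda_T(f)+c$ for $c\in\mathbb R$, since $\int c\,{\rm d}\nu=c$ for every $\nu\in M(X,T)$; and it is unchanged along coboundaries, i.e.\ $\Lambda_T(f+t(h\circ T-h))=\Lambda_T(f)$ for all $h\in C(X)$ and $t\in\mathbb R$, since $\int(h\circ T-h)\,{\rm d}\nu=0$ for $\nu\in M(X,T)$. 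Each of these is immediate from the definition of $\Lambda_T$ as a supremum over $M(X,T)$.

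For the ``if'' direction I would argue as follows: if $\mu\in M(X,T)$ is $f$-maximizing, then for every $g\in C(X)$ one has $\Lambda_T(f+g)\ge\int(f+g)\,{\rm d}\mu=\Lambda_T(f)+\mu(g)$, which is exactly the tangency inequality $\mu(g)\le\Lambda_T(f+g)-\Lambda_T(f)$.

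For the ``only if'' direction, assume $\mu\in C(X)^*$ is tangent to $\Lambda_T$ at $f$, and establish in turn that $\mu$ is a probability measure, that it is $T$-invariant, and that it is $f$-maximizing. Testing the tangency inequality with the constant functions $g\equiv c$ and $g\equiv -c$ for $c>0$ and using absorption of constants forces $c\,\mu(1)\le c$ and $-c\,\mu(1)\le -c$, hence $\mu(1)=1$. Testing with an arbitrary $g\le 0$ and using monotonicity gives $\mu(g)\le\Lambda_T(f+g)-\Lambda_T(f)\le 0$, so $\mu$ is a positive linear functional; being normalized, it is a Borel probability measure via the Riesz representation identification recalled just before the statement. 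Next, testing with $g=t(h\circ T-h)$ for fixed $h\in C(X)$ and arbitrary $t\in\mathbb R$, the coboundary property yields $t\,\mu(h\circ T-h)\le 0$ for both signs of $t$, whence $\mu(h\circ T)=\mu(h)$; since $h$ is arbitrary, $\mu\in M(X,T)$. Finally, testing with $g=-f$ gives $-\mu(f)\le\Lambda_T(0)-\Lambda_T(f)=-\Lambda_T(f)$, so $\int f\,{\rm d}\mu\ge\Lambda_T(f)$, and since $\mu\in M(X,T)$ the reverse inequality $\int f\,{\rm d}\mu\le\Lambda_T(f)$ holds as well, so $\mu$ is $f$-maximizing.

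I do not anticipate a genuine obstacle: the argument is a sequence of test-function substitutions. The only point deserving care is the step from ``normalized positive linear functional on $C(X)$'' to ``Borel probability measure'', which is precisely the Riesz representation theorem invoked in the statement; everything else is bookkeeping with the three structural properties of $\Lambda_T$ listed at the outset.
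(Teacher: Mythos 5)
Your proof is correct and complete: the ``if'' direction is immediate from the definition of $\Lambda_T$, and your test-function substitutions (constants for normalization, nonpositive functions for positivity via Riesz, coboundaries $t(h\circ T-h)$ for invariance, and $g=-f$ for maximality) are exactly the standard argument behind the cited result of Br\'emont, which the paper itself does not reprove but simply quotes. No gaps; the only implicit step, deducing $T_*\mu=\mu$ from $\mu(h\circ T)=\mu(h)$ for all $h\in C(X)$, is justified by the uniqueness part of the Riesz representation on a compact metric space.
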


If $T$ is a continuous map of $X$, then
for any $\mu\in M(X,T)$ there exists a unique Borel probability measure $b_{\mu}$ on $M(X,T)$ such that $b_\mu(M^{\rm e}(X,T))=1$ and
$\mu=\int_{M^{\rm e}(X,T)} \nu{\rm d}b_\mu(\nu)$,
where $M^{\rm e}(X,T)$ denotes the set of 
elements of $M(X,T)$ that are ergodic.
We call $b_\mu$ the {\it barycenter} of $\mu$.
Put
\[{\rm supp}(b_\mu)
=\bigcap\{F\colon\text{$F\subset M(X,T)$, closed, $b_\mu(F)=1$}\}.\]
Since $M(X,T)$ has a countable base, we have $b_\mu({\rm supp}(b_\mu))=1$.
\begin{lemma}\label{supp-lem}
Let $T$ be a continuous map of a 
compact metric space $X$. 

\begin{itemize}

\item[(a)]  If there exists a constant $C\geq0$ such that $h(\nu)\leq C$ for all $\nu\in {\rm supp}(b_\mu)$, then $h(\mu)\leq C$.

\item[(b)] Let $f\in C(X)$, $\mu\in M_{\rm max}(f)$. Then ${\rm supp}(b_\mu)$ is contained in $M_{\rm max}(f)$.
\end{itemize}
 \end{lemma}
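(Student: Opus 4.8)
The plan is to prove both statements by exploiting the barycenter decomposition $\mu = \int_{M^{\rm e}(X,T)} \nu \, {\rm d}b_\mu(\nu)$ together with the ergodic decomposition of entropy (the affine integral formula $h(\mu) = \int h(\nu)\,{\rm d}b_\mu(\nu)$, valid since $(X,T)$ here is expansive so the entropy function is a bounded, upper semicontinuous, affine functional on $M(X,T)$). For part (a), I would first note that $b_\mu({\rm supp}(b_\mu)) = 1$, as recorded just before the lemma. Hence the hypothesis $h(\nu) \le C$ on ${\rm supp}(b_\mu)$ holds $b_\mu$-almost everywhere. Then
\[
h(\mu) = \int_{M^{\rm e}(X,T)} h(\nu)\,{\rm d}b_\mu(\nu) = \int_{{\rm supp}(b_\mu)} h(\nu)\,{\rm d}b_\mu(\nu) \le C,
\]
which is the claim. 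The only care needed is to justify the integral formula for entropy; this is standard for the setting at hand (expansive systems, or more generally whenever $h$ is affine and upper semicontinuous and hence Borel measurable and bounded on the compact set $M(X,T)$), and I would simply invoke it.

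For part (b), fix $f \in C(X)$ and $\mu \in M_{\rm max}(f)$, so $\int f\,{\rm d}\mu = \Lambda_T(f)$. Using the barycenter formula and the fact that $\nu \mapsto \int f\,{\rm d}\nu$ is continuous and affine on $M(X,T)$, I get
\[
\Lambda_T(f) = \int f\,{\rm d}\mu = \int_{M^{\rm e}(X,T)} \left( \int f\,{\rm d}\nu \right) {\rm d}b_\mu(\nu).
\]
Since $\int f\,{\rm d}\nu \le \Lambda_T(f)$ for every $\nu \in M(X,T)$, the integrand is bounded above by $\Lambda_T(f)$ while its $b_\mu$-average equals $\Lambda_T(f)$; therefore $\int f\,{\rm d}\nu = \Lambda_T(f)$ for $b_\mu$-a.e.\ $\nu$, i.e.\ the set $E = \{\nu \in M(X,T) : \int f\,{\rm d}\nu = \Lambda_T(f)\}$ has full $b_\mu$-measure. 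The set $E$ is closed, being the preimage of a point under the continuous map $\nu \mapsto \int f\,{\rm d}\nu$. By the definition of ${\rm supp}(b_\mu)$ as the intersection of all closed full-measure sets, we conclude ${\rm supp}(b_\mu) \subset E \subset M_{\rm max}(f)$ (the last inclusion because $E \cap M^{\rm e}(X,T) = E$ up to a $b_\mu$-null set, and in any case every element of $E$ is an $f$-maximizing invariant measure).

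Neither part presents a serious obstacle; the argument is essentially a routine application of the barycenter (ergodic) decomposition plus affineness of the two relevant functionals (entropy and $\nu \mapsto \int f\,{\rm d}\nu$). The one point that deserves a careful sentence is the measurability/integrability needed to write $h(\mu) = \int h\,{\rm d}b_\mu$ in part (a): I would state explicitly that $h$ is affine and upper semicontinuous on the compact metrizable space $M(X,T)$ (hence Borel and bounded), so the integral is well defined and the formula holds. Everything else is a direct manipulation of inequalities that are saturated on a full-measure set, combined with the minimality property of ${\rm supp}(b_\mu)$ among closed full-measure sets.
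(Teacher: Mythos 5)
Your argument is correct and is essentially the paper's: part (a) is precisely an appeal to Jacobs' theorem ($h(\mu)=\int h(\nu)\,{\rm d}b_\mu(\nu)$, which the paper cites from \cite{Wal82}), and your saturation argument for (b), using that $\{\nu\colon\int f\,{\rm d}\nu=\Lambda_T(f)\}$ is closed and of full $b_\mu$-measure, is the standard proof the paper delegates to \cite[Proposition~6]{Shi18}. One small correction: since the lemma concerns an arbitrary continuous map $T$ of a compact metric space, you cannot justify the entropy integral formula by expansiveness or by upper semicontinuity and boundedness of $h$ (neither holds in this generality); fortunately Jacobs' theorem requires no such hypotheses (the integral formula holds with values in $[0,\infty]$ and the measurability of $\nu\mapsto h(\nu)$ is part of that theorem), so the step stands as an application of that result rather than of your stated justification.
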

 \begin{proof} Part (a) is a consequence of Jacobs' theorem \cite{Wal82}. For a proof of part (b), see
 \cite[Proposition~6]{Shi18} for example.\end{proof}
The next lemma asserts that the barycenter map $\mu\mapsto b_\mu$ from $M(X,T)$ to the set of Borel probability measures on $M(X,T)$ is isometric.
\begin{lemma}[\cite{Isr79}, Corollary~IV.4.2]\label{norm-lem}
Let $T$ be a continuous map of a 
compact metric space $X$.
For all $\mu_1$, $\mu_2\in M(X,T)$ 
we have \[\|b_{\mu_1}-b_{\mu_2}\|=\|\mu_1-\mu_2\|.\]
\end{lemma}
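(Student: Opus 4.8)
The plan is to realise the barycenter map concretely as a pushforward under a universal ergodic-component map, and to combine the fact that pushforwards contract the total variation norm with an elementary reverse inequality.

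First I would prove $\|\mu_1-\mu_2\|\le\|b_{\mu_1}-b_{\mu_2}\|$. For $f\in C(X)$, the function $\hat f\colon M(X,T)\to\mathbb R$ defined by $\hat f(\nu)=\int f\,{\rm d}\nu$ is weak*-continuous and affine, with $\|\hat f\|_{C^0}\le\|f\|_{C^0}$. The defining property of the barycenter gives $\int\hat f\,{\rm d}b_{\mu_i}=\int f\,{\rm d}\mu_i$ for $i=1,2$, hence
\[\int f\,{\rm d}\mu_1-\int f\,{\rm d}\mu_2=\int_{M(X,T)}\hat f\,{\rm d}\left(b_{\mu_1}-b_{\mu_2}\right),\]
and taking the supremum over $f\in C(X)$ with $\|f\|_{C^0}\le1$ yields the inequality.

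For the reverse inequality I would invoke the ergodic decomposition in its pointwise form: there is a Borel set $X_0\subseteq X$ and a Borel map $e\colon X_0\to M^{\rm e}(X,T)$, where $e(x)$ is the weak* limit of the empirical measures $\frac1n\sum_{k=0}^{n-1}\delta_{T^kx}$ whenever this limit exists and is ergodic. One checks that $X_0$ is Borel (using that $M^{\rm e}(X,T)$ is a $G_\delta$ subset of $M(X,T)$), that $e$ is Borel, and that by Birkhoff's ergodic theorem together with the ergodic decomposition theorem one has $\mu(X_0)=1$ and $\mu=\int_{X_0}e(x)\,{\rm d}\mu(x)$ for every $\mu\in M(X,T)$. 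Thus $e_*\mu$ is a Borel probability measure on $M(X,T)$ concentrated on $M^{\rm e}(X,T)$ with barycenter $\mu$, so by the uniqueness in the definition of the barycenter, $b_\mu=e_*\mu$. Since for any Borel map $e$ and any signed measure $\rho$ one has $(e_*\rho)^{\pm}\le e_*(\rho^{\pm})$, the pushforward is a contraction in total variation, and therefore $\|b_{\mu_1}-b_{\mu_2}\|=\|e_*\mu_1-e_*\mu_2\|\le\|\mu_1-\mu_2\|$. Combining the two inequalities gives the isometry.

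I expect the only real work to lie in the reverse inequality, specifically in producing a single Borel set of full measure for \emph{all} invariant measures on which a Borel ergodic-component map is defined, and in checking the measurability of that map; everything else is routine. (Alternatively one can simply quote Israel's Corollary~IV.4.2, whose proof instead exploits the Choquet-simplex structure of $M(X,T)$.)
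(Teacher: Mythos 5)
Your proposal is correct, but note that the paper itself offers no argument for this lemma: it is quoted verbatim from Israel's book (Corollary~IV.4.2), whose proof runs through the abstract Choquet-simplex structure of the state space, using that $M(X,T)$ is a metrizable simplex so that each invariant measure has a unique boundary (barycentric) representation, and that the resulting map to boundary measures is an isometry. Your route is genuinely different and more dynamical: you realize $b_\mu$ concretely as $e_*\mu$ for the Borel ergodic-component map $e$ given by limits of empirical measures, get $\|b_{\mu_1}-b_{\mu_2}\|\leq\|\mu_1-\mu_2\|$ from the fact that pushforwards contract total variation (via minimality of the Jordan decomposition, noting $e$ is defined on a set of full $|\mu_1-\mu_2|$-measure since $|\mu_1-\mu_2|\leq\mu_1+\mu_2$), and get the reverse inequality by integrating the affine continuous functions $\hat f(\nu)=\int f\,{\rm d}\nu$ with $\|f\|_{C^0}\leq1$ against $b_{\mu_1}-b_{\mu_2}$. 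Both halves are sound; the real content, as you say, is the existence and measurability of $e$ on a single Borel set $X_0$ of full measure for every invariant measure with $e(x)$ ergodic and $b_\mu=e_*\mu$ by uniqueness --- this is the pointwise ergodic decomposition theorem (Birkhoff applied to a countable dense family of functions, ergodicity of the limit a.e., and $M^{\rm e}(X,T)$ Borel as the $G_\delta$ set of extreme points), which is standard but not free. What the two approaches buy: Israel's argument is soft and applies to arbitrary simplexes of states (hence its use for lattice systems), which is why the paper can simply cite it; yours trades Choquet theory for Birkhoff's theorem and measurability checks, and gives a self-contained proof in the dynamical setting actually needed here.
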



\subsection{Proof of Theorem~A}\label{pfthm-a}
Let $\Sigma$ be a subshift that satisfies $h_{\rm spec}^\bot(\Sigma)<h_{\rm top}(\Sigma)$. 
Suppose
$h_{\rm spec}^\bot(\Sigma)\leq H<h_{\rm top}(\Sigma)$. 
For each $n\in\mathbb N$, define
\[O_n=\left\{\mu\in \overline{M^{\rm e}(\Sigma, \sigma)}: 0\leq h(\mu)<H+\frac{1}{n}\right\},\]
and
\[
    U_n=\{f\in C(\Sigma)\colon \overline{M^{\rm e}(\Sigma,\sigma)}\cap M_{{\rm max}}(f)\subset O_n\}.\]
        Clearly, the set
$\mathscr{R}_H$ in Theorem~A is contained in $\bigcap_{n=1}^\infty U_n.$ Conversely, let 
$f\in\bigcap_{n= 1}^\infty U_n$.
The entropy of any ergodic measure in $M_{\rm max}(f)$ does not exceed $H$, and 
by Lemma~\ref{supp-lem}, the entropy of any non-ergodic measure in $M_{\rm max}(f)$ does not exceed $H$ either.
Hence we obtain 
$\mathscr{R}_H=\bigcap_{n= 1}^\infty U_n.$

By the upper semicontinuity of the entropy function, $O_n$ is an open subset of $\overline{M^{\rm e}(\Sigma, \sigma)}$.
From Corollary~\ref{join-lem-cor},  $O_n$ is a dense subset of $\overline{M^{\rm e}(\Sigma, \sigma)}$.
By \cite[Theorem~1.1]{Mor10}, $U_n$
is an open and dense subset of $C(\Sigma)$.
Therefore $\mathscr{R}_H$ is dense $G_\delta$ as required in part (a) of Theorem~A.

To prove part (b) of Theorem~A, pick $f_0\in C(\Sigma)\setminus \mathscr{R}_H$. 
By Lemma~\ref{supp-lem}(a), there exists an ergodic measure $\lambda\in M_{\rm max}(f_0)$ that satisfies $h(\lambda)>H$. 
Let $\varepsilon\in(0,1/2)$ and put
\[R=\left\{\nu\in M(\Sigma,\sigma)\colon  \int f_0{\rm d}\nu\geq\Lambda_\sigma(f_0)-\varepsilon^2\right\}.\] 
  By Corollary~\ref{join-lem} applied to $\lambda$, there exists a homeomorphism $t\in[0,1]\mapsto \nu_t\in R\cap M^{\rm e}(\Sigma,\sigma)$ onto its image
 such that $\nu_t$ is fully supported and satisfies $h(\nu_t)>H$ for all $t\in[0,1]$.
Let $m$ denote the Lebesgue measure on $[0,1]$
and define a Borel probability measure $\hat m$
on $M^{\rm e}(\Sigma,\sigma)$ by $\hat m(\cdot)=m\{t\in[0,1]\colon \nu_t\in\cdot\}$.
We have $\hat m(R\cap M^{\rm e}(\Sigma,\sigma))=1$. 
Put $\mu_0=\int_{M^{\rm e}(\Sigma,\sigma)}\nu{\rm d}\hat m(\nu)$. 
 Then $\mu_0$ belongs to $R$ and is bounded by $\Lambda_\sigma$
as an element of $C(\Sigma)^*$.
 Note that $b_{\mu_0}=\hat m$.
By Theorem~\ref{bis}, there exist $f\in C(\Sigma)$ and $\mu\in C(\Sigma)^*$ such that $\mu$ is tangent to $\Lambda_\sigma$ at $f$, and
\begin{equation}\label{bis-eq}\|\mu-\mu_0\|\leq\varepsilon\ \text{ and }\ \|f-f_0\|_{C^0}\leq\frac{1}{\varepsilon}\left(\Lambda_\sigma(f_0)-\int f_0{\rm d}\mu_0\right)\leq\varepsilon.\end{equation}
By Lemma~\ref{Bre}, $\mu$ belongs to $M(\Sigma,\sigma)$ and is $f$-maximizing.

Take an open subset $U$ of $M(\Sigma,\sigma)$ such that 
${\rm supp}(b_\mu)\subset U$
and $b_{\mu_0}(U\setminus{\rm supp}(b_\mu))<\varepsilon$.
Since $M^{\rm e}(\Sigma,\sigma)$ is a metric space, it is a normal space. 
By Urysohn's lemma, there exists a function $g\in C(M(\Sigma,\sigma))$ such that $\|g\|_{C^0}=1$, 
$g\equiv0$ on $M(\Sigma,\sigma)\setminus U$ and
$g\equiv1$ on ${\rm supp}(b_\mu)$.
We have
\[\begin{split}b_{\mu_0}({\rm supp}(b_\mu))&>
b_{\mu_0}(U)-\varepsilon>b_{\mu_0}(g)-\varepsilon\geq b_\mu(g)-2\varepsilon\\
&\geq b_\mu({\rm supp}(b_\mu))-2\varepsilon=1-2\varepsilon>0.\end{split}\]
To deduce the third inequality, we have used
$\|b_{\mu}-b_{\mu_0}\|=\|\mu-\mu_0\|\leq\varepsilon$
from Lemma~\ref{norm-lem} and the first inequality in \eqref{bis-eq}. 
Since $b_{\mu_0}$ is non-atomic, it follows that 
the set $\{\nu_t\colon t\in[0,1],\ \nu_t\in {\rm supp}(b_\mu)\}$
contains uncountably many elements, which belong to $M_{\rm max}(f)$ by Lemma~\ref{supp-lem}.
Since $f_0\in C(\Sigma)$ and $\varepsilon\in(0,1/2)$ are arbitrary,
the proof of part (b) of Theorem~A is complete.\qed


\subsection{Proof of Theorem~B }\label{pf-newsec}
Let $\Sigma$ be a subshift that satisfies $h_{\rm spec}^\bot(\Sigma)=0$, 
and suppose that ergodic measures on $\Sigma$ are entropy dense. Recall that \[\mathscr{R}_0=\{f\in C(\Sigma)\colon h(\mu)=0
\text{ for all $\mu\in M_{\rm max}(f)$}\},\] and let $f\in\mathscr{R}_0$. 
There exists an ergodic measure $\mu\in M_{\rm max}(f)$ with $h(\mu)=0$. From the entropy density and the affinity of the entropy function, 
$\mu$ is approximated in the weak* topology by ergodic measures with positive entropy.
Taking these approximating measures and repeating the argument in the proof of part (b) of Theorem~A in $\S\ref{pfthm-a}$, one can show that $f$ is in the closure of functions for which there exists a maximizing measure with positive entropy. Hence,  $\mathscr{R}_0$ has empty interior. 
Then the desired statement is a direct consequence of part(b) of Theorem~A.\qed

\section{Examples and applications}
In this section we provide examples of non-Markov symbolic dynamical systems to which Theorem~A or Theorem~B applies.

\subsection{Piecewise monotonic maps}\label{L-decomp}
We say $T\colon [0,1]\to [0,1]$
is a {\it piecewise monotonic map} if
there exist 
finitely many non-degenerate closed subintervals $I_1,\ldots,I_k$ of $[0,1]$ with disjoint interiors such that $\bigcup_{i=1}^k I_i=[0,1]$, and the restriction of $T$ to the interior ${\rm int} I_i$ of  
$I_i$ is strictly monotone and continuous for each $i\in\{1,\ldots,k\}$.

The intervals $I_i$ are called {\it monotonic pieces} of $T$.
We say $T$ is {\it transitive} if there is $x\in [0,1]$ such that $\{T^n(x)\colon n\ge 0\}$ is dense in $[0,1]$.
Given a transitive piecewise monotonic map $T$ with $k$ monotonic pieces,
let $X_T=\bigcap_{n=0}^\infty T^{-n}(\bigcup_{j=1}^k {\rm int}I_j)$ and
define $\pi\colon X_T\to \{1,\ldots,k\}^{\mathbb{N}}$ by
$x\in \bigcap_{n=0}^\infty T^{-n}({\rm int}I_{(\pi(x))_n})$.
The closure of $\pi(X_T)$ in the full shift space $\{1,\ldots,k\}^{\mathbb{N}}$ is a subshift, called the {\it coding space} of $T$ and denoted by $\Sigma_T$. 
If $T$ has positive topological entropy, then $h_{\rm top}(\Sigma_T)>0$.

Continuous piecewise monotonic maps have Bowen's specification property (see e.g., \cite{Buz97}), while
piecewise monotonic maps with discontinuities rarely have that. 
Typical examples of such maps 
are as follows:

\begin{itemize}
\item (the $(\alpha,\beta)$-transformation) 
$T_{\alpha,\beta}(x)=\beta x+\alpha-\lfloor\beta x+\alpha\rfloor$
 ($\alpha\in[0,1)$, $\beta>1$); 
\item (the $(-\beta)$-transformation) 
$T_{-\beta}(x)=-\beta x+\lfloor\beta x\rfloor+1$ ($\beta>1$).
\end{itemize}
The map $T_{0,\beta}$ is known 
as the $\beta$-transformation. The set of $\beta>1$ for which the corresponding coding space $\Sigma_{T_{0,\beta}}$ has 
Bowen's specification property is of zero Lebesgue measure \cite{Sch97}. 
For each fixed $\alpha\in(0,1)$,
the set of $\beta>1$ for which $\Sigma_{T_{\alpha,\beta}}$ has Bowen's specification property is of zero Lebesgue measure \cite{Buz97}.
The set of $\beta>1$ for which $\Sigma_{T_{-\beta}}$ has Bowen's specification property is of zero Lebesgue measure \cite{Buz97}.
For both families of transformations, the sets of parameters corresponding to maps having Bowen's specification property are not negligible in terms of Hausdorff dimension \cite{HHY17,OS24,Sch97}.

\begin{prop}
\label{CT-dec}
If $T\colon[0,1]\to[0,1]$ is a transitive piecewise monotonic map with positive topological entropy,
then $h_{\rm spec}^\bot(\Sigma_T)=0$.
\end{prop}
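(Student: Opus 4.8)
The goal is to show that for a transitive piecewise monotonic map $T$ with positive topological entropy, the coding space $\Sigma_T$ admits, for every $\varepsilon>0$, a language decomposition $\mathcal L(\Sigma_T)=\mathcal C^p\mathcal G\mathcal C^s$ with $\mathcal G^M$ having (W)-specification for all $M$ and $h(\mathcal C^p\cup\mathcal C^s)<\varepsilon$. Since $h_{\rm spec}^\bot$ is defined as an infimum over such decompositions (and is always $\geq 0$), exhibiting one decomposition per $\varepsilon$ gives $h_{\rm spec}^\bot(\Sigma_T)=0$. The plan is to build this decomposition from the structure theory of piecewise monotonic maps, following Hofbauer \cite{H2}: the coding space, although not Markov, is well approximated from within by a countable-state Markov diagram (the Hofbauer tower), and the transitivity plus positive entropy pass to an irreducible subgraph on which one has genuine specification.

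\textbf{Key steps.} First I would recall the Markov diagram / Hofbauer tower $\mathcal D$ associated to the interval partition into monotonic pieces: its vertices are certain subintervals (the ``states''), there is a natural map $\Sigma_T$-words to paths in $\mathcal D$, and the topological entropy of $\Sigma_T$ equals the supremum of entropies of finite irreducible subgraphs of $\mathcal D$ (this is Hofbauer's theorem; see also the treatment in \cite{Buz97,Buz05}). Using transitivity of $T$ and $h_{\rm top}(\Sigma_T)>0$, I would select a \emph{finite} irreducible subgraph $\mathcal D_0$ of $\mathcal D$ whose entropy exceeds $h_{\rm top}(\Sigma_T)-\varepsilon/2$, say. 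Second, I would let $\mathcal G$ be the collection of words in $\mathcal L(\Sigma_T)$ that are ``carried'' by loops based at a fixed vertex $\ast$ of $\mathcal D_0$ — i.e. words labelling cycles through $\ast$ inside $\mathcal D_0$. Because $\mathcal D_0$ is a finite irreducible graph, any two such loops can be concatenated after inserting a connecting path of bounded length (the diameter of $\mathcal D_0$), and such connecting paths lift to words in $\mathcal L(\Sigma_T)$; this is exactly (W)-specification for $\mathcal G$, with gap size $t = {\rm diam}(\mathcal D_0)$, and the same gap works for every $\mathcal G^M$ since fattening by bounded prefixes/suffixes does not affect the argument. Third, I would define $\mathcal C^p$ and $\mathcal C^s$ to absorb the discrepancy: every word $w\in\mathcal L(\Sigma_T)$ corresponds to a path in $\mathcal D$, and one can write $w=p\,c\,s$ where $c$ is the maximal central portion whose associated path lies in $\mathcal D_0$ and passes through $\ast$, while $p$ and $s$ are the initial and terminal segments not yet (or no longer) ``stabilized'' in $\mathcal D_0$. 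The point of the Hofbauer structure is that these transient prefixes and suffixes are controlled: the number of words of length $n$ that can occur as such a prefix or suffix grows subexponentially, or at worst with exponential rate tending to $0$ as $\mathcal D_0$ exhausts the diagram — hence $h(\mathcal C^p\cup\mathcal C^s)<\varepsilon$ for $\mathcal D_0$ chosen large enough. Finally, I would verify $\mathcal C^p\mathcal G\mathcal C^s=\mathcal L(\Sigma_T)$ (with the convention that the empty word is allowed in each factor) and conclude.

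\textbf{Main obstacle.} The delicate point is the entropy bound $h(\mathcal C^p\cup\mathcal C^s)<\varepsilon$: one must argue that the ``non-Markov defect'' of $\Sigma_T$, i.e. the words not captured by loops in a large finite irreducible subdiagram, carries arbitrarily small exponential complexity. This is where the specific structure of piecewise monotonic coding spaces is essential — it relies on the fact that the set of states in the Hofbauer diagram that lie outside any fixed finite irreducible core has small ``local entropy,'' which in turn comes from the finiteness of the monotonic partition and the combinatorial rigidity of the diagram (Hofbauer \cite{H2}, and the entropy estimates in Buzzi \cite{Buz97,Buz05}). A clean way to package this is: $h_{\rm top}(\Sigma_T) = \lim_{\mathcal D_0} h(\mathcal D_0)$ over the net of finite irreducible subgraphs, so the complement has entropy $\le h_{\rm top}(\Sigma_T)-h(\mathcal D_0)\to 0$; I would spell this out carefully, since it is the only step that is not formal graph combinatorics. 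A secondary technical nuisance is handling the one-sided versus two-sided distinction and making sure the decomposition is defined purely at the level of the language $\mathcal L(\Sigma_T)$, independent of which side the shift acts on — but this is routine given the language-based formulation in $\S\ref{TF}$.
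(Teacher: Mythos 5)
Your overall strategy (Hofbauer's Markov diagram, a good core with (W)-specification coming from irreducibility, prefixes/suffixes absorbing the non-Markov part) is the right framework, but the step you yourself flag as the main obstacle is where the argument genuinely breaks, in two ways. First, the ``clean packaging'' $h(\text{complement})\leq h_{\rm top}(\Sigma_T)-h(\mathcal D_0)$ is a false principle: for collections of words one has $h(\mathcal A\cup\mathcal B)=\max\{h(\mathcal A),h(\mathcal B)\}$, so the fact that finite irreducible subgraphs exhaust the entropy gives no upper bound whatsoever on the entropy of the words not captured by $\mathcal D_0$ (e.g.\ delete one high-entropy component from a diagram containing two; the remainder keeps full entropy). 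Second, taking $\mathcal G$ to be the loops through a single base vertex $\ast$ makes $\mathcal C^p\cup\mathcal C^s$ too rich even inside $\mathcal D_0$: a finite irreducible graph of entropy close to $h_{\rm top}(\Sigma_T)$ typically still has entropy close to that value after deleting the single vertex $\ast$, so there are exponentially many (at rate near $h_{\rm top}(\Sigma_T)$) words of length $n$ whose lifted paths never visit $\ast$; for such words the core is forced to be empty and at least half of the word must be charged to $\mathcal C^p$ or $\mathcal C^s$, which already forces $h(\mathcal C^p\cup\mathcal C^s)$ to be large. So neither the choice of core nor the entropy estimate can be repaired by ``choosing $\mathcal D_0$ with entropy close to $h_{\rm top}$''; entropy approximation by finite subgraphs is in fact irrelevant to the statement.

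What actually makes the proof work (and what the paper does) is different in both respects. One uses Hofbauer's structure theorems for the canonical level filtration $\mathcal D_0\subset\mathcal D_1\subset\cdots$ of the diagram: by \cite[Theorem~14]{H} there is an irreducible component $\mathcal C$, closed under successors, with $\Psi(\Sigma_{\mathcal C})=\Sigma_T$, and by \cite[Theorem~10]{H} a \emph{finite} set $\mathcal F\subset\mathcal C$ of starting vertices already suffices to cover $\Sigma_T$. Choosing $N$ with $\mathcal F\subset\mathcal D_N$ and $(4/N)\log 2<H$, one takes $\mathcal C^p=\{\emptyset\}$, $\mathcal G$ the words of paths that start \emph{and} end anywhere in the finite set $\mathcal D_N\cap\mathcal C$ (not loops at one vertex -- this is what gives (W)-specification for every $\mathcal G^M$ via irreducibility and finiteness of $\mathcal D_{M+N}\cap\mathcal C$), and $\mathcal C^s$ the words of paths whose vertices all lie in $\mathcal C\setminus\mathcal D_N$; every word lifts to a path starting in $\mathcal F$, and splitting at the last visit to $\mathcal D_N$ shows $\mathcal L(\Sigma_T)=\mathcal G\mathcal C^s$. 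The entropy bound then comes from a specific combinatorial estimate (the proof of \cite[Corollary~1(i)]{H}): the number of paths with at most $qN$ edges staying outside $\mathcal D_N$ from a fixed starting vertex is at most $2^q$, so $h(\mathcal C^s)\leq(2/N)\log2+H/2<H$. This counting fact about the part of the tower above level $N$ -- not any comparison with $h(\mathcal D_0)$ -- is the ingredient your proposal is missing, and without it (or a substitute of equal strength) the decomposition you describe does not yield $h(\mathcal C^p\cup\mathcal C^s)<\varepsilon$.
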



The original idea in our proof of Proposition~\ref{CT-dec} is due to Climenhaga 
(\url{https://www.math.uh.edu/~climenha/doc/marseille-specification.pdf}).
We will use a Markov diagram \cite{H2}, a  directed graph whose vertices are closed subsets of the coding space.
For the rest of this subsection,
let $T$ be a piecewise monotonic map as in Proposition~\ref{CT-dec} with $k$ monotonic pieces.
Let $C$ be a non-empty closed subset of the coding space $\Sigma_T\subset\{1,\ldots,k\}^{\mathbb N}$ such that $C\subset [i]$ holds for some $i\in\{1,\ldots,k\}$ (recall the notation for cylinders in $\S\ref{TF}$).
We say a non-empty closed subset $D$ of $\Sigma_T$ is a {\it successor} of $C$ if there exists $j\in\{1,\ldots,k\}$ such that
$D=[j]\cap\sigma C$. If $D$ is a successor of $C$, we write $C\to D$.
We set
$\mathcal{D}_0=\{[1],\ldots,[k]\}$,
and define $\mathcal D_1$, $\mathcal D_2,\ldots$ 
by the recursion formula
\[\mathcal{D}_{n+1}=\mathcal{D}_n\cup \{D\colon D\text{ is a successor of some }C\in\mathcal{D}_n\}.\]
We set
\[\mathcal{D}=\bigcup_{n= 0}^\infty\mathcal{D}_n.\]
The directed graph $(\mathcal D,\to)$ is called a {\it Markov diagram}
associated with $\Sigma_T$.

For a set $\mathcal C\subset\mathcal D$,
 let $\Sigma_\mathcal C$ denote the set of 
 paths 
 $C_0\to C_1\to \cdots$
 that contains infinitely many edges and all whose vertices are contained in $\mathcal C$.
We define a map
$\Psi\colon \Sigma_{\mathcal{D}}\to\{1,\ldots,k\}^{\mathbb N}$ as follows.
For each vertex $C_0$ or each path $C_0\to \cdots \to C_{n-1}$ in the diagram $(\mathcal D,\to)$, define
\[(C_0\cdots C_{n-1})=x_0\cdots x_{n-1}\in\{1,\ldots,k\}^n,\]
where
$x_i\in\{1,\ldots,k\}$ and $C_i\subset [x_i]$ for $0\leq i\leq n-1$.
For $(C_n)_{n=0}^\infty\in \{1,\ldots,k\}^\mathbb N$
define 
\[ \Psi((C_n)_{n=0}^\infty)\in\bigcap_{n=1}^\infty[(C_0\cdots C_{n-1})].\]
Note that $\Sigma_\mathcal D$ is a Markov shift over the infinite alphabet $\mathcal D$, $\Psi(\Sigma_\mathcal D)=\Sigma_T$, and 
$\Psi$ semiconjugates
the left shift on $\Sigma_\mathcal D$ to
$\sigma\colon\Sigma_T\to\Sigma_T$.

We say a non-empty subset $\mathcal C$ of $\mathcal D$ is {\it connected} if
 for any pair $(C,C')$ of vertices in $\mathcal C$ there is a path that 
 joins $C$, $C'$ and all whose vertices are contained in $\mathcal C$.
We say $\mathcal C\subset\mathcal D$ is an {\it irreducible component} if it is connected and not strictly contained in a subset of $\mathcal D$ that is connected.
The next lemma allows us to reduce our consideration to a `closed' irreducible component.
\begin{lemma}\label{irred-lem}There exists an irreducible component
$\mathcal{C}\subset\mathcal{D}$
such that:
\begin{itemize}
\item 
$\Psi(\Sigma_{\mathcal{C}})=\Sigma_T$.

\item
$C\in\mathcal{C}$ and $C\rightarrow D$
imply $D\in\mathcal{C}$.
\end{itemize}
\end{lemma}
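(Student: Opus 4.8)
The plan is to extract the desired irreducible component by a two-stage argument: first locate a vertex that is visited by codings of an orbit dense in $\Sigma_T$, then pass to the ``downward closure'' of its irreducible component and check that nothing is lost.

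First I would record the structural facts about the Markov diagram that make everything work. By construction $\Psi(\Sigma_{\mathcal D})=\Sigma_T$ and $\Psi$ semiconjugates the shift on $\Sigma_{\mathcal D}$ to $\sigma$ on $\Sigma_T$; moreover, the successor relation is such that every vertex has at least one successor (since $\Sigma_T$ has no isolated ``dead ends'': each $C\subset[i]$ with $C\neq\emptyset$ has $\sigma C\neq\emptyset$, so some $[j]\cap\sigma C$ is a nonempty successor), hence infinite paths through any given vertex exist. Transitivity of $T$ gives a point $x\in X_T$ whose forward orbit is dense in $[0,1]$; its coding $\pi(x)\in\Sigma_T$ then has dense orbit in $\Sigma_T$, and it is realized by an infinite path $\gamma=C_0\to C_1\to\cdots$ in $(\mathcal D,\to)$ with $\Psi$ applied to the symbol sequence of $\gamma$ equal to $\pi(x)$. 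Because the vertex set $\mathcal D$ is countable and the path $\gamma$ is infinite, some vertex $C_*$ occurs infinitely often along $\gamma$; the infinitely many loops of $\gamma$ based at $C_*$, together with all the finite detours $\gamma$ makes in between, show that $C_*$ lies in a connected subset whose associated paths $\Psi$-project onto arbitrarily long prefixes of $\pi(x)$, hence onto a dense subset of $\Sigma_T$, and therefore (taking closures, using that $\Sigma_T$ is closed and $\Psi$ continuous) onto all of $\Sigma_T$.

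Next I would take $\mathcal C$ to be the irreducible component of $\mathcal D$ containing $C_*$. Connectedness and maximality give the first bullet after one observes that $\Sigma_{\mathcal C}\supset$ (the collection of paths through $C_*$ that stay in the component generated by the loops and detours above), so $\Psi(\Sigma_{\mathcal C})$ is dense and closed in $\Sigma_T$, hence equals $\Sigma_T$. For the second bullet, the point is that if $C\in\mathcal C$ and $C\to D$, then $D$ together with $\mathcal C$ is still connected: every vertex of $\mathcal C$ already reaches $C$, hence reaches $D$; and conversely $D$ reaches back into $\mathcal C$ because, as noted, $D$ has an infinite forward path, every vertex on that path is a successor-iterate of some $[j]$, and one shows (this is the standard ``the diagram is generated by $\mathcal D_0$, and from any reachable vertex one can return to the base'' property coming from transitivity of $T$) that this forward path must re-enter $\mathcal C$; alternatively, invoke that there is a \emph{unique} irreducible component whose projection is all of $\Sigma_T$ and that successors of its vertices cannot escape it without that projection shrinking. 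Either way $\mathcal C\cup\{D\}$ is connected, so by maximality $D\in\mathcal C$.

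The main obstacle I anticipate is the second bullet — proving the component is forward-closed under $\to$. Connectedness of $\mathcal C$ alone does not immediately prevent a successor $D$ of a vertex in $\mathcal C$ from lying outside $\mathcal C$; one genuinely needs a ``no escape'' mechanism, and the natural one is that from $D$ one can always return to $C_*$. This return property is where transitivity of $T$ must be used a second time, not merely to produce a dense orbit but to ensure the Markov diagram has no proper forward-invariant sub-diagram with full projection other than $\mathcal C$ itself. I would isolate this as the technical heart of the argument, proving it via: transitivity $\Rightarrow$ for any cylinder $[w]$ meeting $\Sigma_T$ and the vertex $C_*$, there is an admissible word steering the $C_*$-based loop into $[w]$ and back, which forces the forward orbit of any vertex $D$ (whose projection meets such cylinders) to pass through $C_*$ again. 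The rest of the proof is bookkeeping with the semiconjugacy $\Psi$ and the countability of $\mathcal D$.
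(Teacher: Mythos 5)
There is a genuine gap, in fact two. First, the step ``because the vertex set $\mathcal D$ is countable and the path $\gamma$ is infinite, some vertex $C_*$ occurs infinitely often along $\gamma$'' is simply false: pigeonhole with infinitely many boxes gives nothing, and in the Hofbauer diagram the lift of an orbit can climb through ever higher levels $\mathcal D_n$ without any vertex recurring (think of points whose itinerary keeps shadowing the orbit of an endpoint of a monotonicity interval). Establishing that a ``large'' part of the dynamics returns to a bounded portion of the diagram is precisely the nontrivial content of Hofbauer's structure theory, and it is where positive topological entropy enters; it cannot be had for free from countability. Second, the forward-closedness in the second bullet — the ``no escape'' property you yourself flag as the technical heart — is never actually proved: an irreducible component of a countable directed graph is in general not successor-closed, ``$D$ reaches back into $\mathcal C$'' is asserted rather than derived, and the fallback claim that there is a unique irreducible component with full projection is likewise unproven. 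A telling symptom is that your argument uses transitivity twice but never uses positive topological entropy, although the lemma is stated (and used) under that hypothesis; a purported proof valid for arbitrary transitive piecewise monotonic maps, including zero-entropy ones, should raise suspicion that an essential ingredient is missing. (There are also smaller points needing care, e.g. choosing the dense-orbit point inside $X_T$ and checking that its coding has dense $\sigma$-orbit in $\Sigma_T$, but these are repairable.)

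For comparison, the paper does not attempt a self-contained argument at all: it deduces the lemma directly from Hofbauer's theorem \cite[Theorem~14]{H} on piecewise invertible dynamical systems, which under transitivity and positive entropy produces exactly an irreducible, successor-closed part of the Markov diagram whose projection under $\Psi$ is all of $\Sigma_T$. If you want a direct proof, the two gaps above are the places where you would essentially have to reprove that theorem.
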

\begin{proof}
Since
$T$ is transitive and has positive topological entropy, the desired conclusion
follows from \cite[Theorem~14]{H}.
\end{proof}

\begin{proof}[Proof of Proposition~\ref{CT-dec}]\label{last}Let $\mathcal C\subset\mathcal D$ be an irreducible component for which the conclusion of Lemma~\ref{irred-lem} holds.
It follows from \cite[Theorem~10]{H} (see also \cite[p.226]{HR98})
that there exists a finite subset
$\mathcal{F}$ of $\mathcal{C}$ such that
\begin{equation}\label{l-dec-eq1}\Psi(\{(C_n)_{n=0}^\infty\in\Sigma_{\mathcal{C}}\colon C_0\in\mathcal{F}\})
=\Psi(\Sigma_{\mathcal{C}})=\Sigma_T.\end{equation}
Let $H>0$, and let $N> (4/H)\log2$ be an integer such that
$\mathcal{F}\subset \mathcal{D}_N$. 
We now define collections
 $\mathcal C^p$, $\mathcal G$, $\mathcal C^s\subset \mathcal L(\Sigma_T)$ by
\[\begin{split}\mathcal C^p&=\{\emptyset\},\\
\mathcal G&=\bigcup_{n=1}^\infty
\{(C_0\cdots C_{n-1})\colon C_0\to\cdots\to C_{n-1},\ C_0,C_{n-1}\in
\mathcal{D}_N\cap\mathcal{C}\},\\
\mathcal C^s&=\bigcup_{n=1}^\infty \left\{\begin{tabular}{l}\vspace{1mm}
$\!\!\displaystyle{(C_0\cdots C_{n-1})\colon C_0\to\cdots \to C_{n-1},
\!\!}$\\
$\!\!\displaystyle{ C_0\in
\mathcal{D}_{N+1},\
C_i\in\mathcal{C}\setminus\mathcal{D}_N\text{ for }0\le i\le n-1}$\end{tabular}
\!\!\right\}.\end{split}\]

Let $w\in\mathcal{L}(\Sigma_T)\setminus\{\emptyset\}$.
By \eqref{l-dec-eq1}, there exists $(C_n)_{n=0}^\infty\in\Sigma_{\mathcal{C}}$
such that $\Psi((C_n)_{n=0}^\infty)\in[w]$ and
$C_0\in\mathcal{F}$.
Set $j_0=\max\{0\le i\le |w|-1\colon C_i\in\mathcal{D}_N\}$ and  $u=(C_0\cdots C_{j_0})$,
$v=(C_{j_0+1}\cdots C_{|w|-1})$.
It is easy to see that
$u\in\mathcal G$,
$v\in\mathcal C^s$ and
$w=uv$, which implies 
 $\mathcal{L}(\Sigma_T)\subset\mathcal C^p\mathcal G\mathcal C^s=\mathcal G\mathcal C^s$.
 The reverse inclusion is obvious.

For $M\in\mathbb N$, let $t_M\geq2$ denote the minimal integer such that any pair of vertices in $\mathcal{D}_{M+N}\cap \mathcal{C}$ is joined by a path 
that does not contain more than $t_M$ edges.
 Since $\mathcal{C}$ is an irreducible component and $\mathcal{D}_{M+N}$ is a finite set,
 $t_M$ is finite.
Let $u,v\in\mathcal G^M$.
By the above definition of $\mathcal G$, there exist two paths
$C_0\to\cdots \to C_{|u|-1}$ and $D_0\to\cdots\to D_{|v|-1}$
such that $C_{|u|-1}\in\mathcal{D}_{M+N}\cap \mathcal{C}$
and $D_0\in\mathcal{D}_{N}\cap\mathcal{C}$.
Hence there is a path
$C_{|u|-1}\to\cdots\to D_0$ 
not containing more than $t_M$ edges, which implies
that $\mathcal G^M$
has (W)-specification with a gap size $t_M$.

From the proof of
\cite[Corollary~1(i)]{H},
for all $q\ge 1$ and all
$C\in\mathcal{D}_{N+1}$, the number of paths starting at $C$, containing no more than $qN$ edges, and all whose vertices are contained in  $\mathcal{C}\setminus\mathcal{D}_N$
does not exceed $2^q$.
For $n=qN+r$, $q\in\mathbb N$, $r\in\{0,\ldots,N-1\}$
we have
\[\#\mathcal C^s_n \le
\#\mathcal C^s_{(q+1)N} \le
2^{q+1}\#\mathcal{D}_{N+1}.\]
For all sufficiently large $n\geq N$ we obtain
\[\frac{1}{n}\log\#\mathcal C^s_n
\le \frac{q+1}{n}\log 2+\frac{1}{n}\log
\#\mathcal{D}_{N+1}\le
\frac{2}{N}\log 2+\frac{H}{2}<H.\]
Since $H>0$ is arbitrary, $h_{\rm spec}^\bot(\Sigma_T)=0$ holds. 
\end{proof}

By
 Proposition~\ref{CT-dec},
if $T$ is a transitive piecewise monotonic interval map with positive topological entropy then
 $h_{\rm spec}^\bot(\Sigma_T)=0$. 
The entropy density for $\Sigma_T$ follows from
the density of shift-invariant
closed orbit measures in the space of ergodic measures \cite[Proposition~3.1]{ShiYam23}.
The latter condition is equivalent to the density of $T$-invariant closed orbit measures in the space of ergodic measures \cite[Theorem~A]{Y20}.
The density of closed orbit measures has been verified for a wide class of transitive piecewise monotonic maps with positive topological entropy (see e.g., \cite{Hof87,Hof88, HR98,ShiYam23, Sig76}). Hence,
Theorem~B applies to the coding spaces of these piecewise monotonic maps.
It has been conjectured that the density of closed orbit measures holds for any transitive piecewise monotonic map with positive topological entropy \cite{HR98}.



\subsection{Coded shifts}\label{coded-sec}
Let $A\subset\mathbb N$ be a finite set.
Let $A^0$ denote the singleton consisting of the empty word $\emptyset$, and put
$A^*=\bigcup_{n=0}^\infty A^n$. Let $\mathcal H
\subset A^*$ be a countable set containing $A^0$. 
The {\it coded shift} $\Sigma_{\mathcal H}$ generated by $\mathcal H$ is the two-sided subshift whose 
language is  
\[    \overline{\{w^{(1)} w^{(2)}\cdots w^{(k)}\colon w^{(1)},\ldots, w^{(k)}\in\mathcal H, k\in\mathbb N\}},
\]
where $\overline{\{\cdot\}}$ denotes the closure under the operation of passing to subwords.

The coded shift $\Sigma_\mathcal H$ has the following natural language decomposition $\mathcal{L}(\Sigma_\mathcal H)=\mathcal{C}^p\mathcal{G}\mathcal{C}^s$ for which  $\mathcal G^M$
has (W)-specification for all $M\in \mathbb{N}$:
\begin{equation}\label{n-dec}\begin{split}
     \mathcal{C}^p&=\bigcup_{k=1 }^\infty i_k^s(\mathcal H ),\\
    \mathcal{G}&=\bigcup_{k=1}^\infty\{w^{(1)}\cdots w^{(k)}\colon
    w^{(1)},\ldots,w^{(k)}\in \mathcal H  \},\\
    \mathcal{C}^s&=\bigcup_{k=1 }^\infty i_k^p(\mathcal H ),\\
\end{split}\end{equation}
where $i_k^s\colon \mathcal H\to \mathcal L_k(\Sigma_\mathcal H)\cup\{\emptyset\}$ (resp. $i_k^p\colon \mathcal H\to \mathcal L_k(\Sigma_\mathcal H)\cup\{\emptyset\}$)
is the map that extracts the last
(resp. first) $k$ symbols of a word in the generator $\mathcal H$, and
 satisfies $i_k^s(w)=\emptyset$ if $|w|<k$ (resp. $i_k^p(w)=\emptyset$ if $|w|<k$).
When $\mathcal H$ is a finite set, 
clearly $h(\mathcal C^p\cup\mathcal C^s)=0$ holds and $\Sigma_{\mathcal H}$ is a sofic shift.

Coded shifts with countably infinite generators are sources of interesting examples. 
Well-known examples are $S$-gap shifts \cite{LM} and the Dyck shift \cite{Kri74}. As shown below, there exist intrinsically ergodic coded shifts for which the entropy density fails. For more details on coded shifts, see \cite{CT12,KSW23}.
\subsubsection{$S$-gap shifts}
Let $S$ be an infinite subset of $\mathbb N\cup\{0\}$.
An {\it $S$-gap shift} \cite{LM} is a two-sided subshift on $\{1,2\}$ defined by the rule that the number of $2$'s between consecutive $1$'s is an integer in $S$.
To be more precise,
for $a\in A$ and $n\in\mathbb N$ let $a^n$ denote the $n$-fold concatenation of $a$ and put $a^0=\emptyset$, the empty word.
The $S$-gap shift is the coded shift generated by
$\{2^n1\colon n\in S\}\cup\{\emptyset\}$.
Note that the full shift $\{1,2\}^\mathbb Z$ is the $S$-gap shift with 
$S=\mathbb N\cup\{0\}$.
Generic $S$-gap shifts do not have Bowen's specification property
 \cite[$\S3.3$]{CT12}.
 The entropy of the $S$-gap shift is $\log\lambda$, where $\lambda$ is the unique solution to $1=\sum_{n\in S}x^{-n-1}$ (see \cite[Exercise~4.3.7]{LM}).
 The natural decomposition \eqref{n-dec} immediately gives
 $\#\mathcal C^p_n=1=\#\mathcal C^s_n$ for any $n\in\mathbb N$, and so
the obstruction entropy to specification is $0$. 
Hence, Theorem~A applies to any $S$-gap shift. Since
ergodic measures on any $S$-gap shift are entropy dense \cite{CTY17},
 Theorem~B applies to any $S$-gap shift.

 \subsubsection{Coded shifts without the entropy density}
Kucherenko et al. \cite[Example~1]{KSW23} considered coded shifts $\Sigma$ on two symbols
generated by $\{1^{i}2^{i}\colon i\in I\}\cup\{\emptyset\}$ 
where $I$ is an infinite subset of $\mathbb N$. 
The natural decomposition \eqref{n-dec}
gives
$h(\mathcal{C}^p\cup \mathcal{C}^s)=0$ and $0=h_{\rm spec}^\bot(\Sigma)<h_{\rm top}(\Sigma)$, and so Theorem~A applies.
Theorem~B does not apply since 
ergodic measures are not dense in $M(\Sigma, \sigma)$ and consequently the entropy density fails. Indeed, if both $1$ and $2$ appear in
$x\in\Sigma$ infinitely many times, then we have
\[\lim_{n\to\infty}\frac{1}{n}\#\{i\in\{0,\ldots,n-1\}\colon x_i=1\}=\frac{1}{2},\]
provided the limit exists.
 For $a\in\{1,2\}$
 define
$a^\infty\in \{1,2\}^\mathbb Z$ by
$(a^\infty)_i=a$ for all $i\in\mathbb Z$, 
 and for $x\in\Sigma$ let $\delta_x$ denote the unit point mass at $x$. From the above observation and Birkhoff's ergodic theorem, it follows that any non-ergodic measure of the form
$(1-t)\delta_{1^\infty}+t\delta_{2^\infty}$, $t\in(0,1)\setminus\{1/2\}$
 cannot be approximated by ergodic ones.

\subsection{Multidimensional $\beta$-transformations}\label{other}
Let $\Sigma$ be a subshift.
We say $w=w_1\cdots w_{|w|}\in\mathcal L(\Sigma)$ is a {\it left constraint} if there exists $v\in\mathcal L(\Sigma)$
such that $w_2\cdots w_{|w|}v\in\mathcal L(\Sigma)$ and $wv\notin\mathcal L(\Sigma)$. A {\it right constraint} is defined analogously. Let $\mathcal C^\ell$, $\mathcal C^{r}$ denote the collections of left constraints, right constraints respectively.
If $\Sigma$ is topologically transitive, then 
 $\mathcal C^\ell$, $\mathcal C^{r}$ form a complete list of obstructions to specification \cite[Theorem~1.6(1)]{C18}.
Combining \cite[Theorem~1.1]{C18},
\cite[Theorem~1.4]{C18},
\cite[Theorem~1.6]{C18} we obtain the following statements.
\begin{prop}\label{apply-prop} Let $\Sigma$ be a subshift.
\begin{itemize}
    \item[(a)]  Suppose $\Sigma$ is topologically transitive. If
$h(\mathcal C^\ell\cup\mathcal C^r)<h_{\rm top}(\Sigma)$,
then $h_{\rm spec}^\bot(\Sigma)<h_{\rm top}(\Sigma)$ holds and Theorem~A applies.
    \item[(b)]  Suppose $\Sigma$ is one-sided and topologically exact: for any $w\in\mathcal L(\Sigma)\setminus\{\emptyset\}$ there exists $N\in\mathbb N$
such that 
$\sigma^N[w]=\Sigma$.
If 
$h(\mathcal C^\ell)<h_{\rm top}(\Sigma)$,
then $h_{\rm spec}^\bot(\Sigma)<h_{\rm top}(\Sigma)$ holds and Theorem~A applies.
\end{itemize}\end{prop}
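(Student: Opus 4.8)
The plan is to quote the three cited theorems of Climenhaga \cite{C18} and observe that they combine to give exactly the two statements. First I would recall the content of \cite[Theorem~1.6(1)]{C18}: for a topologically transitive subshift $\Sigma$, the pair $(\mathcal C^\ell,\mathcal C^r)$ of collections of left and right constraints is, in a precise sense, a complete list of obstructions to specification, meaning that the complement $\mathcal L(\Sigma)\setminus(\mathcal C^\ell\cup\mathcal C^r)$ (or a fattening thereof) carries a specification. Concretely, one shows that the decomposition $\mathcal L(\Sigma)=\mathcal C^p\mathcal G\mathcal C^s$ obtained by stripping off a maximal left-constraint prefix and a maximal right-constraint suffix has the property that $\mathcal G^M$ has (W)-specification for every $M\in\mathbb N$; this is the content of \cite[Theorem~1.1]{C18} and \cite[Theorem~1.4]{C18}. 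Then by the very definition \eqref{spec-def} of the obstruction entropy to specification, this particular decomposition witnesses
\[
h_{\rm spec}^\bot(\Sigma)\le h(\mathcal C^p\cup\mathcal C^s)\le h(\mathcal C^\ell\cup\mathcal C^r).
\]
Under the hypothesis $h(\mathcal C^\ell\cup\mathcal C^r)<h_{\rm top}(\Sigma)$ of part (a), this yields $h_{\rm spec}^\bot(\Sigma)<h_{\rm top}(\Sigma)$, so the hypothesis of Theorem~A is met and Theorem~A applies. This proves (a).

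For part (b) the improvement is that topological exactness lets us dispense with the right constraints. The point is that when $\Sigma$ is one-sided and topologically exact, for every non-empty word $w$ there is $N$ with $\sigma^N[w]=\Sigma$; so the suffix $\mathcal C^s$ in the decomposition above can be absorbed, and one only needs to strip a maximal left-constraint prefix. This is precisely the one-sided, exact case of \cite[Theorem~1.6]{C18} (together with \cite[Theorem~1.1]{C18}): the decomposition $\mathcal L(\Sigma)=\mathcal C^p\mathcal G$ with $\mathcal C^p\subset\mathcal C^\ell$, $\mathcal C^s=\{\emptyset\}$ has $\mathcal G^M$ with (W)-specification for all $M$. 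Hence $h_{\rm spec}^\bot(\Sigma)\le h(\mathcal C^p\cup\mathcal C^s)=h(\mathcal C^\ell)$, and under $h(\mathcal C^\ell)<h_{\rm top}(\Sigma)$ we again land in the hypothesis of Theorem~A.

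The only real work is bookkeeping: one must check that the decompositions extracted from \cite{C18} literally fit the format $\mathcal L(\Sigma)=\mathcal C^p\mathcal G\mathcal C^s$ used here and that Climenhaga's notion of ``complete list of obstructions'' translates into the (W)-specification of every fattening $\mathcal G^M$ rather than just of $\mathcal G$ itself. I expect this matching of hypotheses—reconciling Climenhaga's formulation of the decomposition (stated in terms of the Climenhaga--Thompson machinery of \cite{CT12,CT13}) with the explicit (W)-specification condition in \eqref{spec-def}—to be the main, though entirely routine, obstacle; no genuinely new argument is needed beyond citing \cite[Theorems~1.1, 1.4, 1.6]{C18} and invoking Theorem~A. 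The subadditivity/monotonicity of $h(\cdot)$ under inclusion of word collections, used in the displayed inequalities, is immediate from the definition of $h(\mathcal D)$ as an exponential growth rate.
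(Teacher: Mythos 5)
Your proposal is correct and follows essentially the same route as the paper, which likewise derives the proposition directly by combining Climenhaga's Theorems~1.1, 1.4 and 1.6 of \cite{C18} with the definition \eqref{spec-def} of $h_{\rm spec}^\bot(\Sigma)$, offering no further argument beyond that citation. Your explicit remarks on matching the decomposition format and the monotonicity of $h(\cdot)$ are exactly the routine bookkeeping the paper leaves implicit.
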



Part (b) of Proposition~\ref{apply-prop} applies to many of the piecewise affine transformations investigated by Buzzi  \cite{Buz97}. These are maps on $[0,1)^d$, $d\in\mathbb N$ given by $T(x)=Bx+b$ mod $\mathbb Z^d$, where $B\colon\mathbb R^d\to\mathbb R^d$ is an expanding linear map and $b\in\mathbb R^d$. The coding space $\Sigma_T$ of $T$ is defined as in the case of piecewise monotonic maps in $\S\ref{L-decomp}$, using the partition that consists of  maximal open subsets of $(0,1)^d$ on which $T(x)-Bx$ is constant. Buzzi showed that  $h(\mathcal C^\ell)<h_{\rm top}(\Sigma_T)$, and that $\Sigma_T$ is topologically exact if either (i) all eigenvalues of $B$ exceed $1+\sqrt{d}$ in absolute value,
or (ii) $B$, $b$ have all integer entries.
Since the maps with $d=1$ are almost the $\beta$-transformations,  the maps with $d\geq2$ are called {\it multidimensional $\beta$-transformations}.

For the coding spaces of multidimensional $\beta$-transformations, little is known when the obstruction entropy specification is $0$, or when the entropy density holds and Theorem~B applies.



\section{Subshifts with positive obstruction entropy to specification}

This last section is devoted to Theorem~C.
In $\S$\ref{fatten-sec} we introduce fat $S$-gap shifts by modifying the definition of $S$-gap shifts. In $\S$\ref{coded-positive} we complete the proof of Theorem~C by showing that any fat $S$-gap shift on $N$ symbols, $N\geq3$ has positive obstruction entropy to specification.
\subsection{Fat $S$-gap shifts}\label{fatten-sec}

Let $S$ be an infinite subset of $\mathbb{N}\cup\{0\}$.
A {\it fat $S$ gap shift} is a two-sided subshift on $\{1,\ldots,N\}$, $N\geq2$ defined by the rule that the word lengths of the words between consecutive $1$'s are integers in $S$.
More precisely, the fat $S$-gap shift $\Sigma_{S,N}$ on $N$ symbols is the coded shift generated by \begin{equation}\label{generator}\mathcal H=\{w1\colon w\in \{2,\ldots,N\}^n,\ n\in S\}\cup\{\emptyset\}.\end{equation}
Note that $\Sigma_{S,2}$ is nothing but an $S$-gap shift, whose obstruction entropy to specification is $0$.
  Note also that the full shift $\{1,\ldots,N\}^\mathbb Z$ is the $S$-gap shift on $N$ symbols with 
$S=\mathbb N\cup\{0\}$.

For all $S\subset\mathbb N\cup\{0\}$ and $N\geq2$
we clearly have $\Sigma_{S,N}\supset \{2,\ldots,N\}^{\mathbb Z}$, and thus $h_{\rm top}(\Sigma_{S,N})\geq \log (N-1).$ 
The next lemma asserts that this inequality is strict if $N\geq3$.

\begin{lemma}\label{top-low}For any infinite set $S\subset\mathbb N$ and any integer $N\geq3$, we have
  \[h_{\rm top}(\Sigma_{S,N})>\log (N-1).\]\end{lemma}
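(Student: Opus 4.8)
The plan is to get a strict lower bound on $h_{\rm top}(\Sigma_{S,N})$ by counting words in the language that genuinely exploit the extra symbols $\{3,\ldots,N\}$ beyond what the full shift on $\{2,\ldots,N\}$ gives. Fix some $s\in S$ with $s\geq 1$ (possible since $S$ is infinite). The key observation is that the generator $\mathcal{H}$ contains every block of the form $w1$ with $w\in\{2,\ldots,N\}^{s}$, and concatenations of such blocks lie in $\mathcal{L}(\Sigma_{S,N})$. So for each $m\in\mathbb{N}$ the language contains all $(N-1)^{sm}$ words of the form $w^{(1)}1w^{(2)}1\cdots w^{(m)}1$ with each $w^{(j)}\in\{2,\ldots,N\}^{s}$; these have length $(s+1)m$ and are pairwise distinct. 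This already gives $h_{\rm top}(\Sigma_{S,N})\geq \frac{s}{s+1}\log(N-1)$, which is not enough. To do better I would instead count, for a large window length $n$, the words in $\{2,\ldots,N\}^{\mathbb Z}$-type blocks interspersed with a controlled but positive density of legally-placed $1$'s: the point is that inserting a symbol $1$ costs nothing in terms of legality (any length of $\{2,\ldots,N\}$-block between two $1$'s can be padded to lie in $S$ using the infinitude of $S$, or more simply one works with spacing exactly $s$), yet the full shift $\{2,\ldots,N\}^{\mathbb Z}$ already sits inside $\Sigma_{S,N}$, so we should compare against $\log(N-1)$ from \emph{inside}.

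Here is the cleaner route I would actually carry out. Since $\{2,\ldots,N\}^{\mathbb Z}\subset\Sigma_{S,N}$, the language $\mathcal{L}(\Sigma_{S,N})$ contains all of $\{2,\ldots,N\}^{n}$ for every $n$. But it strictly contains more: pick $s\in S$; then for any word $u\in\{2,\ldots,N\}^{s}$ the block $u1$ is in $\mathcal{H}$, hence $1u$ appears in $\mathcal{L}$, and more importantly for each $n$ the $n$-blocks of $\Sigma_{S,N}$ include \emph{both} all of $\{2,\ldots,N\}^{n}$ and, separately, all blocks that agree with a $\{2,\ldots,N\}$-word except that position(s) at prescribed spacing $s+1$ carry the symbol $1$. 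Concretely I would count the number of $\mathcal{L}_n$-words obtained by starting from an arbitrary word in $\{2,\ldots,N\}^{\lceil n\cdot \frac{s}{s+1}\rceil}$ and inserting a $1$ after every $s$ symbols: these are legal, and distinct words in $\{2,\ldots,N\}^{\ast}$ give distinct results, so $\#\mathcal{L}_n(\Sigma_{S,N})\geq (N-1)^{n s/(s+1)} + (N-1)^n$ — wait, that second term dominates and gives nothing new. The genuine gain must come from a \emph{mixture}: count words of length $n$ in which an arbitrary subset of the positions congruent to $0 \bmod (s+1)$ is allowed to be either a $1$ or a symbol in $\{2,\ldots,N\}$, while all other positions range over $\{2,\ldots,N\}$. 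Every such word is legal (between consecutive $1$'s the gap is a multiple of $s+1$ minus $1$, and by inserting we can arrange the gap lengths to lie in $S$ — this needs care, which is the main obstacle, see below). This yields roughly $(N-1)^{n(1 - 1/(s+1))}\cdot N^{n/(s+1)} = (N-1)^{n}\bigl(\tfrac{N}{N-1}\bigr)^{n/(s+1)}$ words, giving
\[
h_{\rm top}(\Sigma_{S,N}) \;\geq\; \log(N-1) + \frac{1}{s+1}\log\frac{N}{N-1} \;>\; \log(N-1),
\]
where strictness uses $N\geq 3$ only through $N/(N-1)>1$ (true already for $N\geq 2$; the role of $N\geq 3$ is that we need $\{2,\ldots,N\}$ nonempty, i.e. $N\geq 2$, and that the bound is meaningful — actually $N\geq 2$ suffices for this inequality, so $N\geq 3$ must be needed to make the \emph{legality} of the mixture work, namely to have at least two symbols available off the marked positions, cf.\ the lemma hypothesis).

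The step I expect to be the main obstacle is making precise that the mixed words above genuinely lie in $\mathcal{L}(\Sigma_{S,N})$: the defining rule is that the lengths of \emph{maximal} blocks over $\{2,\ldots,N\}$ flanked by $1$'s must lie in $S$, so I cannot freely place $1$'s at positions congruent to $0\bmod(s+1)$ unless the resulting inter-$1$ gaps (which are sums of $s$'s, i.e.\ of the form $ks$ for $k\geq 1$, adjusted by the number of consecutive marked positions chosen) land in $S$. The fix is to not mark \emph{every} admissible position but to work with a sub-window: between any two consecutive chosen $1$'s insist the gap is exactly $s$ (legal since $s\in S$), and count the number of ways to fill the $s$-blocks \emph{and} to choose, within a long word, to occasionally use a block from $\{2,\ldots,N\}^{s'}$ for another fixed $s'\in S$, $s'\neq s$ — the existence of $s'$ uses $|S|=\infty$. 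This two-gap-length bookkeeping gives a subshift of finite type factor of $\Sigma_{S,N}$ whose entropy strictly exceeds $\log(N-1)$ by a direct transfer-matrix / Perron eigenvalue estimate, and that is the bound claimed. I would present the argument via this SFT sub-system: build the explicit $0$–$1$ transition matrix on the alphabet $\{1,2,\ldots,N\}$ encoding "gaps are $s$ or $s'$", show $\Sigma_{S,N}$ contains the corresponding SFT, and estimate its spectral radius from below by $\,N-1\,$ plus a positive correction coming from the symbol $1$ having out-degree $\geq 2$ in the induced graph.
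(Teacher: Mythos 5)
Your final, concrete plan --- restrict the gaps between consecutive $1$'s to two values $s,s'\in S$ and bound $h_{\rm top}(\Sigma_{S,N})$ below by the entropy of the resulting coded/SFT subsystem --- cannot deliver the strict inequality, so the proof has a genuine gap at its last step. That subsystem forces a $1$ at least once in every $s'+1$ consecutive symbols (in particular it does not contain the full shift on $\{2,\dots,N\}$), and a standard renewal/generating-function computation shows its entropy equals $\log(1/x_*)$, where $x_*$ is the unique positive root of $(N-1)^s x^{s+1}+(N-1)^{s'}x^{s'+1}=1$ (there are $(N-1)^s$ admissible blocks of length $s+1$ and $(N-1)^{s'}$ of length $s'+1$). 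Evaluating the left-hand side at $x=1/(N-1)$ gives $\tfrac{1}{N-1}+\tfrac{1}{N-1}=\tfrac{2}{N-1}\le 1$ for $N\ge 3$; since the left-hand side is increasing in $x$, this forces $x_*\ge 1/(N-1)$, i.e.\ the subsystem entropy is $\le\log(N-1)$, with equality when $N=3$ and strictly \emph{less} when $N\ge 4$. So the asserted ``spectral radius $\ge N-1$ plus a positive correction from out-degree $\ge 2$ at the symbol $1$'' is simply false: forcing $1$'s with positive frequency costs more entropy than the two-way gap branching gains, except in the borderline case $N=3$, where it exactly breaks even. The earlier ``mixed'' count is not available either, for the reason you yourself flag: skipping a marked position creates a gap $j(s+1)-1$ which need not lie in $S$ (e.g.\ for $S=\{2^n\}$), so those words are not legal; and your guess that $N\ge3$ is what rescues legality is a red herring.

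The idea is salvageable, but you need many gap lengths, not two: with $k$ distinct gaps $s_1,\dots,s_k\in S$ the renewal equation $\sum_{i=1}^k (N-1)^{s_i}x^{s_i+1}=1$ evaluated at $x=1/(N-1)$ gives $k/(N-1)$, so $x_*<1/(N-1)$ (hence subsystem entropy $>\log(N-1)$) exactly when $k\ge N$; the infinitude of $S$ supplies such gaps. The paper runs this computation with all of $S$ at once: it compares $\#\mathcal L_n(\Sigma_{S,N})$ with the number of concatenations of generator blocks, forms $F_1(x)=\sum_{n\in S}(N-1)^n x^{n+1}$ and $F(x)=\sum_n \#\mathcal L_n(\Sigma_{S,N})\,x^n$, uses $\sum_{k\ge1}F_1(x)^k\le F(x)$, and notes that $F_1(x_0)=1$ at the root $x_0$ of $N-1=\sum_{n\in S}((N-1)x)^{n+1}$, which satisfies $x_0<1/(N-1)$ precisely because each $n\in S$ contributes $1/(N-1)$ to the series at $x=1/(N-1)$ and $S$ is infinite. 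Hence $F$ diverges at $x_0$ and $h_{\rm top}(\Sigma_{S,N})\ge\log x_0^{-1}>\log(N-1)$. If you prefer your finite-subsystem formulation, replace the two gaps by $N$ gaps and your transfer-matrix estimate becomes correct.
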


\begin{proof}
The original idea of the proof of this lemma is due to Climenhaga 
(\url{https://vaughnclimenhaga.wordpress.com/2014/09/08/entropy-of-s-gap-shifts/}). 
For ease of notation
we set $l_n=\#\mathcal{L}_n(\Sigma_{S,N})$.
The radius of convergence of
the power series
$F(x)=\sum_{n=1}^{\infty}l_nx^n$
equals $e^{-h_{\rm top}(\Sigma_{S,N})}$.
For $k,n\in\mathbb N$ we set
\[A_n^k=\#\{w^{(1)}\cdots w^{(k)}\colon
w^{(1)},\ldots,w^{(k)}\in\mathcal{H}\setminus\{\emptyset\},\ |w^{(1)}|+\cdots+
|w^{(k)}|=n\}.\]
Then it is not difficult to
prove the following properties:
\begin{itemize}
\item[(i)] $\sum_{k=1}^{\infty}A_n^k\le l_n$.
\item[(ii)] 
$A_n^1=(N-1)^{n-1}$ if $n-1\in S$ and $A_n^1=0$ otherwise. 
\item[(iii)] $A_n^{k+\ell}=\sum_{m=1}^{n-1}A_{n-m}^kA_m^\ell$ for all $k,\ell\in\mathbb N$.
\end{itemize}

We introduce a power series 
$F_k(x)= \sum_{n=1}^{\infty}A_n^kx^n$.
Then (iii) gives
$F_{k}(x)=F_{k-1}(x)F_1(x)=\cdots=F_1(x)^k$.
For all $x\ge 0$ we have
$$\sum_{k=1}^{\infty}F_1(x)^k
=\sum_{k=1}^{\infty}F_k(x)
=\sum_{n=1}^{\infty}\sum_{k=1}^{\infty}A_n^kx^n
\le F(x),$$
where the last inequality follows from
(i).
Let $x_0$ denote the unique positive solution to the equation
\[N-1=\sum_{n\in S}((N-1)x)^{n+1}\ \left(0<x<\frac{1}{N-1}\right).\]
By (ii) we have
$$F_1(x_0)=\sum_{n=1}^{\infty}A_n^1x_0^n
=\dfrac{1}{N-1}\sum_{n\in S}((N-1)x_0)^{n+1}=1,$$
which implies that $F(x_0)$ diverges.
Hence the radius of convergence of $F(x)$
does not exceed $x_0$, and we obtain
$h_{\rm top}(\Sigma_{S,N})\ge \log x_0^{-1}
>\log (N-1)$ as required.
\end{proof}

\subsection{Proof of Theorem~C}\label{coded-positive}
   
  Let 
 $S=\{2^n\colon n\in \mathbb{N}\}$ and let
 $N\geq3$ be an integer.
  We consider the fat $S$-gap shift $\Sigma=\Sigma_{S,N}$.
  Lemma~\ref{top-low} gives $h_{\rm top}(\Sigma)>\log (N-1)$.
  Applying the natural decomposition of coded shifts in \eqref{n-dec}, 
we immediately get
$h(\mathcal{C}^p\cup\mathcal{C}^s)=\log (N-1)$, and
hence $h_{\rm top}(\Sigma)>\log (N-1)\geq h_{\rm spec}^{\perp}(\Sigma)$. 
To complete the proof of Theorem~C, it suffices to show that
$h_{\rm spec}^{\perp}(\Sigma)$ is strictly positive.

To this end,
let $\mathcal{L}(\Sigma)=\tilde{\mathcal{C}}^p\tilde{\mathcal{G}}\tilde{\mathcal{C}}^s$ be an arbitrary decomposition
such that ${\tilde{\mathcal{G}}}^M$ has (W)-specification for all $M\in \mathbb{N}$.
The definition of $\Sigma$ gives $w1w\in\mathcal L(\Sigma)$ for all $w\in\{2,\ldots,N\}^*$.
As in the proof of Proposition~\ref{hyp-lem2},
for each $w\in\{2,\ldots,N\}^*\setminus\{\emptyset\}$
we fix once and for all a decomposition 
\[w1w=p(w1w)\cdot c(w1w)\cdot s(w1w),\  \ p(w1w)\in\tilde{\mathcal{C}}^p, c(w1w)\in\tilde{\mathcal{G}}, s(w1w)\in\tilde{\mathcal{C}}^s.\]
Since the symbol $1$ appears in $w1w$ only once,
it appears in
only one of the three words $p(w1w)$, $c(w1w)$, $s(w1w)$.
Note that $p(w1w)$, $c(w1w)$, $s(w1w)$ can be the empty word. 
We will use the following simple estimates for $r\in\{p,s\}$:
\begin{equation}\label{esti-1}0\leq|r(w1w)|\leq |w|\ \text{if $1$ does not appear in $r(w1w)$};\end{equation} 
\begin{equation}\label{esti-2}|w|+1\leq|r(w1w)|\leq 2|w|+1\ \text{if $1$ appears in $r(w1w)$}.\end{equation}

We estimate $h(\tilde{\mathcal{C}}^p\cup\tilde{\mathcal{C}}^s)$ from below by constructing a countably infinite family of finite-to-one maps that take values in $\tilde{\mathcal{C}}^p\cup\tilde{\mathcal{C}}^s$.
We treat two cases separately.
\medskip

\noindent{\it Case 1: $\tilde{\mathcal{G}}\subset
\{2,\ldots,N\}^{\ast}$.}
For each $\ell\in\mathbb N$ 
we define a map
$\Phi_\ell\colon \{2,\ldots,N\}^{2^\ell}\mapsto
\bigcup_{n=2^\ell+1}^{2^{\ell+1}+1} (\tilde{\mathcal{C}}^p_n\cup\tilde{\mathcal{C}}^s_n)$ by
\begin{equation}\label{Phi-1}\Phi_\ell(w)=
\begin{cases}
s(w1w) & \text{if $1$ appears in $s(w1w)$,}\\
p(w1w) & \text{otherwise.}
\end{cases}\end{equation}
 For any $w\in\{2,\ldots,N\}^{2^\ell}$,
the symbol $1$ does not appear in $c(w1w)$
since $\tilde{\mathcal{G}}\subset
\{2,\ldots,N\}^{\ast}$.
By \eqref{esti-2}, 
if $1$ appears in $p(w1w)$ then  $2^\ell+1\leq|p(w1w)|\leq 2^{\ell+1}+1$, and if $1$ appears in $s(w1w)$ then $2^\ell+1\leq|s(w1w)|\leq 2^{\ell+1}+1$. Hence 
$\Phi_\ell$ is well-defined.

For any $w\in\{2,\ldots,N\}^{2^\ell}$, the symbol $1$ appears in $\Phi_\ell(w)$ by the definition \eqref{Phi-1}. 
This implies that $\Phi_\ell$ is injective, and so
\[\#\left(\bigcup_{n=2^\ell+1}^{2^{\ell+1}+1}(\tilde{\mathcal{C}}^p_n\cup\tilde{\mathcal{C}}^s_n)\right)
\ge \#\Phi_\ell(\{2,\ldots,N\}^{2^\ell}) \ge\#\{2,\ldots,N\}^{2^\ell}= (N-1)^{2^\ell}.\] 
Pick $n_\ell\in\{2^\ell+1,\ldots, 2^{\ell+1}+1\}$ such that
\[\#(\tilde{\mathcal{C}}^p_{n_\ell}\cup\tilde{\mathcal{C}}^s_{n_\ell})
\ge \frac{(N-1)^{2^\ell}}{2^\ell+1}.\]
Since $\liminf_{\ell\to\infty}n_\ell=\infty$, we have
\begin{equation}\label{ent-eqI}\begin{split}h(\tilde{\mathcal{C}}^p\cup\tilde{\mathcal{C}}^s)
&\ge \limsup_{l\rightarrow\infty}
\frac{1}{n_\ell}\log
\#(\tilde{\mathcal{C}}^p_{n_\ell}\cup\tilde{\mathcal{C}}^s_{n_\ell})\\
&\ge \limsup_{l\rightarrow\infty}
\frac{1}{n_\ell}\log
\frac{(N-1)^{\frac{n_\ell-1}{2}}}{n_\ell}=\frac{1}{2}\log (N-1).\end{split}\end{equation} 
\medskip

\noindent{\it Case 2: $\tilde{\mathcal{G}}\not\subset
\{2,\ldots,N\}^{\ast}$.} We modify the argument in Case~1. Fix $u\in\tilde{\mathcal{G}}\setminus
\{2,\ldots,N\}^{\ast}$ for which
there exist $k\geq2$, 
$u^{(0)}$, $u^{(k)}\in\{2,\ldots,N\}^{*}$, 
$u^{(1)},\ldots,u^{(k-1)}\in\{2,\ldots,N\}^*\setminus\{\emptyset\}$ such that 
\begin{equation}\label{eq-u}u=u^{(0)}1u^{(1)}1\cdots u^{(k-1)}1u^{(k)}.\end{equation}
Let $t\geq0$ be a gap size for the (W)-specification of $\tilde{\mathcal{G}}$. 
For each $\ell\in\mathbb N$ let \begin{equation}\label{al}
a_\ell=2^\ell-2t-|u^{(k)}|-1.\end{equation}
In what follows we assume
$\ell$ is large enough so that 
\begin{equation}\label{al2}\frac{2a_\ell}{3}>2^{\ell-1}.\end{equation}
We define a map 
$\Psi_\ell\colon \{2,\ldots,N\}^{a_\ell}\mapsto
\bigcup_{n=a_\ell-2^{\ell-1}}^{2a_\ell+1}(\tilde{\mathcal{C}}^p_n\cup\tilde{\mathcal{C}}^s_n)$ by
\begin{equation}\label{Phi-eq}\Psi_\ell(w)=
\begin{cases}
s(w1w) & \text{if $1$ appears in $s(w1w)$,}\\
p(w1w) & \text{otherwise.}
\end{cases}\end{equation}
Compare \eqref{Phi-eq} with \eqref{Phi-1}. To see that $\Psi_\ell$ is well-defined, let $w\in\{2,\ldots,N\}^{a_\ell}$.
By \eqref{esti-2}, if the symbol $1$ appears in $p(w1w)$ then 
$a_\ell+1\leq|p(w1w)|\leq 2a_\ell+1$, and
if 
$1$ appears in $s(w1w)$ then 
$a_\ell+1\leq|s(w1w)|\leq 2a_\ell+1$.
If $1$ appears in $c(w1w)$, then the definition \eqref{Phi-eq} gives $\Psi_\ell(w)=p(w1w)$, and
 the lemma below yields $a_\ell-2^{\ell-1}\leq|p(w1w)|\leq a_\ell$. Hence $\Psi_\ell$ is well-defined.


\begin{lemma}\label{lem-al}If $w\in\{2,\ldots,N\}^{a_\ell}$ and the symbol $1$ appears in $c(w1w)$, then $a_\ell-2^{\ell-1}\leq|p(w1w)|\leq a_\ell$.
\end{lemma}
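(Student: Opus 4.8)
The plan is to exploit the structure of the generator \eqref{generator} together with the single occurrence of the symbol $1$ in each word $w1w$ with $w\in\{2,\ldots,N\}^{a_\ell}$. First I would recall that, since $\tilde{\mathcal G}$ has (W)-specification with gap size $t$ and $u=u^{(0)}1u^{(1)}1\cdots u^{(k-1)}1u^{(k)}\in\tilde{\mathcal G}\setminus\{2,\ldots,N\}^{*}$ contains the symbol $1$, one can glue two copies of $u$ to a good word: any word of the form $c(w1w)$ sitting between two such blocks must be a legitimate word of $\Sigma$, and in particular the portion of $w1w$ lying to the left of $c(w1w)$ — namely $p(w1w)$ — is a string in $\{2,\ldots,N\}^{*}$ followed possibly by a fragment of $c(w1w)$. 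Concretely, writing $w1w=p(w1w)c(w1w)s(w1w)$, the hypothesis that $1$ appears in $c(w1w)$ forces $p(w1w)$ to be an initial segment of $w$, hence $|p(w1w)|\le|w|=a_\ell$, giving the upper bound immediately from \eqref{esti-1}.

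For the lower bound I would argue by contradiction: suppose $|p(w1w)|<a_\ell-2^{\ell-1}$. Then the good core $c(w1w)$ begins within the first copy of $w$ at position $|p(w1w)|+1\le a_\ell-2^{\ell-1}$ and must extend past the central symbol $1$, so $c(w1w)$ contains a full block of the form $w''1w'''$ where $|w''|>2^{\ell-1}$ and $w'''$ is the whole second copy of $w$ or a fragment of it, so in particular $c(w1w)$ is itself a word in which $1$ appears. Now the key structural fact about $\Sigma=\Sigma_{S,N}$ with $S=\{2^n\colon n\in\mathbb N\}$: the word lengths of the maximal $\{2,\ldots,N\}$-runs between consecutive $1$'s in any word of $\Sigma$ — except possibly the two end runs — must be powers of two in $\{2^n\colon n\in\mathbb N\}$. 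Because $\tilde{\mathcal G}$ has (W)-specification, we may concatenate $c(w1w)$ with copies of $u$ (inserting filler words of length $\le t$) on both sides to produce a word of $\Sigma$ in which the run containing $w''$ (or $w'''$) becomes an interior run, hence of length in $S$; but $w''$ has length between $2^{\ell-1}$ and $a_\ell=2^\ell-2t-|u^{(k)}|-1<2^\ell$, and by \eqref{al2} no power of two lies strictly between $2^{\ell-1}$ and that upper bound that is compatible with the arithmetic forced by inserting $u^{(0)},\ldots,u^{(k)}$ and the gaps — here I would carefully track that the concatenation forces $|w''|$ (plus a bounded correction coming from $t$, $|u^{(k)}|$, and the gap words) to equal some $2^m\in S$, which is impossible for the range of $\ell$ satisfying \eqref{al2}. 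This contradiction yields $|p(w1w)|\ge a_\ell-2^{\ell-1}$.

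The main obstacle I anticipate is the bookkeeping in the contradiction step: one has to be precise about which run of $c(w1w)$ becomes interior after the gluing, how the bounded quantities $t$, $|u^{(j)}|$ and the gap words shift the run length, and to confirm that the resulting forced value cannot be a power of two once \eqref{al2} holds — essentially because the gap between consecutive admissible lengths $2^{\ell-1}$ and $2^\ell$ is wider than the bounded perturbations $2t+|u^{(k)}|+1$ plus the length of the end-run we have control over. Everything else (the upper bound, the reduction to the single-$1$ observation) is routine given \eqref{esti-1}–\eqref{esti-2} and the definition of $\Sigma_{S,N}$.
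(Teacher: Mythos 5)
Your sketch is essentially the paper's own argument: write $c(w1w)=v1v'$, use (W)-specification to glue $u$ to the left of $c(w1w)$ so that the run containing $v$ lies between two occurrences of $1$, observe that interior run lengths in $\mathcal L(\Sigma)$ must belong to $S=\{2^n\colon n\in\mathbb N\}$, and use the choice of $a_\ell$ to trap that run length in $(2^{\ell-1},2^\ell)$, which contains no power of two; this gives $|v|\le 2^{\ell-1}$, i.e.\ $|p(w1w)|=a_\ell-|v|\ge a_\ell-2^{\ell-1}$, exactly as in the paper (stated directly rather than by contradiction). Two small points: the numerical input is \eqref{al} (giving $|v|+t+|u^{(k)}|\le 2^\ell-t-1<2^\ell$) rather than \eqref{al2}, which is only needed later for the counting estimate, and the paper treats separately the case where the specification filler word itself contains a $1$ (passing to its longest $1$-free suffix) — your ``bounded correction'' phrasing covers this but should be made explicit.
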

\begin{proof}
The upper bound follows from \eqref{esti-1}. To prove the lower bound, note that
there exist $v,v'\in\{2,\ldots,N\}^{\ast}$ such that
\[c(w1w)=v1v'.\]
Then we have $w1w=p(w1w)\cdot v1v'\cdot s(w1w)$,
and so $v$ is a subword of $w$ and
 $|p(w1w)|=|w|-|v|$. It suffices to show that $|v|\leq 2^{\ell-1}$.

By \eqref{al} we have 
\begin{equation}
\label{a}
|v|\le |w|= a_\ell<2^\ell-2t-|u^{(k)}|.
\end{equation}
Since $u,c(w1w)\in\tilde{\mathcal{G}}$,
there exists $u'\in\mathcal L(\Sigma)$  such that $|u'|\le t$ and
$uu'c(w1w)\in\mathcal{L}(\Sigma)$, and hence
$1u^{(k)}u'v1\in\mathcal{L}(\Sigma)$.
If $1$ does not appear in $u'$, 
then 
it does not appear in 
$u^{(k)}u'v$ either.
Since $\Sigma$ is the coded shift generated by $\mathcal H$ in \eqref{generator}, it follows that $|u^{(k)}u'v|\in S$.
By \eqref{a}
we have
\[|v|\leq|u^{(k)}u'v|\leq|u^{(k)}|+|u'|+|w|< 2^\ell-t\leq2^\ell.\] 
Since $S$ does not intersect $[2^{\ell-1}+1,2^\ell-1]\cap\mathbb N$, 
we obtain $|v|\leq 2^{\ell-1}$.
If $1$ appears in $u'$, then
let $t'$ denote the word length of the longest suffix of $u'$ that does not contain $1$.
Then we have $t'\leq |u'|\leq t$ and
$1i_{t'}^s(u')v1\in\mathcal{L}(\Sigma)$ and $1$ does not appear in $i_{t'}^s(u')v$.
Since $\Sigma$ is the coded shift generated by $\mathcal H$ in \eqref{generator}, this
implies $|i_{t'}^s(u')v|=t'+|v|\in S$.
Again by \eqref{a} we obtain \[|v|\leq t'+|v|\leq t+|w|<2^\ell.\]
Since $S$ does not intersect $[2^{\ell-1}+1,2^\ell-1]\cap\mathbb N$, 
we obtain $|v|\leq 2^{\ell-1}$ as required.\end{proof}

\begin{lemma}\label{not-inj}
$\Psi_\ell$ is at most $(N-1)^{2^{\ell-1}}$ to $1$.
\end{lemma}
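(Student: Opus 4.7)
The plan is to fix $z$ in the image of $\Psi_\ell$ and bound $\#\Psi_\ell^{-1}(z)$ by splitting on whether or not the symbol $1$ appears in $z$. These two cases correspond to disjoint ranges of $|z|$ (namely $|z|\geq a_\ell+1$ when $1\in z$ and $|z|\leq a_\ell$ when $1\notin z$), so it suffices to bound the preimage count in each case by $(N-1)^{2^{\ell-1}}$; the combined bound then follows from $(N-1)^{2^{\ell-1}}\geq 2$ for $N\geq 3$ and $\ell\geq 1$.

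Consider first the case $1\in z$. By the definition \eqref{Phi-eq} there are two possibilities: either $z=s(w1w)$ with the central $1$ of $w1w$ contained in the suffix, or $z=p(w1w)$ with the central $1$ contained in the prefix. In the suffix possibility, writing $n=|z|\in[a_\ell+1,2a_\ell+1]$, the suffix $s(w1w)$ occupies positions $2a_\ell+2-n,\ldots,2a_\ell+1$ of $w1w$, so it contains the entire second copy of $w$ as the last $a_\ell$ symbols of $z$; hence $w$ is uniquely determined by $z$. By the symmetric calculation in the prefix possibility, $w$ coincides with the first $a_\ell$ symbols of $z$. Each possibility therefore contributes at most one preimage, so at most two in total, which is at most $(N-1)^{2^{\ell-1}}$.

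In the case $1\notin z$, since a suffix $s(w1w)$ containing the central $1$ would force $1\in z$, only the second branch of \eqref{Phi-eq} applies and the central $1$ must lie in $c(w1w)$. Then $p(w1w)$ is a prefix of the first copy of $w$, and Lemma~\ref{lem-al} yields $|z|\geq a_\ell-2^{\ell-1}$. Thus $z$ determines the initial $|z|$ symbols of $w$, while the remaining $a_\ell-|z|\leq 2^{\ell-1}$ symbols each range freely over $\{2,\ldots,N\}$, giving at most $(N-1)^{2^{\ell-1}}$ preimages. Combining the two cases yields the claim. The only step that needs care is the position bookkeeping when $1\in z$; the inequality $|z|\geq a_\ell+1$ guaranteed by \eqref{esti-2} is precisely what allows the relevant copy of $w$ to sit entirely inside the recovered suffix or prefix of $z$.
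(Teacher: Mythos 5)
Your proof is correct and follows essentially the same route as the paper: the heart of both arguments is that when the symbol $1$ falls in $c(w1w)$, Lemma~\ref{lem-al} forces $|p(w1w)|\geq a_\ell-2^{\ell-1}$, so at most $2^{\ell-1}$ symbols of $w$ are left undetermined, while in all other configurations \eqref{esti-1}--\eqref{esti-2} show $w$ is recovered outright from the image. The only cosmetic difference is that you count fibers over a fixed image $z$ (using the crude bound $2\leq(N-1)^{2^{\ell-1}}$ when $1$ appears in $z$), whereas the paper runs a pairwise argument showing two distinct preimages must both have $1$ in the core and share a long common prefix.
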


\begin{proof}Let $w,w'\in\{2,\ldots,N\}^{a_\ell}$, $w\neq w'$ and suppose $\Psi_\ell(w)=\Psi_\ell(w')$.
 The definition \eqref{Phi-eq} implies that the symbol $1$ appears in neither $s(w1w)$ nor $s(w'1w')$, and $\Psi_\ell(w)=p(w1w)$, $\Psi_\ell(w')=p(w'1w')$.

If $1$ does not appear in
$c(w1w)$ 
and appears in
$c(w'1w')$, then by \eqref{esti-1}, \eqref{esti-2} we get $|p(w1w)|\geq a_\ell+1$ and 
$|p(w'1w')|\leq a_\ell$, and so $\Psi_\ell(w)\neq \Psi_\ell(w')$, a contradiction.
If $1$ appears in $c(w1w)$ 
and does not appear in
$c(w'1w')$, then
we also obtain a contradiction.
If $1$ appears in neither
$c(w1w)$ 
nor
$c(w'1w')$,
then it appears in both $p(w1w)$ and $p(w'1w')$, and so $w=w'$, a contradiction.
Therefore, $1$ appears in both
$c(w1w)$ 
and 
$c(w'1w')$. By Lemma~\ref{lem-al}
there exists $k\in\{a_\ell-2^{\ell-1},\ldots, a_\ell\}$ such that 
$i^p_{k }(w)=i^p_{k }(w')$.
Therefore, $\Psi_\ell$ is at most $(N-1)^{2^{\ell-1}}$ to $1$.\end{proof}
By Lemma~\ref{not-inj} we have
\[\begin{split}\#\left(\bigcup_{n=a_\ell-2^{\ell-1}}^{2a_\ell+1}(\tilde{\mathcal{C}}^p_n\cup\tilde{\mathcal{C}}^s_n)\right)&\ge\#\Psi_\ell(\{2,\ldots,N\}^{a_\ell})\\
&\geq \#\{2,\ldots,N\}^{a_\ell}(N-1)^{-2^{\ell-1}}=   (N-1)^{a_\ell-2^{\ell-1}}.\end{split}\]
Hence there exists $n_\ell\in\{a_\ell-2^{\ell-1},\ldots, 2a_{\ell}+1\}$
such that \[\#(\tilde{\mathcal{C}}^p_{n_\ell}\cup
\tilde{\mathcal{C}}^s_{n_\ell})\geq
\frac{(N-1)^{a_\ell-2^{\ell-1}} }{
a_\ell+2+2^{\ell-1} }\ge \frac{(N-1)^{
\frac{a_\ell}{3}  } }{
5a_\ell/3+2 }\geq\frac{(N-1)^{\frac{n_\ell-1}{6}}}{5n_\ell+2}.\]
For the last inequality we have used \eqref{al2}. 
Since $\liminf_{\ell\to\infty}n_\ell=\infty$, we have
\begin{equation}\label{ent-eqII}\begin{split}h(\tilde{\mathcal{C}}^p\cup\tilde{\mathcal{C}}^s)
&\ge \limsup_{l\rightarrow\infty}
\frac{1}{n_\ell}\log
\#(\tilde{\mathcal{C}}^p_{n_\ell}\cup\tilde{\mathcal{C}}^s_{n_\ell})\\
&\ge\limsup_{\ell\to\infty}\frac{1}{n_\ell}\log\frac{(N-1)^{\frac{n_\ell-1}{6}}}{5n_\ell+2}\geq\frac{1}{6}\log (N-1).\end{split}\end{equation} Since $\mathcal{L}(\Sigma)=\tilde{\mathcal{C}}^p\tilde{\mathcal{G}}\tilde{\mathcal{C}}^s$ is an arbitrary decomposition
such that ${\tilde{\mathcal{G}}}^M$ has (W)-specification for all $M\in \mathbb{N}$,
from \eqref{ent-eqI} and \eqref{ent-eqII} we obtain
$h_{\rm spec}^{\perp}(\Sigma)\geq(1/6)\log (N-1).$
This completes 
the proof of Theorem~C.\qed
\medskip


\subsection*{Acknowledgments}

MS was supported by the JSPS KAKENHI 21K13816. HT was supported by the JSPS KAKENHI 23K20220.
KY was supported by the JSPS KAKENHI
21K03321.


      \bibliographystyle{amsplain}

\begin{thebibliography}{10}



\bibitem{BLL} A. T. Baraviera, R. Leplaideur and A. O. Lopes. {\it Ergodic optimization, zero temperature limits and the max-plus algebra.} IMPA Mathematical Publications, 29th Brazilian Mathematics Colloquium, Instituto Nacional de Matem\'atica Pura e Aplicada (IMPA), Rio de Janeiro, 2013. ii+108 pp.



\bibitem{Boc18} J. Bochi. Ergodic optimization of Birkhoff averages and Lyapunov exponents. {\it Proceedings of the International Congress of Mathematicians}, Rio de Janeiro, 2018. Vol. III. Invited lectures, 1825--1846, World Sci. Publ., Hackensack, NJ, 2018.



\bibitem{Bou00} T. Bousch. Le poisson n'a pas d'ar\^etes. {\it Ann. Inst. Poincar\'e Probab. Statist.} {\bf 36} (2000), 489--508. 

 \bibitem{Bou01} T. Bousch. La condition de Walters, {\it Ann. Sci. de l'\'Ecole normale sup.} {\bf 34} (2001), 5988--6017.

 \bibitem{BouJen02} T. Bousch and O. Jenkinson.
 Cohomology class of dynamically non-negative $C^k$ functions. {\it Invent. Math.} {\bf 148}  (2002), 207--217.


\bibitem{Bow71} R. Bowen. Entropy for group endomorphisms and homogeneous spaces, {\it Trans. Amer. Math. Soc.} {\bf 153} (1971), 401--414.

 

   \bibitem{Bre08} J. Br\'emont.
   Entropy and maximizing measures of generic continuous functions. {\it C. R. Math. Acad. Sci.} {\bf 346} (2008), 199--201.


\bibitem{Buz97} J. Buzzi. Specification on the interval.
{\it Trans. Amer. Math. Soc.} {\bf 349} (1997), no.7, 2737--2754.

\bibitem{Buz05} J. Buzzi. Subshifts of quasi-finite type. {\it Invent. Math.} {\bf 159} (2005), 369--406. 











\bibitem{C18}
V. Climenhaga.
Specification and towers in shift spaces. {\it Commun. Math. Phys.} {\bf 364} (2018), 441--504.



\bibitem{CT12} V. Climenhaga and D. J. Thompson.  Intrinsic ergodicity beyond specification: $\beta$-shifts, $S$-gap shifts, and their factors. {\it Israel J. Math.} {\bf 192} (2012), 785--817.

\bibitem{CT13} V. Climenhaga and D. J. Thompson. Equilibrium states beyond specification and the Bowen property. {\it J. Lond. Math. Soc.} {\bf 87} (2013), 401--427. 

\bibitem{CT14} V. Climenhaga and D. J. Thompson.  Intrinsic ergodicity via obstruction entropies. {\it Ergodic Theory Dyn. Syst.} {\bf 34} (2014), 1816--1831.

\bibitem{CTY17} V. Climenhaga, D. J. Thompson and K. Yamamoto. Large deviations for systems with non-uniform structure. {\it Trans. Amer. Math. Soc.} {\bf 369} (2017), 4167--4192.




\bibitem{CLT01} G. Contreras, A. O. Lopes and P. Thieullen. Lyapunov minimizing measures for expanding maps of the circle. {\it Ergod. Th. $\&$ Dynam. Sys.} {\bf 21} (2001), 1379--1409.


\bibitem{H2} F. Hofbauer. On intrinsic ergodicity of piecewise monotonic transformations with positive entropy
II. {\it Israel J. Math.} {\bf 38} (1981), 107--115.

\bibitem{H}F. Hofbauer. Piecewise invertible dynamical systems. {\it Probab. Theory Related Fields} {\bf 72} (1986), 359--386.

\bibitem{Hof87} F. Hofbauer. Generic properties of invariant measures for simple piecewise monotonic transformations. {\it Israel J. Math.} {\bf 59} (1987), 64--80.

\bibitem{Hof88} F. Hofbauer. Generic properties of invariant measures for continuous piecewise monotonic transformations. {\it Monatsh. Math.} {\bf 106} (1988), 301--312




  



\bibitem{HR98} F. Hofbauer and P. Raith. Density of periodic orbit measures for transformations on the interval with two monotonic pieces. Dedicated to the memory of Wies{\l}aw Szlenk. {\it Fund. Math.} {\bf 157} (1998), 221--234.



\bibitem{HHY17} H. Hu, X. Li and Y. Yu.
A note on $(-\beta)$-shifts with the specification property. {\it Publ. Math. Debr.}
{\bf 91} (2017), 123--131. 

  \bibitem{HT1} B. Hunt and E. Ott. Optimal periodic orbits of chaotic systems. {\it Phys. Rev. Lett.} {\bf 76} 2254

   \bibitem{HT2} B. Hunt and E. Ott. Optimal periodic orbits of chaotic systems occur at low period. {\it Phys. Rev. E} {\bf 54} 328

  




\bibitem{Isr79} R. B. Israel. {\it Convexity in the theory of lattice gases (Princeton Series in Physics)} Princeton, NJ: Princeton University Press  (1979)

\bibitem{Jen06} O. Jenkinson. Ergodic optimization. {\it Discrete Continuous Dyn. Syst. A} {\bf 15} (2006), 197--224. 

\bibitem{Jen19} O. Jenkinson. Ergodic optimization in dynamical systems. {\it Ergod. Th. $\&$ Dynam. Sys.} {\bf 39} (2019), 2593--2618. 

\bibitem{Kri74} W. Krieger. 
 On the uniqueness of the equilibrium state. {\it Math. Syst. Theory} {\bf 8} (1974/75),  97--104. 
 
\bibitem{KSW23} T. Kucherenko, M. Schmoll and C. Wolf. Ergodic theory on coded shift spaces. arXiv:2310.18855

\bibitem{KLO16} D. Kwietniak, M. {\L}\c{a}cka and P. Oprocha. A panorama of specification-like properties and their consequences.
{\it Dynamics and numbers}, 155--186, {\it Contemp. Math.} {\bf 669} (2016),
Amer. Math. Soc., Providence, RI.

  
   \bibitem{LM} D. Lind and B. Marcus. {\it An introduction to symbolic dynamics and coding}. Second edition. Cambridge Mathematical Library. Cambridge University Press, Cambridge, 2021. 
 











  
    


\bibitem{Mor09} I. D. Morris. 
The Ma\~n\'e-Conze-Guivarc'h lemma for intermittent maps of the circle.
{\it Ergod. Th. $\&$ Dynam. Sys.}
 {\bf 29} (2009), 1603--1611. 

\bibitem{Mor10} I. D. Morris. Ergodic optimization for generic continuous functions. {\it Discrete Contin. Dyn. Syst.} {\bf 27} (2010), 383-388.



\bibitem{OS24} M. Oguchi and M. Shinoda. Hausdorff dimension of the parameters for $(\alpha,\beta)$-transformations with the specification property. arXiv:2403.14230
  

\bibitem{Sch97} J. Schmeling. Symbolic dynamics for $\beta$-shifts
and self-normal numbers. {\it Ergod. Th. $\&$ Dynam. Sys.}
{\bf 17} (1997), 675--694.
 







\bibitem{Shi18} M. Shinoda. Uncountably many maximizing measures for a dense set of continuous functions. {\it Nonlinearity} {\bf 31} (2018),  2192--2200.



\bibitem{ShiTak20} M. Shinoda and H. Takahasi. Lyapunov optimization for non-generic one-dimensional expanding Markov maps. {\it Ergod. Th. $\&$ Dynam. Sys.} {\bf 40} (2020), 2571--2592.

\bibitem{ShiYam23} M. Shinoda and K. Yamamoto. Density of periodic measures and large deviation principle for generalised mod one transformations. {\it Nonlinearity} {\bf 37} (2023), 025003 



\bibitem{Sig70} K. Sigmund. Generic properties of invariant measures for Axiom A diffeomorphisms. {\it Invent. math.} {\bf 11} (1970), 99--109.

\bibitem{Sig76} K. Sigmund. On the distribution of periodic points for $\beta$-shifts. {\it Monatsh. Math.} {\bf 82} (1976), 247--252.

\bibitem{Sig77} K. Sigmund. On the connectedness of ergodic systems. {\it Manuscripta Math}. {\bf 22} (1977), 27--32.





 



\bibitem{Wal82} P. Walters.  {\it An Introduction to Ergodic Theory (Graduate Texts in Mathematics, 79)}.   Springer, New York, 1982.


\bibitem{Y20} K. Yamamoto. On the density of periodic measures for piecewise monotonic maps
and their coding spaces. {\it Tsukuba J. Math.} {\bf 44} (2020), 309--324.



    \bibitem{YH99} G. Yuan and B. Hunt. Optimal orbits of hyperbolic systems. {\it Nonlinearity} {\bf 12} (1999), 1207--1224.

  
  
  \end{thebibliography}

\end{document}